\documentclass[10pt, A4paper]{amsart}
\usepackage[a4paper, total={14.5cm, 24cm}]{geometry}

\usepackage{amssymb,bbm,amsmath}

\usepackage{graphicx}

\newcommand{\rotatesupseteq}{\rotatebox[origin=c]{90}{$\supseteq$}}

\usepackage{amsthm}
\usepackage{enumitem}
\usepackage{dsfont}
\usepackage{color}
\usepackage[unicode]{hyperref}
\usepackage[utf8]{inputenc}
\usepackage[T1]{fontenc}
\usepackage{geometry}
\usepackage{todonotes}
\usepackage{lmodern}
\usepackage{anyfontsize}
\usepackage{cleveref}
\usepackage{comment}
\usepackage{makecell}

\theoremstyle{plain}
\newtheorem{theorem}{Theorem}[section]
\newtheorem{proposition}[theorem]{Proposition}
\newtheorem{lemma}[theorem]{Lemma}
\newtheorem{corollary}[theorem]{Corollary}

\theoremstyle{definition}
\newtheorem{definition}[theorem]{Definition}
\newtheorem{remark}[theorem]{Remark}

\newtheorem{example}[theorem]{Example}

\newcommand{\R}{\mathbb{R}}
\newcommand{\N}{\mathbb{N}}

\newcommand{\cW}{\mathcal{W}}

\newcommand{\id}{{\rm id}}
\newcommand{\proj}{{\rm proj}}

\newcommand{\KRmonotone}{{triangular increasing}}
\newcommand{\KRregular}{{triangular regular}}

\newcommand{\KR}{\mathcal{KR}}
\newcommand{\AW}{\mathcal{AW}}
\newcommand{\W}{\mathcal{W}}

\newcommand{\cplbc}{\Pi_{\rm bc}}
\newcommand{\cpl}{\Pi}

\renewcommand{\P}{\mathcal P}

\newcommand{\kr}{\text{kr}}

\begin{document}

\title{The Knothe--Rosenblatt distance and its induced topology}

\author{M.\ Beiglböck, G.\ Pammer, and A.\ Posch}\thanks{Financial support through FWF-projects Y0782 and P35197 is gratefully acknowledged.}
\maketitle

\begin{abstract}
A basic and natural coupling between two probabilities on $\R^N$ is given by the Knothe--Rosenblatt coupling. It represents a  multiperiod extension of the quantile coupling and  is simple to calculate numerically.

We consider the  distance on $\mathcal P (\R^N)$ that is induced by considering the transport costs associated to the Knothe--Rosenblatt coupling. 
We show that this Knothe--Rosenblatt distance metrizes the \emph{adapted weak topology} which is a stochastic process version of the usual weak topology and plays an important role, e.g.\ concerning questions on stability of stochastic control and probabilistic operations. 
We also establish that the Knothe--Rosenblatt distance is a geodesic distance, give a Skorokhod representation theorem for the adapted weak topology, and provide multi-dimensional versions of our results.

\medskip

\noindent{\it keywords: Knothe--Rosenblatt coupling, triangular transformations, adapted weak topology, barycenter of probabilities}
\end{abstract} 
\section{Introduction}

We fix $N\in \N$ and write $\lambda$ for the Lebesgue measure on $(0,1)$. For $p\in [1,\infty)$ we equip  $L_p(\lambda^N)=L_p(\lambda^N; \R^N)$  with the usual norm and the set $\mathcal P _p(\R^N)$ of probabilities on $\R^N$ with finite $p$-th moments with $p$-Wasserstein distance inducing $p$-weak convergence. 
For $p=0$ we equip $L_0(\lambda^N)$ with  $\|f-g\|_0=\int |f-g|_0 \, d\lambda^N$ where $|\cdot|_0 := |\cdot|\wedge 1$ and consider usual weak convergence on the set  $\mathcal P_0(\R^N)$ of  probabilities on $\R^N$.

\subsection{Quantile processes}

The Knothe--Rosenblatt coupling is an $N$-step extension of the quantile coupling which we first recall:
For every probability $\mu $ on $\R$ there is an a.s.\ unique increasing \emph{quantile function} $Q=Q^\mu\colon(0,1)\to \R$ such that $Q^\mu_\#(\lambda)=\mu$. The \emph{quantile coupling} or  (\emph{co-monotone coupling}) of  probabilities $\mu, \nu$ is given by $(Q^\mu,Q^\nu)_\#(\lambda)$.
In multiple steps, we are interested in functions that are component wise increasing: 
\begin{definition} \label{def:quantile_process}
A map  $T=(T_k)_{k=1}^N\colon(0,1)^N\to \R^N$ is \emph{adapted} or \emph{triangular}  if each $T_k $ depends only on the first $k$ variables, i.e.\ $T_k(x_1,\ldots, x_N)=T_k(x_1,\ldots, x_k)$, (cf.\ \cite{BoKoMe05}).
A triangular $Q=(Q_k)_{k=1}^N: (0,1)^N\to \R^N$  is a \emph{quantile process} if for all $k\leq m \leq N, u_1, \ldots, u_m, u_k' \in (0,1) $ with $u_k \leq u'_k$ 
\begin{align}\label{CanonicalIncreasingI} 
 Q_k(u_1, \ldots,  u_k) \ \  & \leq \ \   Q_k(u_1, \ldots, u_k') \tag{\text{inc}} \\
 Q_k(u_1, \ldots, u_k) = Q_k(u_1, \ldots, u_k')  &    \Rightarrow    Q_m(u_1, \ldots, u_k, \ldots, u_m )= Q_m(u_1, \ldots, u_k', \ldots, u_m). \tag{\text{con}}  \label{CanonicalIncreasingII}
\end{align}

\end{definition}
\begin{lemma}\label{lem:Qmu}
    For $\mu\in \P_0(\R^N)$ there is an a.s.\ unique quantile process $Q^\mu$ with $Q^\mu_{\#}(\lambda^N)=\mu$.  
\end{lemma}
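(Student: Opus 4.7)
The plan is to construct $Q^\mu$ inductively via a regular conditional disintegration of $\mu$ and verify the two conditions \eqref{CanonicalIncreasingI}, \eqref{CanonicalIncreasingII} along the way; the a.s.\ uniqueness then reduces, layer by layer, to the classical one-dimensional uniqueness of quantile functions.

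Concretely, I would fix a disintegration
\[
\mu(dx_1,\ldots,dx_N)=\mu_1(dx_1)\,K_2(x_1,dx_2)\cdots K_N(x_1,\ldots,x_{N-1},dx_N)
\]
into Borel probability kernels and set $Q_1(u_1):=Q^{\mu_1}(u_1)$ and, recursively,
\[
Q_k(u_1,\ldots,u_k):=Q^{K_k(Q_1(u_1),\ldots,Q_{k-1}(u_1,\ldots,u_{k-1}),\,\cdot\,)}(u_k).
\]
A straightforward Fubini induction gives $Q^\mu_\#(\lambda^N)=\mu$. Property \eqref{CanonicalIncreasingI} is immediate because for frozen $(u_1,\ldots,u_{k-1})$ the function $u_k\mapsto Q_k$ is a classical (hence increasing) quantile. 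Property \eqref{CanonicalIncreasingII} is built in by construction: for $m>k$ the value $Q_m(u_1,\ldots,u_m)$ depends on $u_k$ \emph{only} through $Q_k(u_1,\ldots,u_k)$, since each subsequent kernel is evaluated at the tuple $(Q_1(u_1),\ldots,Q_k(u_1,\ldots,u_k))$.

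For uniqueness, suppose $\tilde Q$ is another quantile process pushing $\lambda^N$ to $\mu$. By triangularity $\tilde Q_1$ depends only on $u_1$, is increasing by \eqref{CanonicalIncreasingI}, and pushes $\lambda$ to $\mu_1$; the classical a.s.\ uniqueness of the one-dimensional quantile gives $\tilde Q_1=Q_1$ $\lambda$-a.e. Assume inductively $\tilde Q_j=Q_j$ $\lambda^j$-a.s.\ for $j<k$. For $\lambda^{k-1}$-a.e.\ $(u_1,\ldots,u_{k-1})$, both $u_k\mapsto Q_k(u_1,\ldots,u_k)$ and $u_k\mapsto\tilde Q_k(u_1,\ldots,u_k)$ are increasing, and a disintegration argument combined with \eqref{CanonicalIncreasingII} (applied to both $Q$ and $\tilde Q$ to factor correctly through flat pieces of the preceding layers) identifies them as quantile functions of the same one-dimensional conditional; classical uniqueness then yields equality $\lambda$-a.e., closing the induction.

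The main obstacle is the uniqueness step in the presence of flat pieces of the $Q_j$ with $j<k$: on such a flat piece the conditioning tuple $(Q_1,\ldots,Q_{k-1})$ is constant while the underlying $(u_1,\ldots,u_{k-1})$ ranges over a set of positive $\lambda^{k-1}$-measure, so one must check that both $Q_k$ and $\tilde Q_k$ genuinely depend on $(u_1,\ldots,u_{k-1})$ only through $(Q_1,\ldots,Q_{k-1})$. This is exactly the content of \eqref{CanonicalIncreasingII}; once it is leveraged, the problem collapses to the one-dimensional case. A small auxiliary point is to make the recursive construction measurable, which is handled by choosing Borel-measurable versions of the kernels $K_k$ and of the map $\nu\mapsto Q^\nu$.
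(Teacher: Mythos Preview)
Your existence construction coincides with the paper's (Lemma~\ref{lem:Q.defs}(a)): iterated one-dimensional quantiles along a disintegration, with \eqref{CanonicalIncreasingI} and \eqref{CanonicalIncreasingII} immediate from the recursive definition. For uniqueness your inductive reduction to one-dimensional quantile uniqueness is more self-contained than the paper's route---the paper first shows that any quantile process $\tilde Q$ induces a bicausal coupling $(\id,\tilde Q)_\#\lambda^N\in\cplbc(\lambda^N,\mu)$ and then invokes \cite[Proposition~5.9]{BaBeLiZa17}---but both approaches hinge on the same technical point, and here your sketch has a genuine gap.

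You correctly note that \eqref{CanonicalIncreasingII} forces $\tilde Q_k(u_{1:k})$ to factor through $(\tilde Q_{1:k-1}(u_{1:k-1}),u_k)$, but this factoring is only \emph{pointwise}. To identify $u_k\mapsto\tilde Q_k(u_{1:k-1},u_k)$ with the quantile of $K_k(\tilde Q_{1:k-1}(u_{1:k-1}),\cdot)$ for $\lambda^{k-1}$-a.e.\ $u_{1:k-1}$, you need the conditional law of $\tilde Q_k(U)$ given $U_{1:k-1}$ to agree with its conditional law given $\tilde Q_{1:k-1}(U)$; equivalently, that $\tilde Q_k(U)$ is $\sigma(\tilde Q_{1:k-1}(U),U_k)$-measurable. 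A Borel map that is constant on the fibres of another Borel map is \emph{not} automatically measurable with respect to the $\sigma$-algebra generated by the latter (the range of $\tilde Q_{1:k-1}$ is in general only analytic), so this step requires real work. The paper isolates exactly this issue as Lemma~\ref{lem:consistency.measurability} and resolves it via the Jankow--von~Neumann uniformization theorem, explicitly flagging that ``we have to use some machinery of descriptive set theory in the general case.'' Your closing remark about measurability concerns only the \emph{construction} of $Q^\mu$ (Borel kernels, Borel $\nu\mapsto Q^\nu$) and does not touch the factoring of an arbitrary competitor $\tilde Q$. Once you supply this step---either via Jankow--von~Neumann as the paper does, or by exploiting the triangular monotone structure of $\tilde Q_{1:k-1}$ to build an explicit Borel right-inverse---your induction closes.
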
 

In essence this follows by disintegrating $\mu$ into a product of successive kernels
\begin{align}\label{eq:kernel1}
\mu(du_1, \ldots, du_N)= \mu_1(du_1)\mu_2^{u_1}(du_1) \ldots \mu_{N}^{u_1...u_{N-1}}(du_N)
\end{align}
and taking the respective quantile functions as components of $Q^\mu$. 
 We call $\mu$ \emph{triangular regular} if all $\mu_k^{u_1...u_{k-1}}, k\leq N$, are atom free. 
 In this case the consistency condition \eqref{CanonicalIncreasingII} can be omitted. 
Every absolutely continuous measure is triangular regular.
We refer to \Cref{lem:Q.defs} for details.

\subsection{Knothe--Rosenblatt distance and adapted weak topology}

\begin{definition}\label{def:kr} For   $\mu, \nu\in \mathcal P_p(\R^N)$ 
    the \emph{Knothe--Rosenblatt coupling} is  given by  
\begin{align}
 \kr_{\mu,\nu}:=(Q^\mu, Q^\nu)_{\#}(\lambda^N).
\end{align}
For  $p\in \{0\} \cup [1,\infty)$ the $p$-\emph{Knothe--Rosenblatt distance} is defined by 
\begin{align}\label{KRDDef} 
\KR_p^p(\mu,\nu):= & \textstyle  \int \|x-y\|^p_p\, d \kr_{\mu,\nu}(x,y)= \int |Q^\mu - Q^\nu|_p^p\, d\lambda^N,
\end{align}
where we abuse notation for $p=0$ and convene that  $\|x-y\|^0_0:= |x-y|\wedge 1$.  
\end{definition}

We formulate our first contribution:
\begin{theorem}\label{thm:Main}
   For $p\in \{0\}\cup [1, \infty)$ the  Knothe--Rosenblatt distance $\KR_p$ metrizes the ($p$-) adapted weak topology on $\P_p(\R^N)$. 
\end{theorem}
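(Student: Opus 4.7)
The plan is to establish \Cref{thm:Main} by induction on $N$, exploiting the recursive structure of the Knothe--Rosenblatt construction via disintegration along the first coordinate. For the base case $N=1$, a quantile process reduces to the classical quantile function, $\KR_p$ coincides with the $p$-Wasserstein distance on $\P_p(\R)$ (respectively with the usual metric for weak convergence when $p=0$), and the adapted weak topology on $\R$ reduces to $p$-weak convergence; so the statement is classical.

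Before the induction I would confirm that $\KR_p$ is genuinely a metric. Since
\[
\KR_p(\mu,\nu) = \|Q^\mu - Q^\nu\|_{L_p(\lambda^N)},
\]
symmetry and the triangle inequality are inherited from $L_p$ (using subadditivity of $|\cdot|_0$ in the case $p=0$). Non-degeneracy follows from \Cref{lem:Qmu}: if $Q^\mu = Q^\nu$ $\lambda^N$-a.s., then $\mu = Q^\mu_{\#}(\lambda^N) = Q^\nu_{\#}(\lambda^N) = \nu$.

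For the inductive step I would use the natural decomposition
\[
\KR_p^p(\mu,\nu) = \int_0^1 |Q_1^\mu(u_1) - Q_1^\nu(u_1)|^p\, du_1 + \int_0^1 \KR_p^p(\tilde\mu^{u_1}, \tilde\nu^{u_1})\, du_1,
\]
where $\tilde\mu^{u_1}$ is the law on $\R^{N-1}$ obtained from the conditional kernels $\mu_2^{Q_1^\mu(u_1)} \otimes \cdots \otimes \mu_N^{Q_1^\mu(u_1),\dots}$ and similarly for $\nu$. The forward direction is then straightforward: $\KR_p(\mu_n,\mu)\to 0$ forces $L_p$-convergence of $Q^{\mu_n}$ to $Q^\mu$, hence (along a subsequence) $\lambda^N$-a.e.\ convergence. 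This yields $p$-weak convergence of the first marginals and $\lambda$-a.e.\ $\KR_p$-convergence of the $(N-1)$-dimensional conditional laws $\tilde\mu_n^{u_1}$; applying the induction hypothesis fiberwise and invoking Fubini against nested bounded-continuous adapted test functionals upgrades this to adapted weak convergence $\mu_n \to \mu$.

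The backward direction is the main obstacle. Given adapted weak convergence $\mu_n\to\mu$, the first marginals converge $p$-weakly, so $Q_1^{\mu_n}\to Q_1^\mu$ in $L_p(\lambda)$ by the $N=1$ case. The delicate point is transferring the adapted convergence into fiberwise convergence of the conditional kernels $\tilde\mu_n^{u_1}$ for $\lambda$-a.e.\ $u_1$, because the disintegration need not depend continuously on the measure once the first marginal carries atoms. Here I would exploit the consistency property \eqref{CanonicalIncreasingII}, first reducing to the triangular regular case (which is dense and where the disintegration map is well-behaved) via an approximation of $\mu$ and $\mu_n$ by slight perturbations, then using that adapted weak convergence encodes, by definition, the continuity of \emph{all} nested conditional integrals of bounded continuous functions. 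The induction hypothesis then yields $\KR_p(\tilde\mu_n^{u_1}, \tilde\mu^{u_1})\to 0$ fiberwise; combining this with uniform integrability of $|Q^{\mu_n}|^p$, which follows from $p$-weak convergence of the marginals of $\mu_n$, promotes $\lambda^N$-a.e.\ convergence of $Q^{\mu_n}$ to $L_p$-convergence, and hence gives $\KR_p(\mu_n,\mu)\to 0$.
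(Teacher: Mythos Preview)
Your forward direction is more laborious than necessary: since the Knothe--Rosenblatt coupling is bicausal (see \eqref{eq:KRDisDef} and \Cref{lem:KR.bicausal}), one has $\AW_p\le\KR_p$ outright, so $\KR_p(\mu_n,\mu)\to 0$ immediately gives $\AW_p(\mu_n,\mu)\to 0$. No induction or test functionals are needed there.

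The backward direction, however, has a genuine gap. You correctly identify the delicate step---passing from adapted weak convergence of $\mu_n\to\mu$ to $\lambda$-a.e.\ convergence of the conditional laws $\tilde\mu_n^{u_1}\to\tilde\mu^{u_1}$ in $\AW_p$---but your proposed resolution does not close it. Reducing to the triangular regular case by approximation merely shifts the problem: even when $\mu$ is triangular regular, the disintegration $x_1\mapsto K^\mu_2(x_1)$ need not be continuous, so knowing $Q_1^{\mu_n}(u_1)\to Q_1^\mu(u_1)$ a.e.\ does not by itself force $K^{\mu_n}_2(Q_1^{\mu_n}(u_1))\to K^\mu_2(Q_1^\mu(u_1))$. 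And the phrase ``adapted weak convergence encodes the continuity of all nested conditional integrals'' only gives you convergence of \emph{averages} $\int f(x_1,K^{\mu_n}_2(x_1))\,d\mu_n^1(x_1)$, not pointwise convergence in $u_1$. What is missing is a mechanism to upgrade weak convergence of the graph measures $(\id,K^{\mu_n}_2)_\#\mu_n^1\to(\id,K^\mu_2)_\#\mu^1$ (which \emph{is} encoded in adapted weak convergence) to convergence of the underlying functions. This is nontrivial and is exactly what the paper supplies via Eder's modulus-of-continuity lemma \cite[Lemma~2.7]{Ed19}: the paper proves directly that
\[
\KR_p(\mu,\nu)\le\sum_{k=1}^N f_\mu^k\bigl(\AW_p(\mu,\nu)\bigr)
\]
with $f_\mu^k$ continuous and vanishing at $0$, built from the moduli $\omega_\mu^k$ of the kernel maps $x_{1:k-1}\mapsto K_k^\mu(x_{1:k-1})$. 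The paper's alternative proof in the appendix, which is closer in spirit to your induction, still relies on a gluing lemma \cite[Theorem~4.1]{Ed19} and a Meyer--Zheng-type result (\Cref{lem:MeyerZheng}) to make precisely this step rigorous. Your sketch invokes neither, and without one of these ingredients the argument does not go through.
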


The \emph{adapted weak topology} refines the usual weak topology on $\P_p(\R^N)$ with the purpose of taking the flow of information into account. It has been introduced independently by different groups of authors: by Aldous \cite{Al81} based on the prediction process, by Hellwig \cite{He96} using conditional distributions, by Hoover--Keisler \cite{HoKe84} using adapted functions, and by Pflug--Pichler \cite{PfPi12, PfPi14}, Rüschendorf \cite{Ru85}, Lasalle \cite{La18}, and Nielson--Sun \cite{NiSu20} using adapted variants of the Wasserstein distance. Importantly all of these constructions yield the same topology in finite discrete time (\cite{BaBaBeEd19b}).
We refer to \cite{Pa22} for a recent survey and will give a precise definition of the adapted Wasserstein distance in \Cref{ssec:AW} below, when we compare the Knothe--Rosenblatt distance and the adapted Wasserstein distance.

\begin{remark}
It is well known (and not hard to see) that convergence in $p$-Wasserstein distance is equivalent to weak convergence plus convergence of $p$-th moments. Likewise 
convergence in adapted $p$-Wasserstein distance is equivalent to adapted weak convergence plus convergence of $p$-th moments (see e.g.\ \cite{BaBaBeEd19b}).  
\end{remark}

\subsection{Isometric embedding and adapted Skorokhod representation theorem}

It follows from \eqref{KRDDef}  that
\begin{align}\label{eq:embedding} Q: (\P_p(\R^N), \KR_p) \to (L^p(\lambda^N), \|. \|_p) , \quad   \mu \mapsto Q^ \mu
\end{align} 
is an isometric embedding for $p\in \{0\}\cup [1, \infty)$. We can thus rephrase Theorem \ref{thm:Main} as follows: 
\begin{corollary}[adapted Skorokhod representation theorem] \label{cor:Skorokhod}
    Let $p\in \{0\} \cup [1,\infty)$ and $\mu_n, \mu \in \P_p (\R^N), n \in \N$. Then $\mu_n \to \mu$  in the $p$-adapted weak topology if and only if  $Q^{\mu_n} \to Q^\mu$ converges  in $\|.\|_p$.
\end{corollary}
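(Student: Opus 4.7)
The plan is to read off the corollary as an immediate consequence of Theorem~\ref{thm:Main} together with the isometry observation \eqref{eq:embedding}; there is essentially no additional work beyond stringing together the right equivalences.

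First I would record the isometry explicitly: by the second equality in \eqref{KRDDef}, for every $\mu,\nu\in\P_p(\R^N)$ one has
\[
\|Q^{\mu}-Q^{\nu}\|_p^p \;=\; \int |Q^{\mu}-Q^{\nu}|_p^p\, d\lambda^N \;=\; \KR_p^p(\mu,\nu),
\]
with the corresponding identity for $p=0$ (where $\|\cdot\|_0$ is interpreted as in the introduction). In particular the map $\mu\mapsto Q^\mu$ is an isometric embedding of $(\P_p(\R^N),\KR_p)$ into $(L^p(\lambda^N),\|\cdot\|_p)$, as already noted. Hence the statement ``$Q^{\mu_n}\to Q^{\mu}$ in $\|\cdot\|_p$'' is by definition equivalent to ``$\KR_p(\mu_n,\mu)\to 0$''.

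The second and final step is to quote Theorem~\ref{thm:Main}: since $\KR_p$ metrizes the $p$-adapted weak topology on $\P_p(\R^N)$, convergence $\KR_p(\mu_n,\mu)\to 0$ is equivalent to $\mu_n\to\mu$ in that topology. Chaining the two equivalences yields the claim in both directions.

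There is no real obstacle here; the only care required is the case $p=0$, where one must remember that $\|\cdot\|_0$ is a (bounded, translation-invariant) metric rather than a norm, so that the ``isometric embedding'' in \eqref{eq:embedding} is understood in the metric sense. Once this is noted, the same two-line argument applies uniformly over $p\in\{0\}\cup[1,\infty)$.
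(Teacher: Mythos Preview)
Your proof is correct and matches the paper's approach exactly: the paper does not even give a separate proof of this corollary, but simply presents it as a rephrasing of Theorem~\ref{thm:Main} via the isometry \eqref{eq:embedding}, which is precisely what you do.
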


Recall that the Skorokhod representation theorem asserts that for a sequence of probabilities $\mu_n, \mu \in \mathcal P_p(\R), n \in \N,$ on the real line we have $\mu_n \to \mu$ weakly if and only if on some probability space $(\Omega, \P)$ there are random variables $X_n\sim \mu_n, X\sim\mu, n \in \N$ with $X_n \to X$ in probability.
Indeed, in this case one can take $(\Omega, \P):=((0,1), \lambda)$, $X_n:= Q^{\mu_n}$ and $X := Q^\mu$.

In complete analogy, Corollary \ref{cor:Skorokhod} asserts that adapted weak convergence of a sequence of probabilities $\mu_n \to \mu$ is equivalent to the convergence of the respective quantile processes in probability on $((0,1)^N, \lambda^N)$. 
We note that in contrast to the classical Skorokhod theorem, the assertion on almost sure convergence does not hold in the multi-period setting, as we will discuss in \Cref{cor:Bogachev} below.

\subsection{Completion of $(\P_p(\R^N), \KR_p)$}

Equipped with the Knothe--Rosenblatt distance $\KR_p$, the space $\P_p  (\R^N)$ is not complete.\footnote{Also all the other known natural metrics of the weak adapted topology are not complete. In \cite{BaBePa21} it is established that the completion of $\P_p(\R^N)$ w.r.t.\ adapted Wasserstein distance can be identified with the set of filtered stochastic processes.} 
In view of the isometric embedding \eqref{eq:embedding} it is clear that the completion  $\overline{\P_p  (\R^N)}^{\KR_p}$ of $\P_p(\R^N)$ consists precisely in the  $\|.\|_p$-closure of all quantile processes with finite $p$-th moment. In the next result we characterize this closure. Following \cite{BoKoMe05} we call a triangular transformation $T=(T_k)_{k=1}^N:(0,1)^N \to \R$ \emph{increasing} if
it satisfies \eqref{CanonicalIncreasingI}, i.e.\ for all $k\leq N, u_1, \ldots, u_k, u_k'$  with $u_k \leq u'_k$
\begin{align*} 
 T_k(u_1, \ldots,  u_k) &\quad \leq \quad  T_k(u_1, \ldots, u_k'). 
\end{align*}
We call  $T=(T_k)_{k=1}^N:(0,1)^N \to \R$ \emph{strictly increasing} if
it  satisfies  for all $k\leq N, u_1, \ldots, u_k, u_k'$  with $u_k < u'_k$
\begin{align}\label{eq:sinc} \tag{\text{sinc}}
 T_k(u_1, \ldots,  u_k) &\quad < \quad  T_k(u_1, \ldots, u_k'). 
\end{align}
We write $L^\text{{\rm IT}}_p(\lambda^N)$ / $L^\text{{\rm SIT}}_p(\lambda^N)$ for the set of all increasing / strictly increasing triangular transformations which are $p$-integrable and we write $L^{{\rm quantile}}_p(\lambda^N)$ for the set of $p$-integrable quantile processes. 
\begin{theorem}
    \label{thm:closure} Let $p\in \{0\} \cup [1,\infty)$, then $L_p^{\rm SIT}(\lambda^N)$, $L_p^{\rm quantile}(\lambda^N)$ and $L_p^{\rm IT}(\lambda^N)$ are convex cones with:
    \begin{enumerate}
    \item The $\|.\|_p$-closure of all $p$-integrable quantile processes $L^{\rm quantile}_p(\lambda^N)$ consists precisely in $L^\text{{\rm IT}}_p(\lambda^N)$. In particular  the completion $ \overline{\P_p  (\R^N)}^{\KR_p}$ of $\P_p(\R^N)$ is  $L^\text{{\rm IT}}_p(\lambda^N)$ up to isometry.
    \item $\mu \in \P_p(\R^n)$ is triangular regular if and only if $Q^\mu\in L^\text{{\rm SIT}}_p(\lambda^N)$.
    \item $L^\text{{\rm SIT}}_p(\lambda^N)$ is a dense subset of $L^{{\rm quantile}}_p(\lambda^N)$.
 \end{enumerate}
\end{theorem}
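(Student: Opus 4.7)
I would prove the three parts in the order (2), (3), (1), since the argument for (1) will use (3). The cone structures of $L^{\text{IT}}_p(\lambda^N)$ and $L^{\text{SIT}}_p(\lambda^N)$ are immediate from their definitions. For $L^{\text{quantile}}_p(\lambda^N)$ the only nontrivial point is preservation of \eqref{CanonicalIncreasingII} under sums: if $Q_k(u_1,\ldots,u_k)+Q'_k(u_1,\ldots,u_k) = Q_k(u_1,\ldots,u'_k)+Q'_k(u_1,\ldots,u'_k)$ for some $u_k\le u'_k$, then \eqref{CanonicalIncreasingI} applied to each of $Q,Q'$ forces equality of the two summands separately, and \eqref{CanonicalIncreasingII} for each then yields \eqref{CanonicalIncreasingII} for the sum; positive scalings are even easier.

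Part (2) follows directly from the recursive construction recalled after \Cref{lem:Qmu}: for fixed $(u_1,\ldots,u_{k-1})$, the slice $Q^\mu_k(u_1,\ldots,u_{k-1},\cdot)$ is the quantile function of the conditional $\mu_k^{Q^\mu_1(u_1),\ldots,Q^\mu_{k-1}(u_1,\ldots,u_{k-1})}$, and a quantile function of a probability on $\R$ is strictly increasing if and only if the probability is atom free. For (3), the subset claim $L^{\text{SIT}}_p(\lambda^N)\subseteq L^{\text{quantile}}_p(\lambda^N)$ is trivial, since \eqref{CanonicalIncreasingII} is vacuously satisfied under \eqref{eq:sinc}. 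Density is witnessed by the linear perturbation
\[
Q^{(\varepsilon)} := (1-\varepsilon)\,Q + \varepsilon\,\id,\qquad \id(u_1,\ldots,u_N) := (u_1,\ldots,u_N),
\]
whose $k$-th component $(1-\varepsilon)Q_k(u_1,\ldots,u_k)+\varepsilon\,u_k$ is triangular, strictly increasing in $u_k$ (the $\varepsilon\,u_k$-term upgrades the weak monotonicity of $Q_k$ to strict), and $p$-integrable since $\id$ is bounded on $(0,1)^N$. Thus $Q^{(\varepsilon)}\in L^{\text{SIT}}_p(\lambda^N)$ and $\|Q-Q^{(\varepsilon)}\|_p = \varepsilon\,\|Q-\id\|_p\to 0$.

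For (1), the inclusion $L^{\text{IT}}_p(\lambda^N)\subseteq\overline{L^{\text{quantile}}_p(\lambda^N)}^{\|\cdot\|_p}$ follows from the same perturbation: for $T\in L^{\text{IT}}_p$ the map $T^{(\varepsilon)}:=(1-\varepsilon)T+\varepsilon\,\id$ lies in $L^{\text{SIT}}_p\subseteq L^{\text{quantile}}_p$ and converges to $T$ in norm. The reverse inclusion reduces to showing that $L^{\text{IT}}_p(\lambda^N)$ is $\|\cdot\|_p$-closed, which I anticipate as the main technical point. Given $T^n\to T$ in $L_p$, pass to an a.e.-convergent subsequence. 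Triangularity is inherited: viewing each $T^n_k$ as a function on $(0,1)^k$, $L_p$-convergence on $(0,1)^N$ reduces to $L_p$-convergence on $(0,1)^k$, so $T_k$ admits a representative depending only on $(u_1,\ldots,u_k)$ (given, e.g., by $\int_{(0,1)^{N-k}}T_k\,du_{k+1}\cdots du_N$). For monotonicity, Fubini on $(0,1)^k$ gives that for a.e.\ $(u_1,\ldots,u_{k-1})$ the sections $T^n_k(u_1,\ldots,u_{k-1},\cdot)$ are increasing on $(0,1)$ and converge a.e.; and it is classical that an a.e.-limit of increasing real functions coincides a.e.\ with an increasing function (take, e.g., the right-continuous extension from a dense set of convergence). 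Redefining $T_k$ to this modification on the good transverse sections, and to $0$ on the residual null set of $(u_1,\ldots,u_{k-1})$, yields a pointwise increasing, triangular representative in $L^{\text{IT}}_p$. The delicacy lies in performing these null-set corrections coherently as $k$ runs through $1,\ldots,N$, but doing so in the natural order raises no essential obstruction.
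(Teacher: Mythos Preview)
Your proof is correct and follows essentially the same approach as the paper. The only differences are cosmetic: the paper uses the perturbation $Q+\varepsilon\,\id$ rather than $(1-\varepsilon)Q+\varepsilon\,\id$, dismisses the closedness of $L_p^{\text{IT}}(\lambda^N)$ in one line (``clearly, the property of being triangular increasing is closed under $L_p$-convergence'') where you supply the Fubini/a.e.-subsequence details, and for part~(2) routes through bicausality and adapted inverses rather than your more direct observation that a quantile function is strictly increasing iff the underlying law is atom-free.
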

The following diagram summarizes the relations between these sets:
   \[
    \begin{array}{ccc}
        L^\text{{\rm SIT}}_p(\lambda^N) & = & \{Q_\mu: \mu \in \P_p(\R^N), \mu \text{ triangular regular}\} \\
   &     \rotatesupseteq  & \\
        L^{{\rm quantile}}_p(\lambda^N) & = & \{Q_\mu: \mu \in \P_p(\R^N)\} \\
      &  \rotatesupseteq  & \\
        L^\text{{\rm IT}}_p(\lambda^N) & = & \overline{\{Q_\mu: \mu \in \P_p(\R^N)\}}
    \end{array}.
\]

\subsection{Geodesic completeness and barycenters 
}
Starting with the work of McCann \cite{Mc91} the fact that Wasserstein distance on $\mathcal P_p(\R^N)$ is \emph{geodesic} has played a tremendous role in optimal transport and its applications. 
In the case of the Knothe--Rosenblatt distance, the geodesic structure becomes very simple due to the isometry \eqref{eq:embedding} which maps into the linear space $L^p(\lambda^N)$. 
We have seen in Theorem \ref{thm:closure} that the set of quantile processes is a convex cone and thus obtain:
\begin{corollary}\label{cor:GeoComplete}
The sets $\P_p(\R^N)$, $ \{ \mu \in \P_p(\R^N):  \mu \text{ triangular regular}\}$, $\overline{\P_p  (\R^N)}^{\KR_p}$  are geodesically complete w.r.t.\ $\KR_p$ for $p\geq 1$ and geodesics are unique for $p>1$.
\end{corollary}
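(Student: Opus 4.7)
The plan is to pass the geodesic question through the isometric embedding \eqref{eq:embedding} to the Banach space $L^p(\lambda^N)$, where geodesic completeness and uniqueness are classical.

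First I would observe that by \eqref{eq:embedding} the map $\mu \mapsto Q^\mu$ identifies the three spaces $\P_p(\R^N)$, $\{\mu \in \P_p(\R^N): \mu \text{ triangular regular}\}$, and $\overline{\P_p(\R^N)}^{\KR_p}$ isometrically with the subsets $L^{\rm quantile}_p(\lambda^N)$, $L^{\rm SIT}_p(\lambda^N)$, and $L^{\rm IT}_p(\lambda^N)$ of $L^p(\lambda^N)$, where the identification in the triangular regular case uses Theorem \ref{thm:closure}(2). Crucially, Theorem \ref{thm:closure} states that all three of these image sets are convex cones in $L^p(\lambda^N)$.

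For geodesic completeness (\textbf{existence}), given $\mu, \nu$ in any of the three spaces I would set $\gamma(t) := Q^{-1}\bigl((1-t)Q^\mu + tQ^\nu\bigr)$. Since the image set is convex, the straight line $(1-t)Q^\mu + tQ^\nu$ lies in the image, so $\gamma$ is well-defined, and
\[
\KR_p(\gamma(s), \gamma(t)) \;=\; \bigl\|(t-s)(Q^\nu - Q^\mu)\bigr\|_p \;=\; |t-s|\cdot \KR_p(\mu,\nu),
\]
establishing that $\gamma$ is a constant-speed geodesic from $\mu$ to $\nu$.

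For \textbf{uniqueness} when $p>1$, I would use strict convexity of $L^p(\lambda^N)$. Any geodesic $\tilde\gamma$ between $\mu$ and $\nu$ maps under $Q$ to a curve $f(t) = Q^{\tilde\gamma(t)}$ in $L^p(\lambda^N)$ satisfying $\|f(t)-f(0)\|_p = t\|f(1)-f(0)\|_p$ and $\|f(1)-f(t)\|_p = (1-t)\|f(1)-f(0)\|_p$; adding these gives equality in the triangle inequality, which in a strictly convex normed space forces $f(t) = (1-t)f(0) + tf(1)$, and hence $\tilde\gamma = \gamma$. Since everything reduces to standard Banach-space facts once Theorem \ref{thm:closure} is in hand, there is no substantive obstacle; the only step requiring a line of justification is the strict convexity argument for uniqueness.
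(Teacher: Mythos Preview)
Your proposal is correct and follows essentially the same approach as the paper: reduce via the isometry \eqref{eq:embedding} to convex subsets of $L^p(\lambda^N)$ (which is the content of Theorem \ref{thm:closure}, proved in Section~4.2), and then invoke standard Banach-space geometry. The paper leaves the uniqueness argument for $p>1$ implicit, whereas you spell out the strict-convexity reasoning; otherwise the two arguments coincide.
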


The notion of Wasserstein barycenter was famously introduced by Agueh and Carlier \cite{AgCa11}. Naturally we can define the analogous concept for the Knothe--Rosenblatt distance. Using again the isometry \eqref{eq:embedding} it follows that for  $\mu_1, \ldots, \mu_n \in \mathcal P_p(\R^N)$ and convex weights $\lambda_1, \ldots, \lambda_n$ there exists a unique (for $p>1$) \emph{Knothe--Rosenblatt barycenter} 
i.e.\ a probability
$\bar\mu\in \mathcal P_p(\R^N)$ that minimizes  
$$\inf_{\mu\in \mathcal P_p(\R^N)}  \lambda_1 \KR_p^p(\mu, \mu_1) + \ldots + \lambda_n \KR_p^p(\mu, \mu_n),$$
indeed $\bar \mu$ is given by $Q^{\bar \mu}= \lambda_1 Q^{\mu_1} +  \ldots + \lambda_n Q^{\mu_n}$.

We also establish that the geometric structure of $(\P_p(\R^N), \KR_p)$ is compatible with properties that are interesting from a stochastic process perspective. In particular we show that the set of martingale measures and the set of all probabilities that correspond to predictable processes are geodesically convex, see \Cref{ssec:preservative}.

\subsection{Bicausal couplings and adapted Wasserstein distance} \label{ssec:AW}

Adapted versions of the classical Wasserstein distance have been independently introduced by different authors, 
see  
 \cite{Ru85, Gi04, PfPi12, PfPi14, BiTa19, NiSu20}. Here  `adaptedness' is incorporated in its definition by imposing an additional causality constraint on the set of admissible  couplings that we introduce next:

Given $\mu, \nu \in \P_0 (\R^N)$ the set $\cpl(\mu,\nu)$ consists of all measures $\pi \in \P_0 (\R^N\times \R^N)$ with marginals $\mu$ and $\nu$. 
The set $\cplbc(\mu, \nu)$ of \emph{bicausal couplings} consists of those $\pi$ whose successive disintegration kernels satisfy a.s.\
$$\pi_k^{x_1, \ldots, x_{k-1}, y_1, \ldots, y_{k-1}} \in \cpl(\mu_k^{x_1, \ldots, x_{k-1}},  \nu_k^{y_1, \ldots, y_{k-1}}), \quad  k \leq N,$$
where we disintegrate $\pi$ ($\mu$ and $\nu$) analogous to \eqref{eq:kernel1} above, i.e.\
\[
    \pi(dx_1, dy_1, \ldots, dx_N, dy_N) = \pi_1 (dx_1, dy_1) \pi_{2}^{x_1, y_1} (dx_2, dy_2)
    \ldots \pi_{N-1}^{x_1, y_1 \ldots, x_{N}, y_{N-1}} (dx_N, dy_N).
\]
If $\mu, \nu$ are both triangular regular, the set of bicausal couplings is the closure of Monge couplings which are concentrated on the graph of a triangular mapping with triangular inverse, see \cite{BePaSc22}.

The adapted Wasserstein distance of $\mu, \nu\in \mathcal P_p(\R^N)$ is given by 
\begin{align*}
     \AW_p^p(\mu, \nu)& := \inf\{\textstyle \int |x-y|_p^p\, d\pi: \pi \in \cplbc(\mu, \nu)\}.
\end{align*}
A frequently used definition of the Knothe--Rosenblatt coupling is the following:
 $\pi$ is the Knothe--Rosenblatt coupling between $\mu$ and $\nu$ if and only if we have $\pi$-almost surely that for $k \leq N$ and $x_1, \ldots, x_{k-1}, y_1, \ldots, y_{k-1}$ 
\begin{align}\label{eq:KRDisDef}
\pi_k^{x_1, \ldots, x_{k-1}, y_1, \ldots, y_{k-1}} \text{  is the quantile coupling of } \mu_k^{x_1, \ldots, x_{k-1}} \text{ and } \nu_k^{y_1, \ldots, y_{k-1}},   
\end{align}
see e.g.\ \cite{BaBeLiZa17}. In particular, the Knothe--Rosenblatt coupling is bicausal and thus 
  $\AW_p(\mu, \nu) \leq \KR_p(\mu, \nu)$. 
We collect a number of further results comparing $\AW_p$ and $\KR_p$
\begin{enumerate}
    \item $\AW_p$ and $\KR_p$ are not equivalent as metrics. Indeed their completions are \emph{incomparable} as we discuss in Section \ref{sec:DifferentComp} below. 
    \item $\AW_p$ and $\KR_p$ \emph{are} equivalent metrics on the set of triangular $L$-Lipschitz measures, see Theorem  \eqref{thm:EquivalentMetrics} below. Here we say that $\mu$ is \emph{triangular $L$-Lipschitz} if the map $$\R^{k-1}\ni (x_1, \ldots, x_{k-1}) \to \mu_k^{x_1, \ldots, x_{k-1}} \in \P_p(\R)$$
    is $L$-Lipschitz w.r.t.\ the $p$-Wasserstein distance on $\P_p(\R)$  for each $k\leq n$.
    \item In \cite[Corollary 2]{Ru85} Rüschendorf shows that under a \emph{monotone regression assumption}
    $\AW_p(\mu, \nu) = \KR_p(\mu, \nu)$, see also \cite[Proposition 5.3]{BaBeLiZa17} and \cite[Proposition 3.5]{BaKaRo22}.
\end{enumerate}

It is shown in \cite{BaBePa21} that the  completion of $\mathcal P_p(\R^N)$ w.r.t.\ the adapted Wasserstein distance can be interpreted as the set $\mbox{FP}_p$ of all \emph{stochastic processes with filtration}. 
While $(\mbox{FP}_p, \AW_p)$ is geodesically complete,  $\mathcal P_p(\R^N)$ itself is not geodesically complete when equipped with the adapted Wasserstein distance. 

We note that  in contrast to the Knothe--Rosenblatt case, $\AW_p$-compact subsets of $\mbox{FP}_p$ admit a simple characterization  in the spirit of Prokhorov's theorem, see \cite[Theorem 1.7]{BaBePa21}. 
We consider this as a distinct advantage of $\AW_p$ over  $\KR_p$.

\subsection{Knothe--Rosenblatt Rearrangement in $(\R^d)^N$} \label{ssec:generalization}

In view of applications, it is important to consider the adapted weak topology on $\P_p((\R^d)^N)$, where $d\in \N$ stands for the dimension of the \emph{state space}, while $N\in \N$ denotes the number of time steps. 
Given that the Knothe--Rosenblatt distance is a particularly simple metric for the adapted weak topology, it is interesting to ask for extensions to $d>1$.  
We are aware of two obvious generalizations that are suitable for our purposes. 

The more interesting (from our perspective) extension of the Knothe--Rosenblatt coupling to multiple dimensions is based on defining the quantile process for the state space $\R^d$. 
In the case $p=2$ this amounts to: 
\begin{definition} \label{def:quantile_process_multi}
A triangular $Q=(Q_k)_{k=1}^N: ((0,1)^d)^N\to (\R^d)^N$  is a \emph{quantile process} if for all $k\leq m \leq N, u_1, \ldots, u_m, u_k' \in (0,1) $ with $u_k \leq u'_k$ 
\begin{align}\label{CanonicalIncreasingIMulti2} 
 Q_k(u_1, \ldots,  u_{k-1}, \cdot) & \quad  \text{ is the gradient of a convex function } \tag{\text{$|.|_2$-mon}} \\
 Q_k(u_1, \ldots, u_k) = Q_k(u_1, \ldots, u_k')  &  \Rightarrow Q_m(u_1, \ldots, u_k, \ldots, u_m )= Q_m(u_1, \ldots, u_k', \ldots, u_m). \tag{\text{con}}  \label{CanonicalIncreasingIIMulti2}
\end{align}
\end{definition}
Condition \eqref{CanonicalIncreasingIMulti2} asserts that $Q_k(u_1, \ldots,  u_{k-1}, \cdot)$ is a $\W_2$-optimal transport map. For general $p>0$ we would consider $\W_p$-optimal mappings, see    \cite[Theorem 3.7]{GaMc96}. The monotonicity condition \eqref{CanonicalIncreasingIMulti2} can then be rephrased (for instance) as  
\begin{align}\label{CanonicalIncreasingIMultiP}
 Q_k(u_1, \ldots,  u_{k-1}, \cdot) & \quad  \text{ is $|\cdot - \cdot|_p^p$-cyclically monotone}. \tag{\text{$|.|_p$-mon}}
\end{align}
As above, for each $\mu\in \P_p((\R^d)^N)$ there exists a unique quantile process $Q^\mu$ that pushes $(\lambda^d)^N$ to $\mu$.
Analogously to the one-dimensional case we define a generalized Knothe--Rosenblatt coupling and distance through
\[
    \kr_{\mu,\nu} := \text{law}( Q^\mu, Q^\nu) \quad\text{and}\quad
    \KR_p^p(\mu,\nu) := \int |Q^\mu - Q^\nu |_p^p \, d\lambda^{dN}.
\]
For this construction, our main result extends:
\begin{theorem}
    \label{thm:KRandAW}
    Let $p > 1$. Then the map
    \[
        (\P_p((\R^d)^N), \KR_p) \to (L_p((\lambda^d)^N),\|.\|_p) \colon \mu \to Q^\mu
    \]
    is an isometry.
    $\KR_p$ metrizes the $p$-adapted weak topology. That is, for $\mu, \mu_n\in \P_p((\R^d)^N), n \in \N$ we have 
    \[
        Q^{\mu_n} \to Q^\mu\text{ in $L_p((\lambda^d)N)$} \iff \KR_p(\mu_n, \mu) \to 0 \iff \AW_p(\mu_n, \mu) \to 0.
    \]
\end{theorem}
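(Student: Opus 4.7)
The plan is to deduce the theorem from three ingredients: the tautological isometry property, bicausality of the Knothe--Rosenblatt coupling, and an induction on $N$ whose base case is a stability statement for multi-dimensional Brenier maps with fixed Lebesgue source. First I observe that the identity
\[
\KR_p^p(\mu,\nu) = \int |Q^\mu - Q^\nu|_p^p\,d(\lambda^d)^N
\]
is built into the multi-dimensional definition of $\KR_p$, so $\mu\mapsto Q^\mu$ is an isometric embedding into $L_p((\lambda^d)^N)$; this already gives the equivalence between $\KR_p(\mu_n,\mu)\to 0$ and $Q^{\mu_n}\to Q^\mu$ in $L_p$. For the easy direction of the remaining equivalence, I verify that $\kr_{\mu,\nu}\in\cplbc(\mu,\nu)$: its disintegration at step $k$, conditioned on $(x_{1:k-1},y_{1:k-1})$, is the $|\cdot|_p$-cyclically monotone coupling of $\mu_k^{x_{1:k-1}}$ and $\nu_k^{y_{1:k-1}}$, which in particular belongs to $\cpl(\mu_k^{x_{1:k-1}},\nu_k^{y_{1:k-1}})$. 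Hence $\AW_p\le\KR_p$ and $\KR_p$-convergence implies $\AW_p$-convergence.

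The substantive direction, $\AW_p(\mu_n,\mu)\to 0 \Rightarrow Q^{\mu_n}\to Q^\mu$ in $L_p$, I prove by induction on $N$. The base case $N=1$ is the stability statement: if $\mu_n\to\mu$ in $\W_p(\R^d)$, the $|\cdot|_p$-cyclically monotone maps $Q^{\mu_n}\colon(0,1)^d\to\R^d$ pushing $\lambda^d$ to $\mu_n$ converge in $L_p(\lambda^d)$ to $Q^\mu$. Existence and uniqueness of such maps for $p>1$ is Gangbo--McCann; for the convergence I would argue by compactness. The couplings $(\id,Q^{\mu_n})_\#\lambda^d$ are tight with uniformly integrable $p$-th moments, and every cluster point is concentrated on the graph of a $|\cdot|_p$-cyclically monotone map pushing $\lambda^d$ to $\mu$, hence equal to $(\id,Q^\mu)_\#\lambda^d$ by uniqueness. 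Convergence in probability therefore holds along any subsequence, and moment convergence upgrades it to $L_p$.

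For the inductive step, assume the result for $N-1$ and let $\AW_p(\mu_n,\mu)\to 0$. The nested tower formula for $\AW_p$ supplies (i) $\W_p$-convergence of the first marginals $\mu_n^{1}\to\mu^{1}$ and (ii) optimal bicausal couplings $\pi_n\in\cplbc(\mu_n,\mu)$ whose first-level marginals $\gamma_n$ satisfy $\int \AW_p^p(\mu_n^{|x'_1},\mu^{|x_1})\,d\gamma_n(x'_1,x_1)\to 0$. Applying the base case to (i) gives $Q^{\mu_n}_1\to Q^\mu_1$ in $L_p(\lambda^d)$, so along a subsequence we have pointwise convergence $\lambda^d$-a.e.\ Composing with the disintegration kernels and applying the inductive hypothesis fiberwise then yields $(Q^{\mu_n}_k)_{k=2}^N\to(Q^\mu_k)_{k=2}^N$ in $L_p((\lambda^d)^N)$, which together with the first-step convergence closes the induction.

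I expect the main obstacle to be the fiberwise conditional stability in the inductive step: transferring $\AW_p$-closeness of $\mu_n$ and $\mu$ into $\AW_{p}$-closeness of their disintegrations evaluated at the \emph{moving} sample points $Q^{\mu_n}_1(u_1)\to Q^\mu_1(u_1)$. In one dimension (\Cref{thm:Main}) the explicit scalar structure of quantile functions makes this almost automatic; for $d>1$ the Brenier maps lack a comparably simple componentwise monotonicity and one must combine the $L_p$-convergence of $Q^{\mu_n}_1$ with a Lusin-type measurable-selection continuity of the disintegration, together with a careful diagonal subsequence extraction to align the $\gamma_n$-sampled points with the $\lambda^d$-a.e.\ limit of $Q^{\mu_n}_1$. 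This step is also where the hypothesis $p>1$ is essential, since it ensures uniqueness of the Brenier map and therefore the closability of the induction.
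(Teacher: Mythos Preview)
Your overall architecture matches the paper's: the isometry is definitional, bicausality of $\kr_{\mu,\nu}$ gives $\AW_p\le\KR_p$, and the hard direction is handled by induction on $N$ with the base case being stability of Brenier maps from the fixed source $\lambda^d$ (your argument for the base case is essentially the paper's Corollary~\ref{cor:Brenier.convergence}). Where you diverge from the paper is in the inductive step, and the gap you yourself flag is real.

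The difficulty is exactly what you say: the integral $\int\AW_p^p(\mu_n^{|x_1'},\mu^{|x_1})\,d\gamma_n\to 0$ is taken against the first-level marginal $\gamma_n$ of an optimal bicausal coupling, whereas fiberwise application of the inductive hypothesis requires control of $\AW_p(\mu_n^{|Q^{\mu_n}_1(u_1)},\mu^{|Q^\mu_1(u_1)})$, i.e.\ along the Knothe--Rosenblatt coupling of the first step. For $d=1$ these coincide because the monotone coupling is $\W_p$-optimal; for $d\ge 2$ the coupling $(Q^{\mu_n}_1,Q^\mu_1)_\#\lambda^d$ is in general \emph{not} optimal, and there is no direct way to transfer the estimate. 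Your proposed remedy via Lusin continuity of the disintegration and diagonal subsequences is not how the paper proceeds and would be delicate to carry out.

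The paper sidesteps the moving-base-point issue altogether (see Proposition~\ref{prop:AW=KR} and its multi-dimensional adaptation). Instead of tracking disintegrations along $\gamma_n$, it uses: (a) the characterization of the $\AW_p$-topology as the initial topology of the maps $\mu\mapsto(x\mapsto(x_{1:k-1},K^\mu_k(x_{1:k-1})))_\#\mu$, which gives convergence in law of $(X^n_{1:k-1},K^{\mu_n}_k(X^n_{1:k-1}))$; (b) continuity of the Brenier-map selector (Corollary~\ref{cor:Brenier.convergence}) to upgrade this to convergence in law of $(X^n_{1:k-1},U_k,X^n_k)$; (c) Eder's continuity of the gluing operation at a limit that lies in $\mathcal F_p$, to splice this with the inductive hypothesis $Law(U_{1:k-1},X^n_{1:k-1})\to Law(U_{1:k-1},X_{1:k-1})$; and (d) Lemma~\ref{lem:MeyerZheng} to pass from convergence in law of $(U_{1:k},X^n_{1:k})$ with fixed $U$-marginal to convergence in probability of $X^n_{1:k}$. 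This route never compares disintegrations at mismatched base points and is what makes the induction close cleanly for $d>1$.
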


Another possibility is to  interpret the definition of the Knothe--Rosenblatt distance given in \eqref{eq:KRDisDef} in multiple dimensions.  Fix $p\geq 1$ and $\mu, \nu\in \P_p((\R^d)^N)$.
We say that $\pi\in \cpl(\mu, \nu)$ is \emph{triangular optimal} 
if we have $\pi$-almost surely that for $k \leq N$ and $x_1, \ldots, x_{k-1}, y_1, \ldots, y_{k-1}$ 
\begin{align}\label{eq:KRDisDef.multidim}
\pi_k^{x_1, \ldots, x_{k-1}, y_1, \ldots, y_{k-1}} \text{ is a $\W_p$-optimal coupling between } \mu_k^{x_1, \ldots, x_{k-1}} \text{ and } \nu_k^{y_1, \ldots, y_{k-1}}.
\end{align}
We denote the set of all triangular optimal couplings by $\cpl_{\text{to}}(\mu, \nu)$ and set
\begin{align}\label{eq:KRMultiTilde}
\widetilde \KR_p^p(\mu, \nu):= \sup_{\pi\in \cpl_{\text{to}}(\mu, \nu)} \int |x-y|^p\, d\pi(x,y).
\end{align}
With these definitions we have: 
\begin{theorem}\label{thm:tildeKRandAW} 
Let $p\geq 1$. Then $\widetilde \KR_p$ induces the $p$-weak adapted topology. That is,  for $\mu, \mu_n\in \P_p((\R^d)^N), n\in \N$ we have $\widetilde\KR_p(\mu_n, \mu) \to 0 $ if and only if $\AW_p(\mu_n, \mu) \to 0$.
\end{theorem}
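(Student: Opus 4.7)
One direction is immediate. Every $\pi \in \cpl_{\text{to}}(\mu,\nu)$ is bicausal — its defining kernels are by construction couplings of the successive conditional laws — so $\cpl_{\text{to}}(\mu,\nu) \subseteq \cplbc(\mu,\nu)$ and each such $\pi$ satisfies $\int |x-y|^p\,d\pi \ge \AW_p^p(\mu,\nu)$. Taking the supremum over $\cpl_{\text{to}}(\mu,\nu)$ gives $\widetilde\KR_p \ge \AW_p$, so $\widetilde\KR_p(\mu_n,\mu) \to 0$ forces $\AW_p(\mu_n,\mu) \to 0$.

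For the converse I would argue by contradiction and compactness. Suppose $\AW_p(\mu_n,\mu) \to 0$ while $\widetilde\KR_p(\mu_n,\mu) \not\to 0$. Extracting a subsequence, pick $\pi_n \in \cpl_{\text{to}}(\mu_n,\mu)$ with $\int |x-y|^p\,d\pi_n \ge \varepsilon$ for some $\varepsilon > 0$. Since $\AW_p$-convergence implies $\W_p$-convergence of the marginals, the family $\{\pi_n\}$ is tight and $|x-y|^p$ is uniformly $\pi_n$-integrable. Along a further subsequence, $\pi_n \to \pi$ weakly with $\pi \in \cpl(\mu,\mu)$ and $\int |x-y|^p\,d\pi \ge \varepsilon$.

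The crux is to show that this weak limit $\pi$ itself lies in $\cpl_{\text{to}}(\mu,\mu)$. Once this is granted, a short induction on the time index $k$ forces $\pi$ onto the diagonal: $\pi_1$ is a $\W_p$-optimal coupling of $\mu_1$ with itself, hence supported on $\{x_1 = y_1\}$; assuming the projection of $\pi$ onto the first $k-1$ coordinates of each factor is concentrated on the diagonal, the kernel $\pi_k^{x,x}$ is a $\W_p$-optimal coupling of $\mu_k^x$ with itself and is likewise supported on $\{x_k = y_k\}$. This produces $\int |x-y|^p\,d\pi = 0$, contradicting $\int |x-y|^p\,d\pi \ge \varepsilon$.

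The main technical obstacle is therefore precisely the stability assertion that weak limits of triangular optimal couplings along $\AW_p$-convergent marginals remain triangular optimal. Such stability is false for mere weak convergence of $\mu_n$, and it is here that the strength of $\AW_p$-convergence enters. I would handle it by induction on $N$. For $N=1$ one has $\widetilde\KR_p = \W_p = \AW_p$ and $\cpl_{\text{to}}$ is the set of $\W_p$-optimal couplings, so the claim follows from classical stability of $\W_p$-optimal couplings under $\W_p$-converging marginals. For the inductive step the recursive decomposition $\pi_n = \pi_{n,1} \otimes \pi_{n,\ge 2}^{x_1,y_1}$, with $\pi_{n,1}$ a $\W_p$-optimal coupling of $\mu_{n,1},\mu_1$ and $\pi_{n,\ge 2}^{x_1,y_1}$ triangular optimal between the respective conditional laws on $(\R^d)^{N-1}$, combined with the characterization of $\AW_p$-convergence via convergence of the prediction-process lift (see \cite{BaBePa21}), allows one to apply the inductive hypothesis to the conditional laws and obtain triangular optimality of the kernels of $\pi$ at all subsequent times.
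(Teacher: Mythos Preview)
Your approach differs substantially from the paper's. The paper does not argue by compactness and stability; instead it notes that the quantitative modulus-of-continuity estimate from the proof of Theorem~\ref{thm:Main} transfers verbatim. For any $\pi \in \cpl_{\text{to}}(\mu_n,\mu)$ the kernel-optimality built into the definition gives
\[
\mathbb{E}_\pi\bigl[|X_k-Y_k|^p\bigr]^{1/p} = \mathbb{E}_\pi\bigl[\W_p^p\bigl(K_k^{\mu_n}(X_{1:k-1}),K_k^\mu(Y_{1:k-1})\bigr)\bigr]^{1/p},
\]
which is exactly the role played by \eqref{eq:direct.proof.remark}; the triangle inequality for $\W_p$ and Eder's modulus $\omega_\mu^k$ then yield the recursion $\mathbb{E}_\pi[|X_k-Y_k|^p]^{1/p}\le f_\mu^k(\AW_p(\mu_n,\mu))$. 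Since the $f_\mu^k$ depend only on the fixed limit $\mu$, this bound is uniform over $\cpl_{\text{to}}(\mu_n,\mu)$ and gives $\widetilde\KR_p(\mu_n,\mu)\le\sum_k f_\mu^k(\AW_p(\mu_n,\mu))\to 0$ directly, with no limit passage in the couplings needed.

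Your compactness route can in principle be completed, but the stability step you isolate is a real gap as written. Weak convergence $\pi_n\to\pi$ does \emph{not} yield convergence of the disintegrations $\pi_{n,\ge 2}^{x_1,y_1}$, so the phrase ``apply the inductive hypothesis to the conditional laws'' hides the entire difficulty: to invoke the hypothesis on $(\R^d)^{N-1}$ you would need, for $\mu_1$-a.e.\ $x_1$, a sequence of triangular optimal couplings on $(\R^d)^{N-1}$ converging weakly to $\pi^{x_1,x_1}$ with $\AW_p$-converging marginals, and nothing you have written produces such a sequence. This can be repaired by lifting to the prediction-process space (so that $\AW_p$-convergence of $\mu_n$ becomes convergence of graph-supported measures on $\R^d\times\P_p((\R^d)^{N-1})$) and then invoking a continuity-of-gluing argument in the spirit of Lemma~\ref{lem:MeyerZheng} or \cite[Theorem~4.1]{Ed19} to recover the needed convergence of conditionals. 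That is honest work, not a one-liner, and is precisely what the paper's direct estimate circumvents.
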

However, $\widetilde \KR_p$  does \emph{not} satisfy the triangular inequality and is thus not a metric. (See section \ref{sec:OnWideTilde}.)

\subsection{Literature}

The Knothe--Rosenblatt coupling was independently introduced by Knothe \cite{Kn57} and Rosenblatt \cite{Ro52} for applications to geometric inequalities.

The notion of triangular transformation was introduced in  \cite{BoKoMe05} and applied for the derivation of (generalized) Talgrand- and logarithmic Sobolev inequalities.  Among further results, this article also establishes that convergence in total variation implies convergence of the corresponding increasing triangular transformations, see also Corollary \ref{cor:Bogachev} below. 
Triangular transformation have seen increasing use in machine learning.
E.g.\ triangular transformations play a central role in \cite{PaMa18} where a Metropolis-Hastings algorithm for sampling complex high-dimensional distributions is proposed.
In \cite{SpBiMa18} sparse triangular transport maps are used  as a tractable class of transformation to tackle filtering and variational inference problems.
They are employed for conditional density estimation and structure learning in \cite{BaMaZa20}.
In \cite{XuLiMuAc20} neural networks with triangular structure are used to parametrize the dual variables of the causal transport problem.

We emphasize that following the contributions on quantile / increasing triangular transformations in  \cite{Ru85, BoKoMe05, BaBeLiZa17} (among others) the results in the preparatory Section \ref{sec:preparations} below are folklore.

Continuous-time versions of the Knothe--Rosenblatt coupling are presented in \cite{BiTa19, BaKaRo22}. In particular these articles extend results of \cite{Ru85} on the optimality of the Knothe--Rosenblatt distance in the bi-causal transport problem to continuous time and consider the induced topology on laws of stochastic processes.

The article  \cite{CaGaSa09} connects the Knothe--Rosenblatt coupling to classical optimal transport: for triangular regular probabilities, the Knothe--Rosenblatt transformation is the limit of Brenier maps when assigning progressively smaller weights to later coordinates.

Adapted versions of the Wasserstein distance play an important role in stochastic optimization and multistage programming, see e.g.\  \cite{Pf09, PfPi12, KiPfPi20, BaWi23}. Adapted weak topologies are a useful tool for various problems in mathematical finance, e.g.\ in the pricing of game options \cite{Do14}, questions of insider trading and enlargement of filtrations \cite{AcBaZa20},  stability of pricing / hedging and utility maximization  \cite{GlPfPi17, BaDoDo20, BaBaBeEd19a, BeJoMaPa21a} and  interest rate uncertainty \cite{AcBePa20}. We refer to \cite[Sections 1.8, 1.9]{BaBePa19} for a more complete account on the literature on adapted Wasserstein distance and adapted weak topologies.

\subsection{Organisation of the paper}
In Section \ref{sec:preparations} we introduce notation and basic results. Specifically, we discuss the representation of the Knothe--Rosenblatt coupling through quantile processes (Lemma \ref{lem:Qmu}) and the alternative characterization of the Knothe--Rosenblatt coupling given in \eqref{eq:KRDisDef} above. 

In Section \ref{sec:ProofMainResult} we give the proof of our main result that the Knothe--Rosenblatt distance induces the weak adapted topology based on the notion of modulus of continuity of measures introduced in \cite{Ed19}. As a consequence of our argument we also obtain that $\AW_p$ and $\KR_p$ are equivalent for (uniformly) triangular Lipschitz kernels. 

In Section \ref{sec:IsometryConsequences} we draw consequences from the isometry \eqref{eq:embedding} between the Knothe--Rosenblatt distance and the $p$-norm on $L_p(\lambda ^N)$. In particular we prove the results concerning the metric completion of $\KR_p$ as well 
as on geodesic completeness announced above.

Finally the appendix is concerned with the Knothe--Rosenblatt distance in the case of a multiple dimensional state space.

\section{Preparations}\label{sec:preparations}

\subsection{Setting and Notation}

For a Polish metric space $(\mathcal X, d_\mathcal X)$, we denote by $\mathcal P_p(\mathcal X)$ the set of Borel probability measures on $\mathcal X$ that finitely integrate $d_\mathcal X(x,x_0)^p$ for some (thus any) $x_0 \in \mathcal X$.
For the special case $p = 0$, we use $\mathcal P_0$ and $\mathcal P$ synonymously.
When $T \colon \mathcal X \to \mathcal Y$ is a measurable map between Polish spaces and $\mu \in \mathcal P(\mathcal X)$, we write $T_\# \mu$ for the push-forward measure of $\mu$ under $T$.
The product space $\mathcal X \times \mathcal Y$ is endowed with the product topology and, in case of Polish metric spaces, with the product metric $d_p((x_0,y_0),(x_1,y_1))^p = d_\mathcal X(x_0,x_1)^p + d_\mathcal Y(y_0,y_1)^p$ where $x_0,x_1 \in \mathcal X$ and $y_0, y_1 \in \mathcal Y$.
A probability $\pi \in \mathcal P(\mathcal X \times \mathcal Y)$ is called coupling with marginals $(\mu,\nu) \in \mathcal P(\mathcal X) \times \mathcal P(\mathcal Y)$ 
if $\proj^1_\# \pi = \mu$ and $\proj^2_\# \pi = \nu$ where we use $\proj^i$ to denote the $i$-th coordinate projection.
A coupling $\pi$ is said to be concentrated on the graph of a measurable function, if there exists a measurable map $T \colon \mathcal X \to \mathcal Y$ such that $(\id,T)_\# \mu = \pi$. 
Given a probability $\mu \in \mathcal P(\mathcal X)$ we write $L_p(\mu;\mathcal Y)$ for the set of $p$-integrable functions (when $p \ge 1$) resp.\ measurable functions (when $p = 0$) that take values in $\mathcal Y$.
We equip $L_p(\mu;\mathcal Y)$ with the usual $L_p$-distance whereas we understand under $L_0$-convergence simply convergence in $\mu$-probability.
The $p$-Wasserstein distance between $\mu,\nu \in \mathcal P_p(\mathcal X)$ is given by
\[
    \mathcal W_p(\mu,\nu) :=
    \begin{cases}
        \inf_{\pi \in \Pi(\mu,\nu)} \left( \int d_\mathcal X(x,y)^p \, d\pi(x,y) \right)^\frac1p & p \in [1,\infty), \\
        \inf_{\pi \in \Pi(\mu,\nu)} \int d_\mathcal X(x,y) \wedge 1 \, d\pi(x,y) & p = 0,
    \end{cases}
\]
and we recall the well-known fact that $\mathcal W_0$ metrizes the topology of weak convergence on $\mathcal P(\mathcal X)$.

Let $\mu,\nu \in \mathcal P(\prod_{k = 1}^N \mathcal X_k)$ where $(\mathcal X_k,d_{\mathcal X_k})$ are Polish metric spaces.
For $\ell, k \in \mathbb N$, $\ell, k \le N$, we will write $x_{\ell:k}$ to denote the vector $x_\ell,\ldots,x_k$ and similarly write $\mathcal X_{\ell:k}$ for the product $\mathcal X_\ell \times \dots \times \mathcal X_k$.

In the arguments below it will be important to keep track of properties of successive disintegrations of probability measures such as in \eqref{eq:kernel1}. 
To this end we denote by $$K^\mu_k \colon \mathcal X_{1:k-1} \to \mathcal{P}(\mathcal X_k)$$ 
a regular disintegration of $\proj^{1:k}_\# \mu$ w.r.t.\ the first $k-1$-coordinates and remark that $K^\mu_k$ is Borel measurable.
Further we use the convention that $K^\mu_1(x_{1:0};dx_1) = \proj^1_\# \mu(dx_1)$.
With this notation in hand we can rewrite \eqref{eq:kernel1} as
\[
    \mu(dx_{1:N}) = \bigotimes_{k = 1}^N K^\mu_k(x_{1:k-1};dx_k).
\]
 Recall that $\mu \in \mathcal{P}(\R^N)$ {triangular regular} if $\mu$-almost surely, for $k = 1,\ldots,N$, $K^\mu_k$ takes values in the set of regular measures, that is, $K^\mu_k(x_{1:k-1}) \in \{ \rho \in \mathcal P(\R) : \rho \text{ is non-atomic} \}$ for $\mu$-a.e.\ $x$. 
A coupling $\pi \in \Pi(\mu,\nu)$ is bicausal if for all $k \le N$, the disintegration satisfies for$((x_\ell,y_{\ell})_{\ell = 1}^N) \in \prod_{k = 1}^N \mathcal X_k \times \mathcal X_k$ $\pi$-a.s.\
\[
    K^{\pi}_k\left((x_\ell,y_\ell)_{\ell = 1}^{k - 1}\right) 
    \in 
    \Pi\left(K^\mu_k(x_{1:k-1}), K^\nu_{k}(y_{1:k-1} \right).
\]
The set of bi-causal transport plans in $\Pi(\mu,\nu)$ is compact, see e.g.\ \cite{BaBeLiZa17}. 
The $p$-adapted Wasserstein distance between $\mu$ and $\nu$ is given by
\begin{equation}
    \label{eq:def.AW}
    \AW_p(\mu,\nu) :=
    \begin{cases}
        \inf_{\pi \in \Pi_{\rm bc}(\mu,\nu)} \left( \int d_{\mathcal X}(x,y)^p \, d\pi(x,y) \right)^\frac1p & p \in [1,\infty), \\
        \inf_{\pi \in \Pi_{\rm bc}(\mu,\nu)} \int d_{\mathcal X}(x,y) \wedge 1 \, d\pi(x,y) & p = 0.
    \end{cases}
\end{equation}
We recall that $\AW_0$ metrizes the adapted weak topology on $\mathcal P(\mathcal X_{1:N})$, see, e.g.\ \cite{BaBaBeEd19b, Pa22}.

The gluing $\mu \dot \otimes \nu$ of measures $\mu \in \mathcal P(\mathcal A \times \mathcal B)$ and $\nu \in \mathcal P(\mathcal B \times \mathcal C)$ that share a common marginal $\proj^2 \mu = \proj^1 \nu$, is defined by
\[
    \mu K^{\nu}_1 (A \times C) = \int_{A \times \mathcal B} K^\nu_1(b;C) \, d\mu(a,b)\quad  \text{ for all measurable $A\subseteq \mathcal A, B\subseteq \mathcal B$}.
\]

\subsection{The quantile process}
The next lemma establishes the correspondence between quantile processes and probabilities on $\R^N$. 

\begin{lemma}\label{lem:Q.defs}
    Let $\mu \in \mathcal P(\mathbb R^N)$. Then we have:
    \begin{enumerate}[label = (\alph*)]
        \item The process $Q^\mu$ given by $Q^\mu_k(u_{1:k}) := Q^{K^\mu_{k - 1}(Q^\mu_{1:k-1}(u_{1:k-1}))}(u_k)$ satisfies \eqref{CanonicalIncreasingI} and \eqref{CanonicalIncreasingII}.
        \item If $Q\in L_0(\lambda^N;\mathbb R^N)$ satisfies \eqref{CanonicalIncreasingI} and \eqref{CanonicalIncreasingII}, then $Q^\mu = Q$ for $\mu = Q_\# \lambda^N$.
    \end{enumerate}
\end{lemma}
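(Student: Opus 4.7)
The plan is to handle parts (a) and (b) separately, each by induction on $k$, with the bulk of the work falling on (b), where the consistency condition \eqref{CanonicalIncreasingII} has to be iterated to force any admissible $Q$ to factor through its earlier coordinates.

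For part (a), I would verify \eqref{CanonicalIncreasingI} and \eqref{CanonicalIncreasingII} directly from the recursion $Q^\mu_k(u_{1:k})=Q^{K^\mu_k(Q^\mu_{1:k-1}(u_{1:k-1}))}(u_k)$. Property \eqref{CanonicalIncreasingI} is immediate since for fixed $u_{1:k-1}$ the map $u_k\mapsto Q^\mu_k(u_{1:k})$ is an ordinary one-dimensional quantile function, hence nondecreasing. For \eqref{CanonicalIncreasingII}, if $Q^\mu_k(u_{1:k-1},u_k)=Q^\mu_k(u_{1:k-1},u_k')$, then because $Q^\mu_j$ for $j<k$ does not depend on $u_k$, the vectors $Q^\mu_{1:k}(u_{1:k-1},u_k)$ and $Q^\mu_{1:k}(u_{1:k-1},u_k')$ coincide, so $K^\mu_{k+1}$ is evaluated at the same argument in the recursion for $Q^\mu_{k+1}$; a forward induction on $m>k$ propagates the equality to all later indices.

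For part (b), I would also induct on $k$. The base case $k=1$ is the classical a.s.\ uniqueness of the increasing rearrangement: $Q_1$ depends only on $u_1$, is nondecreasing by \eqref{CanonicalIncreasingI}, and pushes $\lambda$ to the first marginal $K^\mu_1$, so $Q_1=Q^{K^\mu_1}=Q^\mu_1$ a.s. For the inductive step, assuming $Q_{1:k-1}=Q^\mu_{1:k-1}$ $\lambda^{k-1}$-a.s., the goal is to prove that for $\lambda^{k-1}$-a.e.\ $u_{1:k-1}$ the map $u_k\mapsto Q_k(u_{1:k-1},u_k)$ is nondecreasing with pushforward $K^\mu_k(Q_{1:k-1}(u_{1:k-1}))$; a second appeal to a.s.\ uniqueness of the increasing rearrangement then yields $Q_k=Q^\mu_k$ a.s.

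The main obstacle is to show that this pushforward depends on $u_{1:k-1}$ only through $Q_{1:k-1}(u_{1:k-1})$, and this is exactly where \eqref{CanonicalIncreasingII} enters essentially. Given $u_{1:k-1}, u_{1:k-1}'$ with $Q_{1:k-1}(u_{1:k-1})=Q_{1:k-1}(u_{1:k-1}')$, I would swap coordinates one at a time, moving through the sequence of arguments $(u_1,\dots,u_{k-1}),(u_1',u_2,\dots,u_{k-1}),\dots,(u_1',\dots,u_{k-1}')$; at the $j$-th swap, the hypothesis together with the previous swaps ensures $Q_j$ takes the same value before and after the swap, and \eqref{CanonicalIncreasingII} then propagates this equality to every $Q_m$ with $m>j$. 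After $k-1$ swaps this yields $Q_k(u_{1:k-1},u_k)=Q_k(u_{1:k-1}',u_k)$ for all $u_k$, so $Q_k$ factors as $Q_k(u_{1:k})=h_k(Q_{1:k-1}(u_{1:k-1}),u_k)$ for some measurable $h_k$ nondecreasing in its last argument. Disintegrating $\mu=Q_\#\lambda^N$ along its first $k-1$ coordinates and using the inductive hypothesis then identifies $h_k(x_{1:k-1},\cdot)_\#\lambda$ with $K^\mu_k(x_{1:k-1})$ for $\proj^{1:k-1}_\#\mu$-a.e.\ $x_{1:k-1}$, closing the induction.
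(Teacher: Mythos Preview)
Your overall strategy for (b) --- direct induction on $k$, with the swapping argument to show that $Q_k$ depends on $u_{1:k-1}$ only through $Q_{1:k-1}(u_{1:k-1})$ --- is sound and in fact more self-contained than the paper's route, which instead proves that $(\id,Q)_\#\lambda^N$ is a bicausal coupling of $\lambda^N$ and $\mu$ and then cites an external uniqueness result (\cite[Proposition 5.9]{BaBeLiZa17}) to conclude $Q=Q^\mu$. Your swapping argument is correct and is essentially the combinatorial content underlying both proofs.

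However, there is one genuine gap. After the swapping argument you write ``so $Q_k$ factors as $Q_k(u_{1:k})=h_k(Q_{1:k-1}(u_{1:k-1}),u_k)$ for some \emph{measurable} $h_k$''. The swapping only gives a set-theoretic factorization; promoting this to a Borel-measurable $h_k$ (equivalently, showing that the $\sigma(U_{1:k-1})$-measurable random variable $Q_k(U_{1:k-1},\cdot)_\#\lambda$ is actually $\sigma(Q_{1:k-1}(U_{1:k-1}))$-measurable) is a nontrivial step in descriptive set theory. It can be done --- e.g.\ via the Jankow--von Neumann uniformization theorem to select a measurable right inverse of $Q_{1:k-1}$, or via Blackwell's theorem since $\sigma(Q_{1:k-1})$ is countably generated in a standard Borel space --- but it is not automatic. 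The paper singles this out as Lemma~\ref{lem:consistency.measurability} and explicitly remarks that ``we have to use some machinery of descriptive set theory in the general case''. Without this ingredient your disintegration step does not go through, because you cannot identify $\text{Law}(X_k\mid U_{1:k-1})$ with $K^\mu_k(X_{1:k-1})$ from the tower property alone. Once you fill this gap (by invoking one of the results above), your direct inductive proof is complete and arguably cleaner than the paper's detour through bicausality.
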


In the proof of Lemma \ref{lem:Q.defs} we need to derive certain measurability properties from the consistency condition \ref{CanonicalIncreasingII}.
To this end, we first establish an auxiliary lemma that provides an adequate connection.
Even though the assertion of Lemma \ref{lem:consistency.measurability} is trivial for discrete $\mu$, we have to use some machinery of descriptive set theory in the general case.

\begin{lemma}
    \label{lem:consistency.measurability}
    Let $\mu \in \mathcal P(\mathcal X)$, $f,g \colon \mathcal X \to \mathcal Y$ be measurable such that for all $x_1,x_2 \in \mathcal X$, $f(x_1) = f(x_2)$ implies $g(x_1) = g(x_2)$.
    Then there exists a Borel measurable function $h \colon \mathcal Y \to \mathcal X$ such that
    \[
        g = g\circ h \circ f \quad \mu\text{-almost surely}.
    \]
\end{lemma}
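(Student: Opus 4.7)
The plan is to realise $h$ as a Borel-measurable right inverse of $f$. Set-theoretically the hypothesis already says that $g$ is constant on each fibre $f^{-1}(\{y\})$, so if $h$ is \emph{any} section of $f$ on the image $f(\mathcal{X})$, then for every $x \in \mathcal{X}$ the two points $x$ and $h(f(x))$ lie in the same fibre $f^{-1}(\{f(x)\})$, whence $g(x) = g(h(f(x)))$ pointwise. The work therefore reduces to producing a section of $f$ that is Borel measurable on all of $\mathcal{Y}$.

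To construct such a section I would consider the Borel set
\[
    \Gamma := \{(y,x) \in \mathcal{Y} \times \mathcal{X} : f(x) = y\},
\]
whose projection onto $\mathcal{Y}$ is the analytic set $f(\mathcal{X})$. The Jankov--von Neumann uniformization theorem furnishes a universally measurable selector $h_0 \colon f(\mathcal{X}) \to \mathcal{X}$ with $f(h_0(y)) = y$ for every $y \in f(\mathcal{X})$; extending $h_0$ by a constant value on $\mathcal{Y} \setminus f(\mathcal{X})$ yields a universally measurable map on all of $\mathcal{Y}$. Since every universally measurable function agrees, outside a null set for any prescribed Borel probability, with a genuinely Borel function, there exist a Borel $h \colon \mathcal{Y} \to \mathcal{X}$ and a Borel set $N \subseteq \mathcal{Y}$ with $f_\# \mu(N) = 0$ such that $h = h_0$ on $\mathcal{Y} \setminus N$.

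It then remains only to combine the two facts: for every $x \in \mathcal{X}$, $f(h_0(f(x))) = f(x)$, so the hypothesis applied to $x$ and $h_0(f(x))$ gives $g(x) = g(h_0(f(x)))$; and for every $x \in \mathcal{X} \setminus f^{-1}(N)$, which carries full $\mu$-mass, $h(f(x)) = h_0(f(x))$. Chaining these equalities gives $g = g \circ h \circ f$ $\mu$-almost surely.

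The main obstacle, and presumably the reason for the author's comment about ``descriptive set theory'', is that $f(\mathcal{X})$ is in general only analytic, not Borel, so one cannot invoke Kuratowski--Ryll-Nardzewski directly to extract a Borel section. This forces the detour through Jankov--von Neumann, which produces only a universally measurable selector, followed by the Borel regularisation modulo an $f_\# \mu$-null set; the factorisation hypothesis is elementary but is what allows the null modification to be harmless for the identity $g = g \circ h \circ f$.
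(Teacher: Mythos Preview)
Your proposal is correct and follows essentially the same approach as the paper: both apply the Jankov--von Neumann uniformization theorem to the (reversed) graph of $f$ to obtain a universally (resp.\ analytically) measurable section $h_0$ with $f\circ h_0\circ f=f$, deduce $g\circ h_0\circ f=g$ from the fibre hypothesis, and then pass to a Borel $h$ that agrees with $h_0$ off an $f_\#\mu$-null set. Your write-up is in fact slightly more explicit about the domain $f(\mathcal X)$ and the extension step, but there is no substantive difference in strategy.
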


\begin{proof}
    Since $\mathcal X$ is Polish and $f$ is measurable, the graph of $f$ is also Borel measurable.
    By the Jankow-von Neumann uniformization theorem there is an analytically measurable function $\tilde h \colon \mathcal Y \to \mathcal X$ with $f = f \circ \tilde h \circ f$.
    Using our assumption on $g$, we get that $g = g \circ \tilde h \circ f$.
    Replacing $\tilde h$ by a Borel measurable function such that $f_\#(\mu)$-almost surely $\tilde h = h$, we have found a function with the desired property.
\end{proof}

\begin{proof}[Proof of Lemma \ref{lem:Q.defs}]
    The first assertion is obvious from the definition of $Q^\mu$.

    If $Q$ satisfies \eqref{CanonicalIncreasingI} and \eqref{CanonicalIncreasingII}, then $\pi := (\id,Q)_\# \lambda^N \in \cplbc(\lambda^N,\mu)$.
    Indeed,  let $U$ be uniformly distributed on $(0,1)^N$, $X = Q(U)$, and fix $k \in \{1,\ldots,N\}$.
    By Lemma \ref{lem:consistency.measurability} there is a measurable function $h \colon \mathbb R^k \to (0,1)^k$ such that almost surely $X_{1:k-1} = Q_{1:k-1} \circ h (X_{1:k-1})$.   
    Since $Q$ is consistent we have
    \[
        X_{k} = Q_{k}(U) = Q_{k}(h(X_{1:k-1}),U_{k}),
    \]
    whence $X_{k}$ is $(X_{1:k-1},U_{k})$-measurable.
    Since $U_{1:k-1}$ is independent of $U_{k}$, this proves that
    \[
        X_{k} \text{ is conditionally independent of }U_{1:k-1} \text{ given }X_{1:k-1}.
    \]
    On the other hand, since $X_{1:k-1}$ is $U_{1:k-1}$-measurable, we have independence of $U_{k}$ and $(U_{1:k-1},X_{1:k-1})$.
    To summarize, using these conditional independence properties we have shown that almost surely
    \begin{align}\label{eq:GudiStar}
        K_k^\pi((U_\ell, X_\ell)_{\ell = 1}^{k-1}) = Law( U_{k}, X_{k} |  (U_\ell, X_\ell)_{\ell = 1}^{k - 1 }) \in \cpl(\lambda, K^\mu_k((X_\ell)_{\ell = 1}^{k - 1 })).
    \end{align}
    We conclude that $\pi$ is bicausal.
    
    It was established in \cite[Proposition 5.9]{BaBeLiZa17} that $Q^\mu$ is the only triangular increasing map that induces a coupling in $\cplbc(\lambda^N,\mu)$, hence, $Q = Q^\mu$.
\end{proof}

As direct corollary of Lemma \ref{lem:Q.defs} we obtain Lemma \ref{lem:Qmu}.
From now on, we will denote by $Q^\mu$ the map $Q \in L_0(\lambda^N;\mathbb R^N)$ that is uniquely determined by \eqref{CanonicalIncreasingI}, \eqref{CanonicalIncreasingII}, and $Q_\# (\lambda^N) = \mu$.

\begin{proposition}
    Let $\mu \in \mathcal P(\mathbb R^N)$ and $Q \in L_0(\lambda^N;\mathbb R^N)$ with $Q_\# (\lambda^N) = \mu$.
    \begin{enumerate}[label = (\alph*)]
        \item $Q = Q^\mu$ iff $Q$ is triangular increasing and $(\id,Q)_\# \lambda^N \in \cplbc(\lambda^N,\mu)$.
    \end{enumerate}
    Additionally assume that $\mu$ is triangular regular, then
    \begin{enumerate}[label = (\alph*), resume]
        \item $Q = Q^\mu$ iff $Q$ is triangular increasing.
    \end{enumerate}
\end{proposition}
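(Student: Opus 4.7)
The plan is to read part (a) as a direct corollary of Lemma~\ref{lem:Q.defs} and its proof, and to prove part (b) by induction, using the strict monotonicity forced by triangular regularity to replace the consistency condition.

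For part (a), the ``only if'' direction is already contained in Lemma~\ref{lem:Q.defs}(a) together with the computation \eqref{eq:GudiStar}: $Q^\mu$ is triangular increasing by construction, satisfies consistency, and the proof of Lemma~\ref{lem:Q.defs} shows that a triangular increasing and consistent $Q$ gives rise to a bicausal coupling $(\id,Q)_\#\lambda^N$. For the converse, I would invoke the same uniqueness result from \cite[Proposition~5.9]{BaBeLiZa17} that is used at the end of the proof of Lemma~\ref{lem:Q.defs}: a triangular increasing map that induces a coupling in $\cplbc(\lambda^N,\mu)$ is unique, hence must coincide with $Q^\mu$.

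For part (b), only the ``if'' direction requires work. Assuming $\mu$ is triangular regular and $Q$ is triangular increasing with $Q_\#\lambda^N=\mu$, I would show $Q_{1:k}=Q^\mu_{1:k}$ $\lambda^k$-a.e.\ by induction on $k$. The base case $k=1$ is the classical fact that the increasing rearrangement pushing $\lambda$ to a non-atomic probability $\proj^1_\#\mu$ is a.e.\ unique. For the inductive step, triangular regularity entails that $Q^\mu_{1:k-1}$ is $\lambda^{k-1}$-a.e.\ injective, so by Lemma~\ref{lem:consistency.measurability} (applied to $f=Q^\mu_{1:k-1}$ and $g=\id$) there is a Borel map $h$ with $h\circ Q^\mu_{1:k-1}=\id$ a.e. Let $U\sim\lambda^N$ and $X=Q(U)$; by the induction hypothesis $X_{1:k-1}=Q^\mu_{1:k-1}(U_{1:k-1})$, and the existence of $h$ shows that $\sigma(X_{1:k-1})=\sigma(U_{1:k-1})$ modulo null sets. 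Since $X\sim\mu$, the conditional law of $X_k$ given $X_{1:k-1}$ is $K^\mu_k(X_{1:k-1})$; on the other hand, writing $X_k=Q_k(U_{1:k})$ with $U_k$ independent of $U_{1:k-1}$ and conditioning on $U_{1:k-1}$ (which equals conditioning on $X_{1:k-1}$) gives the conditional law $Q_k(U_{1:k-1},\cdot)_\#\lambda$. Hence for $\lambda^{k-1}$-a.e.\ $u_{1:k-1}$ the increasing map $Q_k(u_{1:k-1},\cdot)$ pushes $\lambda$ onto the non-atomic $K^\mu_k(Q^\mu_{1:k-1}(u_{1:k-1}))$, which is also what $Q^\mu_k(u_{1:k-1},\cdot)$ does; by uniqueness of the one-dimensional quantile function the two agree $\lambda$-a.e., and Fubini concludes $Q_k=Q^\mu_k$ $\lambda^k$-a.e.

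The main obstacle is the inductive step for (b): one needs to transfer information from $Q_{1:k-1}$ to the conditional law of $Q_k(U)$ given $X_{1:k-1}$ without any consistency assumption on $Q$. The crucial input is that triangular regularity forces strict increase of $Q^\mu$ in each coordinate, hence an a.e.\ measurable left inverse, which collapses conditioning on $U_{1:k-1}$ and on $X_{1:k-1}$ to the same operation and hands us the desired disintegration. Once this identification is in place, the remainder reduces to the classical one-dimensional uniqueness of the quantile function applied slice by slice.
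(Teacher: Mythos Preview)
Your proposal is correct and follows the same approach as the paper. Part (a) is handled identically, via \cite[Proposition~5.9]{BaBeLiZa17}, and part (b) proceeds by the same induction on $k$, reducing at each step to the one-dimensional uniqueness of the quantile function for non-atomic targets. The paper's inductive step is recorded only as ``repeatedly apply this reasoning''; your version spells out the point the paper leaves implicit, namely that the induction hypothesis $Q_{1:k-1}=Q^\mu_{1:k-1}$ together with strict monotonicity of $Q^\mu_{1:k-1}$ (forced by triangular regularity) gives $\sigma(X_{1:k-1})=\sigma(U_{1:k-1})$ up to null sets, so that $Q_k(u_{1:k-1},\cdot)_\#\lambda=K^\mu_k(Q^\mu_{1:k-1}(u_{1:k-1}))$ a.e. Invoking Lemma~\ref{lem:consistency.measurability} for the measurable inverse is a bit heavier than needed---a direct construction of the left inverse from strict coordinatewise monotonicity would do---but it is correct.
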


\begin{proof}
    The first assertion was shown in \cite[Proposition 5.9]{BaBeLiZa17}. 
    To see the second assertion,
    recall that when $\rho \in \mathcal P(\mathbb R)$ is regular, then the quantile function $Q^\rho$ is the unique increasing map that pushes $\lambda$ to $\rho$.
    When $\mu$ is triangular regular, then its disintegration $K^\mu_k$ consists $\mu$-almost surely of regular measures.
    Assume that $Q$ is triangular increasing.
    Then we have for $k = 1$ that $Q_1$ pushes $\lambda$ to $K^\mu_1$, thus,  
    $Q_1 = Q^{K^\mu_1} = Q^\mu_1$.
    Given $Q_{1:k} = Q^\mu_{1:k}$ we can repeatedly apply this reasoning in an induction to obtain $Q_{k + 1} = Q^\mu_{k + 1}$.
\end{proof}

We recall from Definition \ref{def:kr} that the Knothe--Rosenblatt coupling of two distributions $\mu,\nu \in \mathcal P(\mathbb R^N)$ is given by $\kr_{\mu,\nu} = Law(Q^\mu,Q^\nu)$.

\begin{lemma} \label{lem:KR.bicausal}
    Let $\mu, \nu \in \mathcal P(\mathbb R^N)$ and $\pi \in \cpl(\mu, \nu)$. Then $\pi$ is the Knothe--Rosenblatt coupling of $\mu$ and $\nu$ if and only if $\pi$ satisfies \eqref{eq:KRDisDef}.
    In particular, $\W_p \le \AW_p \le \KR_p$.
\end{lemma}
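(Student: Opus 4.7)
The plan is to prove the equivalence in two directions---forward by unpacking the inductive representation of $Q^\mu$, backward by a uniqueness argument on successive disintegrations---and then read off the inequality chain from bicausality of the Knothe--Rosenblatt coupling.

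For the forward direction, let $U \sim \lambda^N$ and set $(X,Y) := (Q^\mu(U), Q^\nu(U))$, so that $(X,Y) \sim \kr_{\mu,\nu}$. Using the recursion from Lemma \ref{lem:Q.defs}(a), namely $X_k = Q^{K^\mu_k(X_{1:k-1})}(U_k)$ and the analogous identity for $Y_k$, the conditional independence argument already carried out around \eqref{eq:GudiStar} (using Lemma \ref{lem:consistency.measurability} to express $X_{1:k-1}$ as a function of $U_{1:k-1}$ through a measurable inverse) shows that $U_k$ is independent of $(X_{1:k-1}, Y_{1:k-1})$. Consequently the conditional law of $(X_k, Y_k)$ given $(X_{1:k-1}, Y_{1:k-1})$ is the pushforward of $\lambda$ under $u \mapsto (Q^{K^\mu_k(X_{1:k-1})}(u), Q^{K^\nu_k(Y_{1:k-1})}(u))$, which is by definition the quantile coupling of $K^\mu_k(X_{1:k-1})$ and $K^\nu_k(Y_{1:k-1})$. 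This verifies \eqref{eq:KRDisDef}.

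For the converse, suppose $\pi \in \cpl(\mu,\nu)$ satisfies \eqref{eq:KRDisDef}. I would show by induction on $k$ that $\proj^{1:k}_\# \pi = \proj^{1:k}_\# \kr_{\mu,\nu}$. For $k = 1$, both measures coincide with the (unique) quantile coupling of $K^\mu_1$ and $K^\nu_1$. In the inductive step, the marginals on the first $k-1$ pairs agree by hypothesis; moreover the regular $k$-th disintegration kernel of either measure is $\pi$-almost surely the quantile coupling of $K^\mu_k(x_{1:k-1})$ and $K^\nu_k(y_{1:k-1})$, and the quantile coupling of two probabilities on $\mathbb R$ is uniquely determined. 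Hence the joints on the first $k$ pairs agree as well, and taking $k = N$ yields $\pi = \kr_{\mu,\nu}$.

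The chain $\W_p \le \AW_p \le \KR_p$ is now immediate. The first inequality holds because $\cplbc(\mu,\nu) \subseteq \cpl(\mu,\nu)$. For the second, the verified property \eqref{eq:KRDisDef} exhibits each disintegration kernel of $\kr_{\mu,\nu}$ as a quantile coupling---in particular, a coupling---of $K^\mu_k(x_{1:k-1})$ and $K^\nu_k(y_{1:k-1})$, so $\kr_{\mu,\nu} \in \cplbc(\mu,\nu)$ and is thus an admissible competitor in the definition of $\AW_p$. I anticipate no serious obstacle; the only care required is standard null-set bookkeeping in the recursive identification of disintegration kernels on Polish spaces, handled by the usual uniqueness theorem for regular conditional distributions.
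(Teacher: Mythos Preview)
Your proposal is correct and follows essentially the same approach as the paper: both directions rest on the fact that the successive disintegration kernels of $\kr_{\mu,\nu}$ are the (unique) one-dimensional quantile couplings, and the converse is the observation that a measure is determined by its successive disintegrations. Your version simply spells out in detail what the paper compresses into two sentences---in particular, your explicit induction on $k$ for the backward direction is exactly the content of the paper's phrase ``the disintegration uniquely determines a coupling,'' and handles the null-set alignment between $\pi$ and $\kr_{\mu,\nu}$ more transparently. One minor remark: in the forward direction you do not actually need Lemma~\ref{lem:consistency.measurability}, since for the canonical $Q^\mu$ the recursive formula $X_k = Q^{K^\mu_k(X_{1:k-1})}(U_k)$ already exhibits $(X_k,Y_k)$ as a measurable function of $(X_{1:k-1},Y_{1:k-1},U_k)$ directly.
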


\begin{proof}
    It follows from \eqref{eq:KRDisDef} and the definition of the quantile coupling on the real line that $\kr_{\mu,\nu}$ satisfies \eqref{eq:KRDisDef}.
    If $\pi$ satisfies \eqref{eq:KRDisDef}, then the disintegrations $K^\pi_k$ and $K^{\kr_{\mu,\nu}}_k$ coincide $\pi$-almost surely for $k = 1,\ldots,N$.
    As the disintegration uniquely determines a coupling, we conclude that $\pi = \kr_{\mu,\nu}$.
\end{proof}

\section{The Knothe--Rosenblatt distance induces the adapted weak topology}\label{sec:ProofMainResult}

We have seen in Lemma \ref{lem:KR.bicausal} that the $\AW_p$-topology is coarser than the $\KR_p$-topology.
The main goal of this section is to establish their equivalence on $\mathcal P_p(\mathbb R^N)$.
The proof that we give in this section, directly makes use of the concept of modulus  of continuity for measures introduced by Eder in \cite{Ed19} and quantifies how much $\AW_p$ and $\KR_p$ differ from each other.
One step in this argument (in equation \eqref{eq:direct.proof.remark} below) utilizes the 1-dimensional fact that the composition of monotone couplings is again monotone, and thus optimal for Wasserstein distances.
This property breaks down for the generalized Knothe--Rosenblatt distance $\KR_p$ on $\mathcal P_p((\R^d)^N), d \geq 2$, introduced in Subsection \ref{ssec:generalization}.
To cover also this extension, we provide a different proof in Appendix \ref{sec:compactness} that is based on a characterization of compact sets.

Before diving into the proof, we introduce the \emph{modulus of continuity} of a measure $\mu \in \mathcal P_p(\mathcal A \times \mathcal B)$ by
\begin{equation}
    \label{eq:def.moc}
    \omega_\mu(\delta) :=
    \sup \left\{ \mathbb E[ d_{\mathcal B}(Y,\tilde Y)^p ]^\frac1p : (X,Y), (\tilde X, \tilde Y) \sim \mu, \, \mathbb E[d_{\mathcal A}(X,\tilde X)^p]^\frac1p \le \delta \right\} \quad \text{for }\delta \ge 0.
\end{equation}
Formally, this definition depends on $p$ but we suppress this dependency.

It is clear from the definition  that $\omega_\mu$ is increasing and it is also straightforward to establish  that $\omega_\mu(0)=0 $ if and only if $\mu$ is concentrated on the graph of a measurable function $f \colon \mathcal A \to \mathcal B$. A strengthening of this, which is crucial for our purposes, was shown in \cite[Lemma 2.7]{Ed19}:
There is a measurable function $f \colon \mathcal A \to \mathcal B$  such that $\mu$ is supported on the graph of $f$ if and only if 
\begin{align}\label{eq:mod_cont}
    \lim_{\delta\to 0} \omega_\mu(\delta)=0.
\end{align}

\begin{proof}[Proof of Theorem \ref{thm:Main}]
    Let $\mu,\nu \in \mathcal P_p(\mathbb R^N)$.
    We give the proof for $p \ge 1$ first and show that
    \begin{equation}
        \label{eq:direct.proof.0}
        \AW_p(\mu,\nu) \le \KR_p(\mu,\nu) \le \sum_{k = 1}^N f_\mu^k\left( \AW_p(\mu,\nu) \right),
    \end{equation}
    where $f_\mu^k\colon \mathbb R^+ \to \mathbb R^+$ is a continuous function that vanishes at 0.
    The first inequality follows directly from Lemma \ref{lem:KR.bicausal}.
    For $\delta \ge 0$ and $k \in \{2 ,\ldots,N\}$, we set
    \begin{align}\label{eq:SpecialCaseModulus}
        \omega^k_\mu(\delta) := \omega_{ (\id, K^\mu_k)_\# ( \proj^{1:k-1}_\# \mu)}(\delta).
    \end{align}
    To see the second inequality, we fix $k$ and consider the random variables $X = Q^\mu(U)$, $Y = Q^\nu(U)$ where $U \sim \lambda^N$.
    Pick $\tilde X$ such that $(\tilde X, Y) \sim \pi \in \Pi_{\rm bc}(\mu,\nu)$ and $\mathbb E[\| \tilde X_{1:k} - Y_{1:k} \|^p] = \AW_p(\mu_{1:k},\nu_{1:k})^p$.
    Then we have
    \begin{align} \label{eq:direct.proof.remark}
        \mathbb E\left[ |X_k - Y_k|^p \right]^\frac1p
        &=
        \mathbb E\left[ \mathcal W_p^p \left( K_k^\mu(X_{1:k-1}), K_k^\nu(Y_{1:k-1}) \right) \right]^\frac1p
        \\ \label{eq:direct.proof.remark0}
        &\le
        \mathbb E\left[ \mathcal W_p^p \left( K_k^\mu(X_{1:k-1}), K_k^\mu(\tilde X_{1:k-1}) \right) \right]^\frac1p
            +
        \mathbb E\left[ \mathcal W_p^p \left( K_k^\mu(\tilde X_{1:k-1}), K_k^\nu(Y_{1:k-1}) \right) \right]^\frac1p
        \\ \nonumber
        &\le
        \omega_\mu^k\left( \mathbb E\left[ | X_{1:k-1} - \tilde X_{1:k-1}|^p_p \right]^\frac1p \right) + \mathbb E\left[ |\tilde X_k - Y_k|^p \right]^\frac1p
        \\ \label{eq:direct.proof.1}
        &\le
        \omega_\mu^k\left( \mathbb E\left[ | X_{1:k-1} - Y_{1:k-1}|^p_p \right]^\frac1p + \AW_p(\mu,\nu) \right) + \AW_p(\mu,\nu),
    \end{align}
    where we used in \eqref{eq:direct.proof.remark} the fact that the monotone rearrangement is $\mathcal W_p$-optimal for $p \ge 1$, followed by the Minkowski's inequality in \eqref{eq:direct.proof.remark0}, then the definition of $\omega_\mu^k$ combined with optimality of $\pi$, and finally in \eqref{eq:direct.proof.1} the basic observation that $\AW_p(\mu_{1:k},\nu_{1:k}) \le \AW_p(\mu,\nu)$ as well as the Minkowski's inequality.
    We define inductively for $k = 2,\ldots,N$ and $\delta \ge 0$
    \begin{equation}
        \label{eq:direct.proof.2}
        f_\mu^1(\delta) := \delta \text{ and }
        f_\mu^k(\delta) := \omega_\mu^k\left(  \delta  + \sum_{\ell = 1}^{k-1} f_\mu^{\ell}(\delta)\right) + \delta.
    \end{equation}
    It follows from \cite[Lemma 2.7]{Ed19} that $(f_\mu^k)_{k = 1}^N$ is a family of continuous functions that vanish at $0$.
    Substituting with \eqref{eq:direct.proof.2} in \eqref{eq:direct.proof.1} leads us to
    \begin{equation}
        \label{eq:direct.proof.3}
        \mathbb E[|X_k-Y_k|^p]^\frac1p \le
        f_\mu^k(\AW_p(\mu,\nu)).
    \end{equation}
    Employing first  sub-additivity of $x\mapsto x^{1/p}$ and then \eqref{eq:direct.proof.3} we find    
    \begin{align*}
        \KR_p(\mu,\nu) 
        = 
        \mathbb E\left [|X - Y|_p^p \right]^\frac1p 
        \le 
        \sum_{k=1}^N \mathbb E\left[ |X_k - Y_k|^p \right]^\frac1p\le
        \sum_{k = 1}^N  f_\mu^k\left( \AW_p(\mu,\nu) \right),
    \end{align*}
    which yields \eqref{eq:direct.proof.0}.

    Let $(\mu^n)_{n \in \mathbb N}$ be a sequence in $\mathcal P_p(\mathbb R^N)$ and $\mu \in \mathcal P_p(\mathbb R^N)$.
    Hence, we derive from \eqref{eq:direct.proof.0} that $\mu^n \to \mu$ in $\AW_p$ if and only if $\mu^n \to \mu$ in $\KR_p$, since $f^k_\mu$ vanishes at $0$.
    This concludes the proof for $p \ge 1$.
    
    When $p = 0$, the equality in \eqref{eq:direct.proof.remark} may fail.
    Again, let $(\mu^n)_{n \in \mathbb N}$ be a sequence in $\mathcal P(\mathbb R^N)$ with $\AW_0$-limit $\mu \in \mathcal P(\mathbb R^N)$.
    Write $\pi^n \in \cplbc(\mu^n,\mu)$ for an $\AW_0$-optimal coupling.
    Since $\tanh \colon \R \to (0,1)$ is a continuous bijection, we have that $T(x) := (\tanh(x_1),\ldots,\tanh(x_N))$ is an adapted bijection with adapted inverse.
    By \cite[Lemma 3.16]{BePaSc22} we have that couplings induced by adapted bijections with adapted inverse are bicausal.
    This means that $(\id,T)_\# \mu \in \cplbc(\mu,T_\#\mu)$ and $(\id,T)_\# \mu^n \in \cplbc(\mu,T_\#\mu)$, from which follows that $T_\# \pi^n \in \cplbc(T_\# \mu^n, T_\# \mu)$ (as $T_\# \pi^n$ can be written as a gluing of bicausal coupling).
    Moreover, we have
    \[
        \lim_{n \to \infty} \int |T(x) - T(y)|_1 \, d\pi^n(x,y) \le \lim_{n \to \infty} 2 N \int |x - y| \wedge 1 \, d\pi^n(x,y) = 0,
    \]
    since $|\tanh(\hat x) - \tanh(\hat y)| \le |\hat x-\hat y|\wedge 2$ for $\hat x,\hat y \in \R$.
    This shows that $T_\# \mu^n \to T_\# \mu$ in $\AW_1$, thus, also in $\KR_1$.
    In terms of quantile processes, we have $T\circ Q^{\mu^n} \to T \circ Q^\mu$ in $\lambda^N$-probability, whence, $Q^{\mu^n} \to Q^\mu$ in $\lambda^N$-probability.
    We have shown that $\mu^n \to \mu$ in $\KR_0$.
    Since $\AW_0 \le \KR_0$, the reverse direction is trivial, which completes the proof.
\end{proof}


Let $p\geq 1$ and write $\P_{p,L}(\R^N)$ for the set of $\mu\in \P_{p,L}(\R^N)$ which are
\emph{triangular $L$-Lipschitz} in the sense that 
\[ K^\mu_k \colon \mathcal \R^{k-1} \to \mathcal{P}_p(\R) \]
is $L$-Lipschitz w.r.t.\ $p$-Wasserstein distance on $\mathcal{P}_p(\R)$. 
\begin{theorem}\label{thm:EquivalentMetrics}
    Let $p\geq 1$ and $L>0$. On $ \P_{p,L}(\R^N)$ the adapted Wasserstein distance and Knothe--Rosenblatt distance are equivalent. That is, there exists a constant $C>0$  such that for all $\mu, \nu \in \P_{p,L}(\R^N)$ 
    \[ \AW_p(\mu, \nu)\leq \KR_p(\mu, \nu) \leq C \cdot \AW_p(\mu, \nu). \]
\end{theorem}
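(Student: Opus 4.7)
The plan is to revisit the chain of inequalities developed in the proof of Theorem~\ref{thm:Main} and observe that the triangular $L$-Lipschitz hypothesis forces all bounding functions there to become linear, with slopes depending only on $L$ and $N$. The lower bound $\AW_p(\mu,\nu)\le \KR_p(\mu,\nu)$ is already Lemma~\ref{lem:KR.bicausal}, so only the upper bound requires work.

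The central step is to exploit the Lipschitz hypothesis at the level of the auxiliary moduli
\[ \omega^k_\mu(\delta) = \omega_{(\id,K^\mu_k)_\#\left( \proj^{1:k-1}_\# \mu\right)}(\delta) \]
appearing in \eqref{eq:SpecialCaseModulus}. Since the underlying measure is supported on the graph of $K^\mu_k$, any two pairs $(X,Y),(\tilde X,\tilde Y)$ with that law satisfy $Y=K^\mu_k(X)$ and $\tilde Y=K^\mu_k(\tilde X)$ almost surely; the triangular $L$-Lipschitz assumption then gives $\mathcal W_p(Y,\tilde Y)\le L\,|X-\tilde X|$ pointwise, so taking $L_p(\mathbb P)$-norms yields
\[ \omega^k_\mu(\delta)\le L\,\delta \quad \text{for all }\delta\ge 0,\ k=2,\ldots,N. \]

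From here I would plug this linear bound into the recursion \eqref{eq:direct.proof.2} that defines $f^k_\mu$ and show by induction on $k$ that $f^k_\mu(\delta)\le C_k\,\delta$, where the constants $C_k$ depend only on $L$ and $k$: starting from $C_1=1$, the recursion yields $C_k\le 1+L\bigl(1+\sum_{\ell<k}C_\ell\bigr)$. Substituting into the estimate $\KR_p(\mu,\nu)\le \sum_{k=1}^N f^k_\mu(\AW_p(\mu,\nu))$ from the proof of Theorem~\ref{thm:Main} gives
\[ \KR_p(\mu,\nu)\le \Bigl(\sum_{k=1}^N C_k\Bigr)\,\AW_p(\mu,\nu) = C\cdot \AW_p(\mu,\nu), \]
with $C$ depending only on $L$ and $N$, as required.

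The argument is a quantitative refinement of the proof of Theorem~\ref{thm:Main} rather than a new construction; the only genuinely new input is the linear control of $\omega^k_\mu$ afforded by the Lipschitz hypothesis. No substantial obstacle is expected, since once this control is in place the recursion \eqref{eq:direct.proof.2} becomes a simple affine one and the constants $C_k$ can be read off directly.
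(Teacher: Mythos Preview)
Your proposal is correct and follows exactly the paper's own argument: the paper simply observes that the triangular $L$-Lipschitz assumption gives $\omega_\mu^k(\delta)\le L\delta$ and then refers back to \eqref{eq:direct.proof.2} and \eqref{eq:direct.proof.1}. Your write-up is in fact more detailed than the paper's, which leaves the recursion for the constants $C_k$ implicit.
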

\begin{proof}
    From the definition of the modulus of continuity it is clear that (using the notation $ \omega_\mu^k$ from \eqref{eq:SpecialCaseModulus} ) \[ \omega_\mu^k(\delta) \leq L \delta. \]
    Then claim then follows in view of \eqref{eq:direct.proof.2} and \eqref{eq:direct.proof.1}.
\end{proof}

The statement in \cite[Theorem 2.2]{BoKoMe05} asserts that when a sequence of absolutely continuous measures $\mu_n\in \mathcal P(\R^N), n \in \N$, converges to  $\mu$ in total variation, then the corresponding quantile processes $Q^{\mu_n} \to Q^\mu$ in probability.
As a consequence of Theorem \ref{thm:Main} we obtain the following sharp result:

\begin{corollary}\label{cor:Bogachev}
    Let $\mu_n, \mu \in \mathcal P(\mathbb R^N)$, $n \in \N$.
    Then $\mu_n \to \mu$ in $\AW_p$ to $\mu$ if and only if $Q^{\mu_n} \to Q^\mu$ in $L_p(\lambda^N;\mathbb R^N)$.
    In particular, this is the case when $\mu_n \to \mu$ in total variation and $\int |x|_p^p \, d\mu_n(x) \to \int |x|_p^p \, d\mu(x)$.
\end{corollary}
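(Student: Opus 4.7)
The argument splits naturally into the two assertions of the corollary.

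For the first equivalence, I would observe that the isometric embedding \eqref{eq:embedding} gives $\|Q^{\mu_n}-Q^\mu\|_p = \KR_p(\mu_n,\mu)$, so $L_p$-convergence of the quantile processes is literally the same as convergence in $\KR_p$. By \Cref{thm:Main}, $\KR_p$ metrizes the $p$-adapted weak topology on $\P_p(\R^N)$; by the remark following \Cref{thm:Main}, so does $\AW_p$ (since $\AW_p$-convergence coincides with adapted weak convergence plus convergence of $p$-th moments, i.e.\ with the $p$-adapted weak topology). Hence $\mu_n\to\mu$ in $\AW_p$ if and only if $Q^{\mu_n}\to Q^\mu$ in $L_p(\lambda^N;\R^N)$. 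This is essentially a repackaging of \Cref{cor:Skorokhod}.

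For the \emph{in particular} clause I would combine the result \cite[Theorem 2.2]{BoKoMe05} recalled above the corollary with an $L_p$-upgrade. That theorem asserts that $\mu_n\to\mu$ in total variation implies $Q^{\mu_n}\to Q^\mu$ in $\lambda^N$-probability. The moment hypothesis $\int|x|_p^p\,d\mu_n(x) \to \int|x|_p^p\,d\mu(x)$ translates, via $Q^\mu_\#\lambda^N=\mu$, into the $L_p$-norm convergence $\|Q^{\mu_n}\|_p \to \|Q^\mu\|_p$. It is then a standard fact (e.g.\ via Vitali's convergence theorem, or equivalently the Brezis--Lieb lemma) that convergence in probability together with convergence of $p$-norms implies convergence in $L_p$. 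Applying the first equivalence then yields $\mu_n\to\mu$ in $\AW_p$.

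The only non-routine ingredient is the imported result from \cite{BoKoMe05}; the remaining steps are the isometry \eqref{eq:embedding} and the standard upgrade from convergence in measure plus norm convergence to $L_p$-convergence, so I do not anticipate a significant obstacle. A minor sanity check is that the $p=0$ case fits the same scheme: $L_0$-convergence is by definition convergence in $\lambda^N$-probability, the moment hypothesis is vacuous, and the conclusion reduces to the BoKoMe result combined with the first equivalence.
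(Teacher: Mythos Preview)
Your treatment of the first equivalence is fine and matches the paper's one-line appeal to \Cref{thm:Main} together with the isometry \eqref{eq:embedding}.

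For the \emph{in particular} clause there is a genuine gap. The result you import from \cite{BoKoMe05} is stated in the paper (just above the corollary) only for \emph{absolutely continuous} measures $\mu_n$, and this hypothesis is really present in \cite[Theorem~2.2]{BoKoMe05}. The corollary, however, makes no such assumption; indeed the paper frames it as a sharpening of that result. So invoking \cite{BoKoMe05} directly does not cover the full statement.

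The paper's argument avoids this by taking a different route that does not pass through \cite{BoKoMe05} at all: it uses \cite[Lemma~3.5]{EcPa22} to show that total variation convergence already implies convergence in the \emph{adapted} variation $AV$, which in turn dominates $\AW_0$. Applying the first equivalence with $p=0$ then gives $Q^{\mu_n}\to Q^\mu$ in $\lambda^N$-probability, after which your $L_p$-upgrade via moment convergence goes through exactly as you describe. In other words, the key step you are outsourcing to \cite{BoKoMe05} is instead derived internally from the corollary's own first assertion (at $p=0$) plus the $TV\Rightarrow AV$ comparison; this is what buys the removal of the absolute-continuity assumption.
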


\begin{proof}
    The first assertion follows directly from Theorem \ref{thm:Main}.
    To see the second assertion, note that by \cite[Lemma 3.5]{EcPa22} 
    \[
        TV(\mu,\mu_n) \le AV(\mu,\mu_n) = \inf_{\pi \in \cplbc(\mu,\mu_n) } \pi(\{(x,y) \in \R^N \times \R^N : x \neq y \}) \le (2^{N-1} - 1) TV(\mu,\mu_n),
    \]
    i.e., convergence in total variation $TV$ already entails convergence w.r.t.\ the so-called adapted variation $AV$. 
    In turn, the adapted variation distance dominates the $\AW_0$-metric, which yields convergence of $\mu_n \to \mu$ w.r.t.\ $\AW_0$.
    Thus, we have by the first part that $Q^{\mu_n} \to Q^\mu$ in $\lambda^N$-probability.
    Hence, by convergence of their $p$-th moments, we find that $Q^{\mu_n} \to Q^\mu$ in $L_p(\lambda^N;\mathbb R^N)$.
\end{proof}

\section{Consequences of the isometry with $L_p(\lambda^N; \R^N)$}\label{sec:IsometryConsequences}
In this section we collect consequences of the isometry \[
        \iota \colon \mathcal P_p(\mathbb R^N) \to L_p(\lambda^N;\mathbb R^N) \colon \mu \mapsto Q^\mu
    \] (which we already mentioned in \eqref{eq:embedding}). It represents a very useful tool for the investigation of the Knothe--Rosenblatt distance as it allows to transfer properties from (convex subsets of) $L_p(\lambda^N;\mathbb R^N)$ to $\mathcal P_p(\mathbb R^N)$. As a first example, through the isometry $\iota$ it is evident that $\KR_p$ satisfies the triangular inequality, a fact that is otherwise rather tedious to prove.

\subsection{Triangular monotonicity}

Towards a better understanding of the metric completion of $\mathcal P_p(\mathbb R^N)$ equipped with $\KR_p$, we study the notion of triangular monotonicity of adapted functions $Q \colon (0,1)^N \to \mathbb R^N$.
Recall that $Q$ is called \KRmonotone\ if there exists a $\lambda^N$-full set $\Gamma \subseteq (0,1)^N$ such that for all $u,v \in \Gamma$ and $k \in \mathbb N$, $k \le N$, we have
\begin{equation}
    \label{eq:KRmonotone}
    u_{1:k-1} = v_{1:k-1} \text{ and }u_k \le v_k \implies Q_k(u_{1:k}) \le Q_k(v_{1:k}).
\end{equation}
If \eqref{eq:KRmonotone} remains additionally true when both $\le$ are replaced by $<$, then $Q$ is called strictly \KRmonotone.
Further, recall from the introduction that a measure $\mu \in \mathcal P(\mathbb R^N)$ is called \KRregular\ if for $\mu$-a.e.\ $x$ and $k \in \mathbb N$, $k \le N$,
\[
    K^\mu_{k}(x_1,\ldots,x_{k - 1}) \text{ is regular (i.e., is atom free)}.
\]
In the following we establish that the set $L_p^{\rm SIT}(\lambda^N;\R^N)$  of strictly \KRmonotone\ functions is closely related to the set of \KRregular\ probability measures.

\begin{proposition} \label{prop:Lp.monotone}
    The following statements hold:
    \begin{enumerate}
        \item The set of increasing triangular transformations $L_p^{\rm IT}(\lambda^N;\R^N)$ is a closed subset of $L_p(\lambda^N;\mathbb R^N)$;
        \item The set of strictly increasing triangular transformations $L_p^{\rm SIT}(\lambda^N;\R^N)$ is dense in $L_p^{\rm IT}(\lambda^N;\R^N)$.
    \end{enumerate}
\end{proposition}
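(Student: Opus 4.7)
The plan is to handle the two statements separately, with the density statement being essentially immediate.

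For closedness in (1), I decompose membership in $L_p^{\rm IT}(\lambda^N;\R^N)$ into two requirements on each component $Q_k$: \emph{adaptedness} (dependence on $u_1,\ldots,u_k$ only) and \emph{monotonicity} in the last variable, i.e., (inc). Adaptedness is equivalent to $Q_k$ being $\mathcal F_k$-measurable, where $\mathcal F_k := \sigma(\proj^{1:k})$; the subspace $L_p(\mathcal F_k) \subseteq L_p(\lambda^N;\R)$ is closed under $L_p$-convergence (for $p\ge 1$ it is the image of the conditional expectation projection; for $p=0$ one argues via subsequences and a.e.\ limits), so this property transfers to the limit. For (inc), given $Q^n \to Q$ in $L_p$, I extract a subsequence converging $\lambda^N$-a.e. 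Fubini then yields that for $\lambda^{k-1}$-a.e.\ $(u_1,\ldots,u_{k-1})$ the $u_k$-slices $Q_k^n(u_1,\ldots,u_{k-1},\cdot)$ converge in $L^p((0,1),du_k)$ (and a.e.\ along a further subsequence) to the corresponding slice of $Q_k$; since a pointwise a.e.\ limit of nondecreasing real functions coincides with a nondecreasing function, each slice of $Q_k$ admits a nondecreasing representative. To upgrade this to a jointly measurable, pointwise nondecreasing representative on $(0,1)^k$, I would take the right-continuous envelope of $Q_k$ in the variable $u_k$ (obtained via Lebesgue differentiation), which is Borel in $(u_1,\ldots,u_k)$ and pointwise nondecreasing in $u_k$.

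For the density statement (2), I define a reference map $R \in L_\infty(\lambda^N;\R^N)$ by $R_k(u_1,\ldots,u_k) := u_k$. Then $R$ is triangular and each $R_k$ is strictly increasing in $u_k$. For any $Q \in L_p^{\rm IT}(\lambda^N;\R^N)$ and $\varepsilon > 0$, the perturbation $Q^\varepsilon := Q + \varepsilon R$ is triangular (sum of triangular maps) and strictly increasing (sum of increasing and strictly increasing in the last variable), hence $Q^\varepsilon \in L_p^{\rm SIT}(\lambda^N;\R^N)$. Since $R$ is bounded, $\|Q^\varepsilon - Q\|_p = \varepsilon \|R\|_p \to 0$ as $\varepsilon \to 0$ for $p \ge 1$, and analogously $\|Q^\varepsilon - Q\|_0 \to 0$ for $p = 0$, which establishes density.

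The only technical subtlety is the construction, in (1), of a jointly measurable representative realising the slice-wise monotonicity pointwise; this is a routine consequence of Fubini combined with the closedness of the convex cone of nondecreasing functions in $L^p((0,1))$. Part (2) requires no heavy machinery.
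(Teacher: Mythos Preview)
Your proof is correct and follows essentially the same approach as the paper: for (2) you use exactly the perturbation $Q + \varepsilon\,\id$ that the paper uses (your map $R$ is the identity on $(0,1)^N$), and for (1) you spell out in detail what the paper compresses into the single assertion that being \KRmonotone\ is ``clearly'' closed under $L_p$-convergence.
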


\begin{proof}
    Clearly, the property of being \KRmonotone\ is closed under $L_p$-convergence, which shows the first claim.
    On the other hand, if $\epsilon > 0$, $Q$ is \KRmonotone, so is
    \[
        Q^\epsilon := Q + \epsilon \, \id.  
    \]
    Even more, $Q^\epsilon$ inherits from the identity map the property of being strictly \KRmonotone.
    Since $Q^\epsilon \to Q$ for $\epsilon \to 0$ in $L_p$, we deduce the second assertion.
\end{proof}

\begin{lemma} \label{lem:strictlyKRmonotone}
    Let $\mu \in \mathcal P(\mathbb R^N)$ be \KRregular.
    Then $Q^\mu \in L_p^{\rm SIT}(\lambda^N;\R^N)$ and $\{Q^\mu\} = \{ Q \in L^{\rm IT}(\lambda^N;\R^N) : Q_\# \lambda^N = \mu \}$.
\end{lemma}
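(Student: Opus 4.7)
\medskip

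\noindent\emph{Proof plan.} The strategy is to derive the strict monotonicity of $Q^\mu$ directly from the recursive formula given in Lemma~\ref{lem:Q.defs}(a), and to read off the uniqueness statement from the proposition immediately preceding the lemma.

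For the first claim, I would use triangular regularity of $\mu$ to see that, for every $k\le N$, the kernel $K^\mu_k(x_{1:k-1})$ is atom-free for $\mu$-a.e.\ $x$; pulling this back through $Q^\mu_{1:k-1}$ gives that for $\lambda^{k-1}$-a.e.\ $u_{1:k-1}$ the probability $K^\mu_k(Q^\mu_{1:k-1}(u_{1:k-1}))$ is non-atomic. Combining the identity
\[
    Q^\mu_k(u_{1:k}) = Q^{K^\mu_k(Q^\mu_{1:k-1}(u_{1:k-1}))}(u_k)
\]
with the elementary fact that the quantile function of an atom-free probability on $\R$ is strictly increasing on $(0,1)$ (a tie $Q^\rho(u_1)=Q^\rho(u_2)$ with $u_1<u_2$ would force an atom of mass at least $u_2-u_1$), it follows that $u_k\mapsto Q^\mu_k(u_{1:k-1},u_k)$ is strictly increasing on $(0,1)$ for $\lambda^{k-1}$-a.e.\ $u_{1:k-1}$ and every $k$. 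To promote this to the genuinely pointwise condition \eqref{eq:sinc}, I would redefine $Q^\mu_k$ on the offending $\lambda^{k-1}$-null set of $u_{1:k-1}$-values by inserting a fixed strictly increasing function of $u_k$; this alteration preserves both triangularity and the push-forward law, and produces a representative of $Q^\mu$ in $L^{\rm SIT}_p(\lambda^N;\R^N)$.

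For the equality of sets, $L^{\rm SIT}_p\subseteq L^{\rm IT}_p$ together with $Q^\mu_\#\lambda^N=\mu$ gives the inclusion $\{Q^\mu\}\subseteq\{Q\in L^{\rm IT}_p(\lambda^N;\R^N):Q_\#\lambda^N=\mu\}$. For the reverse inclusion, any $Q\in L^{\rm IT}_p(\lambda^N;\R^N)$ pushing $\lambda^N$ to the triangular regular measure $\mu$ is in particular triangular increasing, so part~(b) of the proposition preceding the lemma---which characterizes $Q^\mu$ among triangular increasing maps precisely under the triangular regularity hypothesis---yields $Q=Q^\mu$. The only subtlety is the standard one of reconciling the pointwise condition \eqref{eq:sinc} with a map defined only $\lambda^N$-a.s., and this is handled by the null-set modification described above; beyond that, both assertions reduce to previously established facts.
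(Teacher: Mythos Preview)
Your proposal is correct and follows essentially the same line as the paper: both derive strict monotonicity of $Q^\mu$ from the recursive formula in Lemma~\ref{lem:Q.defs}(a) together with the one-dimensional fact that an atom-free law has a strictly increasing quantile function, and both obtain uniqueness via the induction underlying part~(b) of the (unnumbered) proposition in Section~\ref{sec:preparations}. The only difference is organizational: you cite that proposition directly for uniqueness, whereas the paper's proof of the lemma redoes the inductive step explicitly (using that strict monotonicity forces $\sigma(U_{1:k-1})=\sigma(X_{1:k-1})$), making the argument self-contained at the cost of some redundancy.
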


\begin{proof}
    Assume that $Q$ is a \KRmonotone\ map with $Q_\# \lambda^N = \mu$.
    Let $U = (U_1,\ldots,U_N) \sim \lambda^N$ and $X = Q(U)$.
    Clearly, for $k = 1$ we have that $Q_1 = Q^\mu_1$.
    So, let us assume that $Q_{1:k-1} = Q^\mu_{1:k-1}$ $\lambda^N$-almost surely.
    As $Q^\mu$ is strictly \KRmonotone, this means that $\sigma(U_{1:k-1}) = \sigma(Q^\mu_{1:k-1}(U)) = \sigma(Q_{1:k-1}(U)) = \sigma(X_{1:k-1})$, whence,
    \[  
        K^\mu_{k}(X_{1:k-1}) = Law( X_k | X_{1:k-1}) = Law( X_k | U_{1:k-1}). 
    \]
    Since $Q$ is \KRmonotone\ and $\mu$ \KRregular, we find that $Q_k(U_{1:k}) = Q^{K^\mu_k(X_{1:k-1})}(U_k) = Q^\mu(U_{1:k})$ almost surely.
    We conclude that $Q = Q^\mu$ $\lambda^N$-almost surely.
\end{proof}

\begin{theorem} \label{thm:props.embedding}
    The isometry
     $
        \iota \colon \mathcal P(\mathbb R^N) \to L(\lambda^N;\mathbb R^N) \colon \mu \mapsto Q^\mu.
     $
    has the following properties:
    \begin{enumerate}[label = (\arabic*)]
        \item \label{it:thm.props.embedding.2} $\iota( \{ \mu \in \mathcal P(\mathbb R^N) : \mu \text{ is \KRregular} \} ) = L_p^{\rm SIT}(\lambda^N;\R^N)$;
        \item \label{it:thm.props.embedding.3} Via the isometry $\iota$, the metric completion of $(\mathcal P_p(\mathbb R^N), \KR_p)$ is $L_p^{\rm IT}(\lambda^N;\R^N)$;
        \item \label{it:thm.props.embedding.4} Restricted to $L_p^{\rm SIT}(\lambda^N;\R^N)$, the map $Q \mapsto Q_\# \lambda^N$ is the inverse of $\iota$.
    \end{enumerate}
\end{theorem}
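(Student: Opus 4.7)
\medskip

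\noindent\textbf{Proof sketch.} The plan is to prove the three items in the order (1), (3), (2), with (2) falling out from combining (1) with Proposition \ref{prop:Lp.monotone} and the fact that $L_p(\lambda^N;\R^N)$ is complete.

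For \ref{it:thm.props.embedding.2}, the inclusion ``$\subseteq$'' is exactly the first assertion of Lemma \ref{lem:strictlyKRmonotone}: a triangular regular $\mu$ has $Q^\mu \in L_p^{\rm SIT}$. For the reverse inclusion, fix $Q \in L_p^{\rm SIT}$ and set $\mu := Q_\#\lambda^N \in \mathcal P_p(\R^N)$. I would show that $\mu$ is triangular regular by identifying its successive kernels: let $U\sim \lambda^N$ and $X = Q(U)$, so $X_{1:k-1} = Q_{1:k-1}(U_{1:k-1})$. Strict triangular monotonicity makes $Q_{1:k-1}$ injective in each coordinate given the previous ones, so $\sigma(X_{1:k-1}) = \sigma(U_{1:k-1})$; using independence of $U_k$ from $U_{1:k-1}$ and the a.s.\ uniqueness of disintegrations, I obtain
\[
    K^\mu_k(X_{1:k-1})
    = \mathrm{Law}(Q_k(U_{1:k}) \mid U_{1:k-1})
    = Q_k(U_{1:k-1},\cdot)_\# \lambda
    \quad \text{a.s.}
\]
By \eqref{eq:sinc} the map $u_k \mapsto Q_k(U_{1:k-1},u_k)$ is strictly increasing, so its pushforward of $\lambda$ is atom free; hence $K^\mu_k(\cdot)$ is $\mu$-a.s.\ atom free and $\mu$ is triangular regular. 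Once this is established, the second clause of Lemma \ref{lem:strictlyKRmonotone} forces $Q = Q^\mu$, i.e.\ $Q \in \iota(\{\mu : \mu \text{ triangular regular}\})$.

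Item \ref{it:thm.props.embedding.4} is then immediate: given $Q \in L_p^{\rm SIT}$, the proof of \ref{it:thm.props.embedding.2} shows that $\mu := Q_\#\lambda^N$ is triangular regular with $Q = Q^\mu = \iota(\mu)$, so $\iota \circ (Q \mapsto Q_\#\lambda^N) = \mathrm{id}_{L_p^{\rm SIT}}$, and the other composition is the identity on triangular regular measures by the definition of $\iota$.

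For \ref{it:thm.props.embedding.3}, I would invoke that $\iota$ is an isometry into the complete space $L_p(\lambda^N;\R^N)$, so via $\iota$ the completion of $(\mathcal P_p(\R^N),\KR_p)$ is identified with the $\|\cdot\|_p$-closure of $L_p^{\rm quantile} = \iota(\mathcal P_p(\R^N))$. By definition $L_p^{\rm quantile} \subseteq L_p^{\rm IT}$, while item \ref{it:thm.props.embedding.2} yields $L_p^{\rm SIT} \subseteq L_p^{\rm quantile}$. Proposition \ref{prop:Lp.monotone} says $L_p^{\rm IT}$ is $L_p$-closed and $L_p^{\rm SIT}$ is $L_p$-dense in $L_p^{\rm IT}$; sandwiching gives that the closure of $L_p^{\rm quantile}$ is exactly $L_p^{\rm IT}$.

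The one genuinely nontrivial step is the conditional-law identification in \ref{it:thm.props.embedding.2}; everything else is bookkeeping using Lemma \ref{lem:strictlyKRmonotone} and Proposition \ref{prop:Lp.monotone}. The subtlety there is that the strict monotonicity hypothesis holds pointwise on $(0,1)^N$, but we need the atom-free conclusion on $K^\mu_k$ for $\mu$-a.e.\ base point, which is why the clean identification $\sigma(X_{1:k-1}) = \sigma(U_{1:k-1})$ is important.
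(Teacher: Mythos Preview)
Your proof is correct and follows the same overall structure as the paper's (items in order, (2) via sandwiching $L_p^{\rm SIT}\subseteq L_p^{\rm quantile}\subseteq L_p^{\rm IT}$ and Proposition~\ref{prop:Lp.monotone}), but the argument for the reverse inclusion in \ref{it:thm.props.embedding.2} differs genuinely.

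You establish $\mu := Q_\#\lambda^N$ is triangular regular \emph{first}, by a direct computation of the disintegration: strict triangular monotonicity makes $Q_{1:k-1}$ injective with an adapted measurable inverse, so $\sigma(X_{1:k-1})=\sigma(U_{1:k-1})$, whence $K^\mu_k(X_{1:k-1})=Q_k(U_{1:k-1},\cdot)_\#\lambda$ is atom-free; then the second clause of Lemma~\ref{lem:strictlyKRmonotone} pins down $Q=Q^\mu$. The paper instead first proves $Q=Q^\mu$ by showing $(\id,Q)_\#\lambda^N$ is bicausal (via the adapted-bijection result of \cite{BePaSc21c}) and then invoking the uniqueness from \cite[Proposition~5.8]{BaBeLiZa17}; triangular regularity of $\mu$ is implicit from $Q^\mu$ being strictly increasing. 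Your route is more elementary and self-contained---it avoids the external bicausality machinery---while the paper's route reuses characterizations already in play elsewhere in the article. Both arrive at the same place with comparable effort.
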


\begin{proof}

    \ref{it:thm.props.embedding.2}:
    On the one hand, if $\mu$ is regular, then we have by Lemma \ref{lem:strictlyKRmonotone} that $Q^\mu \in L^{\rm SIT}_p(\lambda^N;\R^N)$.
    On the other hand, if $Q \in L^{\rm SIT}_p(\lambda^N;\R^N)$, then it is straightforward to verify that $Q$ admits an adapted, almost sure right-inverse, i.e., an adapted map $R \colon \mathbb R^N \to (0,1)^N$ with $R \circ Q = \id$ $\lambda^N$-almost surely.
    Thus, the induced Monge coupling satisfies $(\id,Q)_\# \lambda^N = (R,\id)_\# \mu$ (where $\mu := Q_\# \lambda^N$).
    In \cite[Lemma 3.16]{BePaSc21c} it was shown that couplings induced by adapted bijections with adapted inverse, are bicausal, whence, $(\id,Q)_\# \lambda^N \in \cplbc(\lambda^N,\mu)$.
    But, in \cite[Proposition 5.8]{BaBeLiZa17} it was established that $Q^\mu$ is the unique element of $L^{\rm IT}_p(\lambda^N;\R^N)$ with $(\id,Q^\mu)_\# \lambda^N \in \cplbc(\lambda^N;\mu)$.
    Hence, $Q = Q^\mu$, which also proves \ref{it:thm.props.embedding.4}.
    
    \ref{it:thm.props.embedding.3}: Since for every $\mu \in \mathcal P(\mathbb R^N)$, the quantile process is \KRmonotone, the assertion is a consequence of Proposition \ref{prop:Lp.monotone}.
\end{proof}

It is easy to see that the set of increasing triangular transformations is closed, but, at least to the authors, it is not immediate whether or not $L^{quantile}_p(\lambda^N;\R^N) = \{ Q^\mu : \mu \in \mathcal P_p(\mathbb R^N) \}$ is $G_\delta$.
This question is positively answered in the subsequent corollary.
It is somewhat remarkable that by a small detour in the reasoning this question becomes much easier to answer.

\begin{corollary}
    The set $L^{\rm quantile}_p(\lambda^N;\R^N)$ is a $G_\delta$-subset of $L_p(\lambda^N;\mathbb R^N)$.
\end{corollary}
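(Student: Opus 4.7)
The plan is to invoke Alexandrov's classical theorem from descriptive set theory: a subspace $Y$ of a Polish space $X$ is completely metrizable in the subspace topology if and only if $Y$ is a $G_\delta$-subset of $X$. Since $L_p(\lambda^N;\R^N)$ is Polish, the corollary reduces to showing that $L^{\rm quantile}_p(\lambda^N;\R^N)$, equipped with the subspace topology, is itself Polish.

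For this reduction I would use the isometric embedding $\iota\colon (\P_p(\R^N),\KR_p)\to (L_p(\lambda^N;\R^N),\|\cdot\|_p)$, $\mu\mapsto Q^\mu$, whose image is by construction $L^{\rm quantile}_p(\lambda^N;\R^N)$. Because $\iota$ is an isometry, Polish-ness of $L^{\rm quantile}_p$ with the subspace topology is equivalent to Polish-ness of $(\P_p(\R^N),\KR_p)$, and by \Cref{thm:Main} the $\KR_p$-topology coincides with the $p$-adapted weak topology on $\P_p(\R^N)$. It is known that $\P_p(\R^N)$ equipped with the adapted weak topology is Polish; this can be extracted, e.g., from the abstract characterizations of the adapted weak topology in \cite{HoKe84} or from \cite{BaBaBeEd19b}. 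Concretely, one may build a complete metric realizing the adapted weak topology via the iterated prediction process of $\mu$, which embeds $\P_p(\R^N)$ as a Borel subspace of a nested Polish space of probability measures.

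Assembling the pieces, $(\P_p(\R^N),\KR_p)$ is Polish, hence so is the isometric image $L^{\rm quantile}_p$ as a subspace of the Polish space $L_p(\lambda^N;\R^N)$, and Alexandrov's theorem delivers the $G_\delta$ conclusion. The main obstacle, and the content of the ``small detour'' alluded to in the paragraph preceding the corollary, is precisely the passage through Polish-ness of the adapted weak topology: instead of attempting to describe $L^{\rm quantile}_p$ directly as an explicit countable intersection of open subsets of $L_p$ (which is awkward because the canonicalization $Q\mapsto Q^{Q_\#\lambda^N}$ is not $\|\cdot\|_p$-continuous), one transfers the question through the isometry $\iota$ and appeals to the classical characterization of $G_\delta$-subspaces of Polish spaces.
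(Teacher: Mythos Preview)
Your proposal is correct and follows essentially the same route as the paper: reduce via the isometry $\iota$ and \Cref{thm:Main} to Polish-ness of $\P_p(\R^N)$ with the adapted weak topology, then apply the classical characterization of $G_\delta$-subspaces of Polish spaces. The only difference is that the paper supplies the Polish-ness of $(\P_p(\R^N),\AW_p)$ internally (\Cref{thm:AW.Polish}) rather than citing external sources.
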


\begin{proof}
    The set $L^{\rm quantile}_p(\lambda^N;\R^N)$ is homeomorphic to $\mathcal P_p(\mathbb R^N)$ when the latter is endowed with the $p$-th Knothe--Rosenblatt topology. 
    By Theorem \ref{thm:AW.Polish}, the space $\mathcal P_p(\mathbb R^N)$ equipped with the $p$-adapted Wasserstein topology is Polish.
    Since the $p$-Knothe--Rosenblatt topology and the $p$-adapted Wasserstein topology coincide (cf.\ Theorem \ref{thm:Main}), we have  that also $\{ Q^\mu : \mu \in \mathcal P_p(\mathbb R^N) \}$ is a Polish space.
    Hence, we conclude by recalling that a subset of a Polish space endowed with the trace topology is itself Polish if and only if it is $G_\delta$, see \cite[Theorem 3.11]{Ke95}.
\end{proof}

\subsection{Geodesics}

This section is concerned with providing the missing bits to complete Corollary \ref{cor:GeoComplete}.
Based on the isometry \eqref{eq:embedding} which was shown in \Cref{thm:props.embedding}, it is sufficient to establish the following lemma: 
\begin{lemma}
    The sets $L_p^{{\rm SIT}}(\lambda^N;\R^N), L_p^{{\rm quantile}}(\lambda^N;\R^N), L_p^{{\rm IT}}(\lambda^N;\R^N) $  are convex.
\end{lemma}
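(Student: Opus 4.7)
The plan is to reduce convexity of each of the three sets to the preservation of the pointwise defining inequalities (inc), (sinc) and the consistency condition (con) under convex combinations, and to verify that positive scalar multiples are closed in the same fashion (giving the cone structure). Throughout, fix $Q, Q' $ in the set under consideration, $\lambda\in(0,1)$, and set $R:=\lambda Q+(1-\lambda)Q'$; note that $R$ remains triangular since each coordinate is a linear combination of triangular functions of the same adaptedness pattern. Closure under multiplication by a positive scalar is immediate for all three defining conditions.

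For $L_p^{\rm IT}(\lambda^N;\R^N)$ the argument is one line: the inequality $Q_k(u_1,\ldots,u_k)\le Q_k(u_1,\ldots,u_k')$ for $u_k\le u_k'$ (valid on a $\lambda^N$-full set $\Gamma_Q$, and similarly $\Gamma_{Q'}$ for $Q'$) is preserved under nonnegative linear combinations, so $R$ satisfies (inc) on $\Gamma_Q\cap\Gamma_{Q'}$, which still has full measure. The same remark with strict inequalities shows that $L_p^{\rm SIT}(\lambda^N;\R^N)$ is convex: if $u_k<u_k'$ both summands $\lambda(Q_k(\cdot,u_k')-Q_k(\cdot,u_k))$ and $(1-\lambda)(Q'_k(\cdot,u_k')-Q'_k(\cdot,u_k))$ are strictly positive, so their sum $R_k(\cdot,u_k')-R_k(\cdot,u_k)$ is strictly positive.

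For $L_p^{\rm quantile}(\lambda^N;\R^N)$ the only nontrivial point is (con). On the full-measure set $\Gamma_Q\cap\Gamma_{Q'}$, both $Q_k$ and $Q'_k$ are increasing in the $k$-th coordinate in the sense of (inc). If $u_k\le u_k'$ and $R_k(u_1,\ldots,u_k)=R_k(u_1,\ldots,u_k')$, then writing
\[
0= R_k(u_1,\ldots,u_k')-R_k(u_1,\ldots,u_k)= \lambda\bigl(Q_k(u_1,\ldots,u_k')-Q_k(u_1,\ldots,u_k)\bigr)+(1-\lambda)\bigl(Q'_k(u_1,\ldots,u_k')-Q'_k(u_1,\ldots,u_k)\bigr)
\]
forces both nonnegative summands to vanish, because $\lambda,(1-\lambda)>0$. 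Hence $Q_k$ and $Q'_k$ each take the same value at $(u_1,\ldots,u_k)$ and $(u_1,\ldots,u_k')$, so the consistency condition (con) applied separately to $Q$ and to $Q'$ yields $Q_m(u_1,\ldots,u_k,\ldots,u_m)=Q_m(u_1,\ldots,u_k',\ldots,u_m)$ and the analogous identity for $Q'_m$, whose convex combination is exactly $R_m(u_1,\ldots,u_k,\ldots,u_m)=R_m(u_1,\ldots,u_k',\ldots,u_m)$. Combined with (inc), the characterization in Lemma \ref{lem:Q.defs}(b) then identifies $R$ with the quantile process of $R_\# \lambda^N$, placing $R$ back in $L_p^{\rm quantile}(\lambda^N;\R^N)$.

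The only technical care needed is bookkeeping the null sets on which (inc) and (con) might fail for the individual representatives $Q,Q'$, which is handled by intersecting them; the essential algebraic input is the positivity of the coefficients, which is precisely what allows equality of the sum to force equality of the summands. I do not foresee any serious obstacle.
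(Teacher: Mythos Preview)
Your proof is correct and follows essentially the same route as the paper: for $L_p^{\rm IT}$ and $L_p^{\rm SIT}$ the pointwise inequalities are trivially preserved under convex combinations, and for $L_p^{\rm quantile}$ the key observation is that equality $R_k(u_{1:k})=R_k(u_{1:k-1},u_k')$ forces both nonnegative summands to vanish, after which (con) for $Q$ and $Q'$ separately yields (con) for $R$. Your extra bookkeeping of null sets and the invocation of Lemma~\ref{lem:Q.defs}(b) to close the loop are fine additions, but the core argument is the same as the paper's.
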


\begin{proof}
    The only non-trivial claim is convexity of $L_p^{{\rm quantile}}(\lambda^N;\R^N)$ which we proceed to show.
    Assume that $T,S \colon (0,1)^N \to \R^N$ satisfy \eqref{CanonicalIncreasingI} and \eqref{CanonicalIncreasingII} and let $\alpha \in (0,1)$.
    We claim that $(1 - \alpha) T + \alpha S$ also satisfy \eqref{CanonicalIncreasingI} and \eqref{CanonicalIncreasingII}.
    Clearly, $(1 - \alpha) T + \alpha S$ satisfies \eqref{CanonicalIncreasingI}, so that it remains to prove \eqref{CanonicalIncreasingII}.
    
    To this end, fix $k< N$ and $u_1,\ldots, u_N, u_k'\in (0,1)$ with $u_k < u_k'$ and assume that
    \begin{equation}
        \label{eq:convex0}
        \left( (1 - \alpha) T + \alpha S \right)_k (u_1,\ldots,u_k) 
        = 
        \left( (1 - \alpha) T + \alpha S \right)_k (u_1,\ldots,u_k').
    \end{equation}
    If either
    \begin{equation}
        \label{eq:convex1}
        T(u_1, \ldots, u_k) < T(u_1, \ldots, u_k') \quad\text{or}\quad S(u_1, \ldots, u_k) < S(u_1, \ldots, u_k'),
    \end{equation}
    then also 
    \[ ( (1- \alpha) T+ \alpha S)(u_1, \ldots, u_k) < ( (1-\alpha) T+ \alpha S)(u_1, \ldots, u_k'), \]
    which contradicts \eqref{eq:convex0}.
    Therefore, \eqref{eq:convex0} implies that
    \begin{equation}
        \label{eq:convex2}
        T(u_1, \ldots, u_k) = T(u_1, \ldots, u_k')\quad\text{and}\quad S(u_1, \ldots, u_k) = S(u_1, \ldots, u_k').
    \end{equation}
    Under \eqref{eq:convex2} consistency of $S$ and $T$ yields
    \[ ((1-\alpha) T+ \alpha S)(u_1, \ldots, u_k, \ldots, u_m) = ((1-\alpha) T+ \alpha S)(u_1, \ldots, u_k', \ldots, u_m). \]
    We have shown that \eqref{CanonicalIncreasingII} holds for $(1-\alpha) T+ \alpha S$, which concludes the proof.
\end{proof}

\subsection{On the preservation of probabilistic properties along geodesics} \label{ssec:preservative}
Assume that $p\in (1, \infty)$. Then $\KR_p$ geodesics are unique as consequence of the isometry \eqref{eq:embedding}. Every such geodesic $(\mu_t)_{t\in [0,1]}$ is given by the McCann-interpolation induced by a Knothe--Rosenblatt coupling $\pi\in \cplbc(\mu_0, \mu_1)$, i.e.\
\begin{align}\label{eq:McCannStyle} \mu_t=((1-t) \proj^1 + t \proj^2)_{\#} \pi.\end{align}
The sets of martingales and predictable processes are closed under interpolations of the form \eqref{eq:embedding} for bicausal $\pi$, see e.g.\ \cite[Section 5]{BaBePa21}.
As in the case of adapted Wasserstein geodesics, Knothe--Rosenblatt geodesics do not preserve the Markov property.

\begin{example}
    Let $\mu_0 = \frac12(\delta_{(1,1,0)} + \delta_{(0,0,1)})$ and $\mu_1= \frac12 (\delta_{(1,0,0)} + \delta_{(0,1,1)})$.
    Then we have
    \begin{align*}
        Q^\mu(u) &= \mathbbm 1_{(0,\frac12)}(u_1) (0,0,1) + \mathbbm 1_{[\frac12,1)}(u_1)(1,1,0), \\
        Q^\nu(u) &= \mathbbm 1_{(0,\frac12)}(u_1) (0,1,1) + \mathbbm 1_{[\frac12,1)}(u_1)(1,0,0).
    \end{align*}
    The quantile process $Q$ corresponding to the midpoint $\mu_{\frac12}$ of $\mu$ and $\nu$ is given by
    \[
        Q(u) = \frac12 (Q^\mu(u) + Q^\nu(u)) = \mathbbm 1_{(0,\frac12)}(u_1) \left(0, \frac12,1\right) + \mathbbm 1_{[\frac12,1)}(u_1)\left(1,\frac12,0\right).
    \]
    The law $\mu_{\frac12}$ is given by $\frac12(\delta_{(0,\frac12,1)} + \delta_{(1,\frac12,0)})$, which is clearly the law of a non-Markovian process.
\end{example}

\subsection{Comparison of the $\KR$ and the  $\AW$-completion}\label{sec:DifferentComp}

Although $\AW_p$ and $\KR_p$ are topologically equivalent, rather unsurprisingly their metric completions do not coincide. 
Recall that up to isometry, the completion of a metric space consists of all Cauchy sequences (where two Cauchy-sequences are identified if mixing them yields again a Cauchy-sequence). 

In this section we will show that the completions w.r.t.\ $\AW_p$ and $\KR_p$ are mutually non comparable by establishing the following two Propositions: 

\begin{proposition}\label{prop:compare1} Let $p\in \{0\} \cup [1,\infty)$ and $N \ge 2$.
There is an $\AW_p$-Cauchy sequence $\mu_n\in \P_p(\R^N), n\geq 1$ which contains no $\KR_p$-Cauchy sequence, i.e.\ $\inf\{\KR_p(\mu_k, \mu_n): k\neq n\} >0$. 
\end{proposition}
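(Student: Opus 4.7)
The plan is to construct a dyadic family $(\mu_k)_{k\in\N} \subset \P_p(\R^N)$ that exploits the asymmetry between $\AW_p$ and $\KR_p$: a bicausal coupling can freely reshuffle the atoms of the time-$1$ marginal before pairing them, whereas the Knothe--Rosenblatt coupling must pair atoms in the order of their first coordinates. For $N=2$ set
\[
\mu_k \;=\; \frac{1}{2^{k+1}}\sum_{j=1}^{2^{k+1}} \delta_{\bigl(j/2^{k+1},\; \mathbbm{1}\{j \text{ odd}\}\bigr)}, \qquad k\in\N,
\]
and for $N>2$ tensor with $\delta_0^{\otimes(N-2)}$. Since the $2^{k+1}$ first coordinates are distinct and every time-$2$ conditional $K^{\mu_k}_2$ is a Dirac, the quantile process reads $Q^{\mu_k}_1(u_1) = \lceil 2^{k+1}u_1\rceil/2^{k+1}$ and $Q^{\mu_k}_2(u_1,u_2)=R_k(u_1)$, where $R_k(u):=\mathbbm{1}\{\lceil 2^{k+1} u\rceil \text{ odd}\}$ is the dyadic square wave of scale $2^{-(k+1)}$.

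The $\KR_p$ lower bound reduces to comparing these square waves. For $k<l$, each of the intervals on which $R_k$ is constant contains $2^{l-k}$ sub-intervals of $R_l$ alternately labelled $0$ and $1$, so $R_l=1$ on exactly half of any such interval independently of the value of $R_k$; hence $\lambda(\{R_k \neq R_l\}) = \tfrac12$. Since $R_k, R_l\in\{0,1\}$ this yields $\|R_k-R_l\|_p^p = \tfrac12$ for every $p\ge 1$ and the same bound under the $|\cdot|\wedge 1$ cost when $p=0$. Because $\KR_p^p(\mu_k,\mu_l)$ dominates the second-coordinate integral $\int |Q^{\mu_k}_2 - Q^{\mu_l}_2|^p \, d\lambda^N$, we conclude
\[
\inf_{k\neq l}\, \KR_p(\mu_k,\mu_l) \;\ge\; 2^{-1/p} \;>\; 0,
\]
so no subsequence is $\KR_p$-Cauchy.

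To prove that $(\mu_k)$ is $\AW_p$-Cauchy I construct a bicausal coupling $\pi_{k,l}\in \cplbc(\mu_k,\mu_l)$ pairing atoms with matching second-coordinate label. Split the total mass into two halves of weight $\tfrac12$: on the label-$1$ half, transport the uniform measure on the $2^k$-point grid $\{(2i-1)/2^{k+1}\}_{i=1}^{2^k}$ to its $2^l$-point analogue via the one-dimensional $\W_p$-optimal plan, and symmetrically on the label-$0$ half. Bicausality is automatic because the time-$2$ conditionals of $\mu_k$ and $\mu_l$ are Dirac-valued, so any time-$1$ coupling lifts uniquely; matching labels makes the forced Dirac pairing cost-free. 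Each of the two half-grids converges to $\mathrm{Unif}[0,1]$ in $\W_p$ at rate $O(2^{-k})$ for every $p\ge 1$, and in $\W_0$ at the same rate, so the time-$1$ cost, and hence $\AW_p(\mu_k,\mu_l)$, tends to $0$ as $\min(k,l)\to\infty$.

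The main conceptual difficulty is selecting an example that resists the obvious traps: a simple parity-flip construction already yields an $\AW_p$-Cauchy but not $\KR_p$-Cauchy sequence, yet contains $\KR_p$-Cauchy subsequences within each parity class and therefore fails the stronger requirement $\inf_{k\neq l}\KR_p(\mu_k,\mu_l)>0$. The dyadic refinement ensures that \emph{every} pair $k\neq l$ satisfies $\lambda(\{R_k\neq R_l\}) = \tfrac12$, which is precisely what the proposition demands.
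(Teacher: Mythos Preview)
Your proof is correct and follows essentially the same idea as the paper's. The paper takes $Q^n_1(u_1)=u_1$ and $Q^n_2(u_1,u_2)=d_n(u_1)$, the $n$-th binary digit of $u_1$; your square wave $R_k$ equals $1-d_{k+1}$ almost everywhere, so the second-coordinate mechanism---Rademacher-type functions whose pairwise $L_p$-distance is exactly $2^{-1/p}$---is identical. The only difference is that you discretise the first coordinate to the grid $\{j/2^{k+1}\}$, whereas the paper keeps $Q^n_1=\id$. This forces you to do a little extra bookkeeping (matching odd/even grids and bounding their $\W_p$-distance to $\mathrm{Unif}[0,1]$) to establish $\AW_p$-Cauchyness, while in the paper's version the time-$1$ marginal is the same $\lambda$ for all $n$ and the $\AW_p$-Cauchy argument reduces to swapping digits $n$ and $m$ at cost $\le 2^{-n}+2^{-m}$. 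Both routes are valid; the paper's is marginally simpler, but the discretised grid variant has the mild pedagogical advantage of making the bicausal label-matching coupling completely explicit on a finite support.
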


Intuitively speaking, Proposition \ref{prop:compare1} asserts that the $\AW_p$-completion has a point which is not in the $\KR_p$-completion. In particular,
\[ \overline{\P_p(\R^N)}^{\AW_p} \not \subseteq \overline{\P_p(\R^N)}^{\KR_p}. \]
Rigorously this means that there is no continuous mapping from $\overline{\P_p(\R^N)}^{\KR_p}$ onto $\overline{\P_p(\R^N)}^{\AW_p}$ which is the identity on $\P_p(\R^N)$.

\begin{proposition}\label{prop:compare2} Let $p\in \{0\} \cup [1,\infty)$ and $N \ge 2$.
There is an $\AW_p$-Cauchy sequence $\mu_n\in \P_p(\R^N), n\geq 1$ which is not a $\KR_p$-Cauchy sequence but contains but contains two $\KR_p$-Cauchy sequences.      
\end{proposition}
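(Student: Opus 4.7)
The plan is to construct, for $N=2$ (higher $N$ follows by padding the extra coordinates with $\delta_0$), a sequence whose quantile processes oscillate between two distinct $L_p$-limits while remaining $\AW_p$-Cauchy. Take
\[
\mu_n := \tfrac12 \delta_{(1/n,\,1)} + \tfrac12 \delta_{(-1/n,\,-1)}, \qquad
\tilde\mu_n := \tfrac12 \delta_{(1/n,\,-1)} + \tfrac12 \delta_{(-1/n,\,1)},
\]
and set $\nu_n := \mu_n$ for $n$ odd, $\nu_n := \tilde\mu_n$ for $n$ even.

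First, I would compute the quantile processes directly: since the first marginal of both $\mu_n$ and $\tilde\mu_n$ is $\tfrac12(\delta_{-1/n}+\delta_{1/n})$ and each atom has a Dirac conditional in the second coordinate, one finds
\[
Q^{\mu_n}(u_1,u_2) = (-1/n,-1)\,\mathbbm{1}_{u_1 \le 1/2} + (1/n,1)\,\mathbbm{1}_{u_1 > 1/2},
\]
and $Q^{\tilde\mu_n}$ is obtained from $Q^{\mu_n}$ by flipping the sign of the second coordinate. These converge in $L_p$ to limits $Q^\infty, \tilde Q^\infty \in L_p^{\rm IT}(\lambda^2)$ which differ by $\pm 2$ in the second coordinate on each of $\{u_1 \le 1/2\}$ and $\{u_1 > 1/2\}$, so $\|Q^\infty - \tilde Q^\infty\|_p \ge 2^{1/p}>0$ (resp.\ $\ge 1$ for $p=0$). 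Hence the subsequences $(\nu_{2k-1})$ and $(\nu_{2k})$ are separately $\KR_p$-Cauchy with distinct limits in the completion, so $(\nu_n)$ itself is not $\KR_p$-Cauchy.

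Next, I would exhibit for every $m,n$ an explicit atomic bicausal coupling of $\nu_m$ and $\nu_n$ that matches atoms having the same second coordinate. Concretely, between $\mu_m$ and $\tilde\mu_n$ this sends $(1/m,1)\mapsto(-1/n,1)$ and $(-1/m,-1)\mapsto(1/n,-1)$; between two $\mu$'s (or two $\tilde\mu$'s) it matches sign to sign in the first coordinate. In every case the second coordinates agree on matched atoms, so cost lives only in the first coordinate and is bounded by $(1/m+1/n)^p$. Bicausality is immediate: under this matching the conditional distributions of the second coordinate on each side are matching Dirac masses, which trivially couple. Hence $\AW_p(\nu_m,\nu_n) \to 0$, and $(\nu_n)$ is $\AW_p$-Cauchy.

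The only subtlety, rather than a real obstacle, is conceptual. The prospective common limit would carry an atom in its first marginal, so the quantile process exhibits a jump whose height is stable but whose attachment to the conditional second-coordinate configuration is discontinuously sensitive to the sign of the first-coordinate perturbation. This sensitivity is detected by $\KR_p$ as a jump of size $\|Q^\infty - \tilde Q^\infty\|_p$ but absorbed by $\AW_p$ through a vanishing first-coordinate transport. The $p=0$ case requires no modification beyond replacing $|\cdot|_p^p$ by $|\cdot|\wedge 1$, which leaves both estimates intact.
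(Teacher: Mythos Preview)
Your construction is essentially identical to the paper's: the paper defines $Q^n_1(u_1)=\tfrac{1}{n}\operatorname{sgn}(u_1-\tfrac12)$ and $Q^n_2(u_1,u_2)=(-1)^n\operatorname{sgn}(u_1-\tfrac12)$, which yields exactly your measures $\mu_n$ and $\tilde\mu_n$ with the odd/even parity swapped. Your write-up is in fact more detailed than the paper's (which simply asserts the three properties are ``straightforward to see''), and apart from a harmless arithmetic slip ($\|Q^\infty-\tilde Q^\infty\|_p=2$ rather than $2^{1/p}$), the argument is correct.
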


Again on an intuitive level, Proposition \ref{prop:compare2} asserts that  there are points $p,p'$ in the $\KR_p$-completion which are identified in the $\AW_p$-completion. In particular $$\overline{\P_p(\R^N)}^{\KR_p} \not \subseteq \overline{\P_p(\R^N)}^{\AW_p} .$$ Rigorously this means that there is no continuous mapping from $\overline{\P_p(\R^N)}^{\AW_p}$ onto $\overline{\P_p(\R^N)}^{\KR_p}$ which is the identity on $\P_p(\R^N)$.

\begin{proof}[Proof of Proposition \ref{prop:compare1}]
    Without loss of generality we show the assertion for $N = 2$ and remark that the general statement (for $N \ge 2$) simply follows by adequately embedding.
    Fix $n\in \N$. For $u\in (0,1)$ write $d_n(u)$ for the $n$-th digit in the binary digit expansion of $u$.
    Define the quantile process $Q^n$ by $Q^n_1(u_1)=u_1, Q^n_2 (u_1, u_2)= d_n(u_1)$ and let $\mu_n:= Q^n_{\#}(\lambda^2).$
   
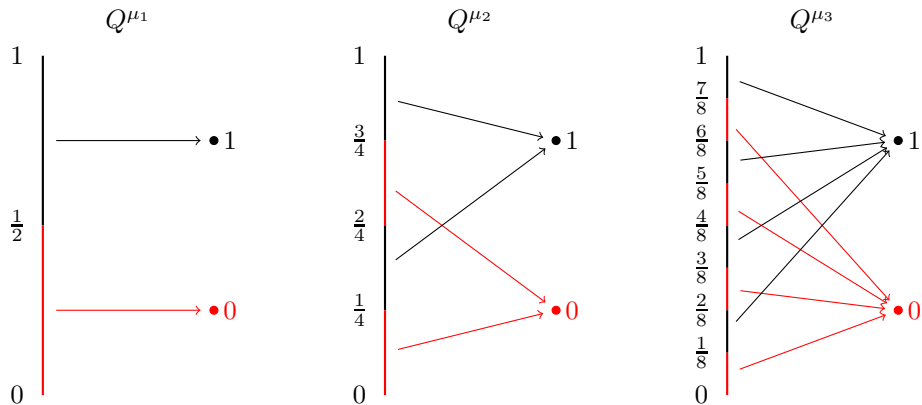
\begin{figure}[htb!]
    \centering
    
    \begin{tikzpicture}[scale=2.25]
    \node at (0.5,1.2) {$Q^{\mu_1}$};
        \filldraw[black] (1,0.5) circle (0.666pt) node[anchor=west]{1};
        \filldraw[red] (1,-0.5) circle (0.666pt) node[anchor=west]{0};

        \node at (-0.15,1) {$1$};
        \node at (-0.15,0) {$\frac{1}{2}$};
        \node at (-0.15,-1) {$0$};
        
        \draw[black,thick](0,0)--(0,1);
        \draw[red,thick](0,0)--(0,-1);
        
        \draw[black,->,shorten >=5pt,shorten <=5pt] (0,0.5)--(1,0.5);
        \draw[red,->,shorten >=5pt,shorten <=5pt] (0,-0.5)--(1,-0.5);

     \node at (0.5+2,1.2) {$Q^{\mu_2}$};
        \filldraw[black] (1+2,0.5) circle (0.666pt) node[anchor=west]{1};
        \filldraw[red] (1+2,-0.5) circle (0.666pt) node[anchor=west]{0};

        \node at (-0.15+2,1) {$1$};
        \node at (-0.15+2,0.5) {$\frac{3}{4}$};
        \node at (-0.15+2,0) {$\frac{2}{4}$};
        \node at (-0.15+2,-0.5) {$\frac{1}{4}$};
        \node at (-0.15+2,-1) {$0$};
        
        \draw[black,thick](0+2,1)--(0+2,0.5);
        \draw[red,thick](0+2,0.5)--(0+2,0);
        \draw[black,thick](0+2,0)--(0+2,-0.5);
        \draw[red,thick](0+2,-0.5)--(0+2,-1);
        
        \draw[black,->,shorten >=5pt,shorten <=5pt] (0+2,0.75)--(1+2,0.5);
        \draw[red,->,shorten >=5pt,shorten <=5pt] (0+2,0.25)--(1+2,-0.5);
        \draw[black,->,shorten >=5pt,shorten <=5pt] (0+2,-0.25)--(1+2,0.5);
        \draw[red,->,shorten >=5pt,shorten <=5pt] (0+2,-0.75)--(1+2,-0.5);x`

    \node at (0.5+4,1.2) {$Q^{\mu_3}$};
        \filldraw[black] (1+4,0.5) circle (0.666pt) node[anchor=west]{1};
        \filldraw[red] (1+4,-0.5) circle (0.666pt) node[anchor=west]{0};

        \node at (-0.15+4,1) {$1$};
        \node at (-0.15+4,0.75) {$\frac{7}{8}$};
        \node at (-0.15+4,0.5) {$\frac{6}{8}$};
        \node at (-0.15+4,0.25) {$\frac{5}{8}$};
        \node at (-0.15+4,0) {$\frac{4}{8}$};
        \node at (-0.15+4,-0.25) {$\frac{3}{8}$};
        \node at (-0.15+4,-0.5) {$\frac{2}{8}$};
        \node at (-0.15+4,-0.75) {$\frac{1}{8}$};
        \node at (-0.15+4,-1) {$0$};
        
        \draw[black,thick](0+4,1)--(0+4,0.75);
        \draw[red,thick](0+4,0.75)--(0+4,0.5);
        \draw[black,thick](0+4,0.5)--(0+4,0.25);
        \draw[red,thick](0+4,0.25)--(0+4,0);
        \draw[black,thick](0+4,0)--(0+4,-0.25);
        \draw[red,thick](0+4,-0.25)--(0+4,-0.5);
        \draw[black,thick](0+4,-0.5)--(0+4,-0.75);
        \draw[red,thick](0+4,-0.75)--(0+4,-1);
        
        \draw[black,->,shorten >=5pt,shorten <=5pt] (0+4,0.875)--(1+4,0.5);
        \draw[red,->,shorten >=5pt,shorten <=5pt] (0+4,0.625)--(1+4,-0.5);

        \draw[black,->,shorten >=5pt,shorten <=5pt] (0+4,0.375)--(1+4,0.5);
        \draw[red,->,shorten >=5pt,shorten <=5pt] (0+4,0.125)--(1+4,-0.5);

        \draw[black,->,shorten >=5pt,shorten <=5pt] (0+4,-0.125)--(1+4,0.5);        
        \draw[red,->,shorten >=5pt,shorten <=5pt] (0+4,-0.375)--(1+4,-0.5);

        \draw[black,->,shorten >=5pt,shorten <=5pt] (0+4,-0.625)--(1+4,0.5);
        \draw[red,->,shorten >=5pt,shorten <=5pt] (0+4,-0.875)--(1+4,-0.5);
    
    \end{tikzpicture}
    
    \caption{Illustration  of the quantile processes of $\mu_1,\mu_2,\mu_3$ in Proposition \ref{prop:compare1}.}
    \label{fig:precompactness}
\end{figure}

Then we have for $p\geq 1$
\[
    \KR_p(\mu^n,\mu^m) = \mathbb E[|X^n - X^m|_p^p]^\frac1p = \mathbb P[ X^n \neq X^m]^\frac1p 
    = \begin{cases}
        0 & n = m, \\
        \left( \frac12 \right)^\frac1p & n \neq m.
    \end{cases}
\]
and similarly for $p=0$. However, it is easy to see that $(\mu_n)_{n \in \N}$ constitutes an $\AW_p$-Cauchy sequence.
\end{proof}

\begin{proof}[Proof of Proposition \ref{prop:compare2}]
    Again, we can assume without loss of generality that $N = 2$.
    We define the quantile processes $Q^n(u) = (Q^1(u_1),Q^2(u_1,u_2))$ via
\[
        Q^n_1(u_1) := \frac{\text{sgn}{(u_1-1/2)}}{n}\quad\text{and}\quad
        Q^n_2(u_1,u_2) := (-1)^n \text{sgn}{(u_1-1/2)},
    \]    
    where $u = (u_1,u_2) \in (0,1)^2$.
    Since $Q^n$ satisfies \eqref{CanonicalIncreasingI} and \eqref{CanonicalIncreasingII} we have by Lemma \ref{lem:Q.defs} that $Q^n \in L_p^{\rm quantile}(\lambda^2;\R^2)$.
    We write $\mu_n = Q^n_\# (\lambda^2)$.
    It is then straightforward to see that and
    \begin{enumerate}
        \item $(\mu_n)_{n \in \N}$ constitute an $\AW_p$-Cauchy sequence;
        \item $(\mu_{2n})_{n \in \N}$ and $(\mu_{2n+1})_{n \in \N}$ are both $\KR_p$-Cauchy sequences;
        \item $(\mu_n)_{n \in \N}$ is not a $\KR_p$-Cauchy sequence.
    \end{enumerate}
    Hence, we have found a sequence with the desired properties.
\end{proof}

\appendix

\section{Knothe--Rosenblatt in multiple dimensions}

\subsection{On the weak topology and convergence in probability}
We denote by $J \colon \mathcal P(\mathcal A \times \mathcal B) \to \mathcal P(\mathcal A \times \mathcal P(\mathcal B))$ the (measurable) map satisfying 
\[
    J(\mu) = \left( (a,b) \mapsto (a,K^\mu_2(a)) \right)_\#\mu.
\]
The set of all probabilities $\mu$ concentrated on the graph of a measurable function is denoted by $\mathcal F_p(\mathcal A \leadsto \mathcal B)$.

\begin{lemma} \label{lem:graph.Polish}
    The set $\mathcal F_p(\mathcal A \leadsto \mathcal B)$ is a $G_\delta$-subset of $\mathcal P_p(\mathcal A \times \mathcal B)$ for $p\in \{0\} \cup [1,\infty)$.
\end{lemma}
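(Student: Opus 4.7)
The plan is to combine the characterization \eqref{eq:mod_cont} with upper semi-continuity of the functional $\mu \mapsto \omega_\mu(\delta)$ to express $\mathcal F_p(\mathcal A \leadsto \mathcal B)$ as a countable intersection of open sets.

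First I would show that for every fixed $\delta \ge 0$, the map $\mu \mapsto \omega_\mu(\delta)$ is upper semi-continuous on $\mathcal P_p(\mathcal A \times \mathcal B)$. Given $\mu_n \to \mu$ in $\mathcal W_p$, pass to a subsequence along which $\omega_{\mu_n}(\delta)$ converges to $c := \limsup_n \omega_{\mu_n}(\delta)$ and select near-optimal self-couplings $\gamma_n \in \Pi(\mu_n,\mu_n) \subset \mathcal P((\mathcal A\times \mathcal B)^2)$ with $\int d_{\mathcal A}(x,\tilde x)^p\,d\gamma_n \le \delta^p$ and $(\int d_{\mathcal B}(y,\tilde y)^p\,d\gamma_n)^{1/p} \ge \omega_{\mu_n}(\delta) - 1/n$. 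Tightness of $(\mu_n)$ forces tightness of $(\gamma_n)$, so along a further subsequence $\gamma_n \to \gamma$ weakly with $\gamma \in \Pi(\mu,\mu)$. Because $\mathcal W_p$-convergence of $(\mu_n)$ is equivalent to weak convergence together with convergence of $p$-th moments, the $p$-th powers of the distance are uniformly integrable under $(\gamma_n)$ (whose marginals are controlled by $\mu_n$), whence both $\int d_{\mathcal A}^p\,d\gamma_n$ and $\int d_{\mathcal B}^p\,d\gamma_n$ pass to the limit. The cluster point $\gamma$ is therefore feasible for $\omega_\mu(\delta)$ and achieves objective value $\ge c$, giving $\omega_\mu(\delta) \ge c = \limsup_n \omega_{\mu_n}(\delta)$.

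With this USC in hand, the argument concludes quickly. Since $\omega_\mu$ is non-decreasing in $\delta$, the condition $\lim_{\delta\to 0}\omega_\mu(\delta) = 0$ from \eqref{eq:mod_cont} is equivalent to: for every $k \in \mathbb N$ there exists $m \in \mathbb N$ with $\omega_\mu(1/m) < 1/k$. Hence
\[
    \mathcal F_p(\mathcal A \leadsto \mathcal B)
    = \bigcap_{k \in \mathbb N} \bigcup_{m \in \mathbb N} \left\{ \mu \in \mathcal P_p(\mathcal A \times \mathcal B) : \omega_\mu(1/m) < 1/k \right\}.
\]
By USC each strict sublevel set $\{\omega_\mu(1/m) < 1/k\}$ is open, so the inner union is open and the outer countable intersection is $G_\delta$.

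The only delicate point is the uniform-integrability argument that justifies the limit of $\int d_{\mathcal B}^p\,d\gamma_n$; this is a standard consequence of $\mathcal W_p$-convergence of the marginals but is easy to get wrong by conflating weak and $\mathcal W_p$ convergence. For $p = 0$ the proof simplifies: the cost $d \wedge 1$ is bounded and continuous, so integrals pass to the limit directly under weak convergence of $(\gamma_n)$ and no moment bookkeeping is required.
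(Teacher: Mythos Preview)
Your proof is correct and follows essentially the same route as the paper: both invoke the characterization $\mu \in \mathcal F_p(\mathcal A \leadsto \mathcal B) \iff \lim_{\delta \to 0}\omega_\mu(\delta)=0$ from \cite[Lemma~2.7]{Ed19} and then write $\mathcal F_p(\mathcal A \leadsto \mathcal B)$ as a countable intersection of the open sets $\bigcup_\delta\{\omega_\mu(\delta)<\epsilon\}$. The only difference is that the paper cites \cite[Lemma~2.9]{Ed19} for full continuity of $\mu\mapsto\omega_\mu(\delta)$, whereas you supply a direct proof of upper semi-continuity, which is weaker but already sufficient for the sublevel sets to be open.
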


\begin{proof}
    As in Section \ref{sec:ProofMainResult} we appeal to \cite[Lemma 2.7]{Ed19} which asserts that a measure $\mu \in \mathcal P_p(\mathcal A \times \mathcal B)$ is in $\mathcal F_p(\mathcal A \leadsto \mathcal B)$ if and only if its modulus of continuity  satisfies $\lim_{\delta \searrow 0} \omega_\mu(\delta) = 0$.
    Hence, we have
    \begin{align*}
        \mathcal F_p(\mathcal A \leadsto \mathcal B) &=
        \bigcap_{\epsilon > 0} \bigcup_{\delta > 0} \left\{ \mu \in \mathcal P_p(\mathcal A \times \mathcal B) : \omega_\mu(\delta) < \epsilon \right\}.
    \end{align*}
     \cite[Lemma 2.9]{Ed19} establishes that for fixed $\delta > 0$, the map $\mu \mapsto \omega_\mu(\delta)$ is continuous on $\mathcal P_p(\mathcal A \times \mathcal B)$. We thus conclude that $\mathcal F_p(\mathcal A \leadsto \mathcal B)$ is the countable intersection of open sets, that is, $G_\delta$.
\end{proof}

Recall that $\mathcal X = \mathcal X_{1:N} = \prod_{k = 1}^N \mathcal X_k$ is the product of $N$ Polish metric spaces $(\mathcal X_k, d_{\mathcal X_k})$ and that $\AW_p$ denotes the $p$-adapted Wasserstein distance as defined in \eqref{eq:def.AW}.

\begin{theorem} \label{thm:AW.Polish}
    The space $(\mathcal P_p(\mathcal X), \AW_p)$ is Polish for $p\in \{0\} \cup [1,\infty)$.
\end{theorem}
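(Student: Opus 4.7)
The plan is to argue by induction on the number of time steps $N$, reducing the $N$-step case to the classical Wasserstein distance on a suitably enlarged Polish space via the disintegration map $J$ introduced just before the theorem. The base case $N=1$ is classical: every coupling of two measures on a single factor is trivially bicausal, so $\AW_p$ coincides with $\mathcal W_p$, and $(\mathcal P_p(\mathcal X_1), \mathcal W_p)$ is known to be Polish for every $p \in \{0\}\cup [1,\infty)$. Assume the statement for step $N-1$, i.e.\ $(\mathcal P_p(\mathcal X_{2:N}), \AW_p)$ is Polish. Then $\mathcal Y := \mathcal X_1 \times \mathcal P_p(\mathcal X_{2:N})$ is Polish under the natural product metric, and therefore $(\mathcal P_p(\mathcal Y), \mathcal W_p)$ is Polish.

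The core step is to identify $(\mathcal P_p(\mathcal X_{1:N}), \AW_p)$ isometrically with a $G_\delta$-subset of $(\mathcal P_p(\mathcal Y), \mathcal W_p)$ via the map
\[
    J(\mu) = \bigl((x_1 \mapsto (x_1, K^\mu_{2:N}(x_1)))\bigr)_\# K^\mu_1.
\]
I would first check that $J$ is a bijection from $\mathcal P_p(\mathcal X_{1:N})$ onto $\mathcal F_p(\mathcal X_1 \leadsto \mathcal P_p(\mathcal X_{2:N}))$: injectivity is the uniqueness of successive disintegrations, and any $\sigma$ concentrated on the graph of a measurable kernel $F\colon \mathcal X_1 \to \mathcal P_p(\mathcal X_{2:N})$ comes from the measure $\operatorname{proj}^1_\# \sigma \otimes F$. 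Then I would verify the isometry $\AW_p(\mu,\nu) = \mathcal W_p(J(\mu), J(\nu))$. Any $\hat\pi \in \Pi(J(\mu), J(\nu))$ is forced by the graph structure of its marginals to be supported on $\{(x_1, K^\mu_{2:N}(x_1), y_1, K^\nu_{2:N}(y_1))\}$; hence $\hat\pi$ is determined by its first-coordinate marginal $\hat\pi_1 \in \Pi(K^\mu_1, K^\nu_1)$, and the $\mathcal W_p$-cost splits as $\int [d_{\mathcal X_1}(x_1,y_1)^p + \AW_p^p(K^\mu_{2:N}(x_1), K^\nu_{2:N}(y_1))]\, d\hat\pi_1$. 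On the other hand every bicausal $\pi \in \cplbc(\mu,\nu)$ disintegrates as $K^\pi_1 \in \Pi(K^\mu_1, K^\nu_1)$ together with a measurable family of bicausal couplings $K^\pi_{2:N}(x_1,y_1) \in \cplbc(K^\mu_{2:N}(x_1), K^\nu_{2:N}(y_1))$, so a measurable selection of $\AW_p$-optimal inner bicausal couplings (using compactness of $\cplbc$) gives the matching infimum in the other direction.

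With the isometry in hand, Lemma~\ref{lem:graph.Polish} applied to $\mathcal A = \mathcal X_1$ and $\mathcal B = \mathcal P_p(\mathcal X_{2:N})$ shows that $J(\mathcal P_p(\mathcal X_{1:N})) = \mathcal F_p(\mathcal X_1 \leadsto \mathcal P_p(\mathcal X_{2:N}))$ is a $G_\delta$-subset of the Polish space $(\mathcal P_p(\mathcal Y), \mathcal W_p)$, and by Alexandrov's theorem (a $G_\delta$-subset of a Polish space is itself Polish in the subspace topology) the image is Polish; transporting back through $J$ completes the induction. The main obstacle is the isometry step, specifically the measurable selection of $\AW_p$-optimal bicausal couplings between the conditional kernels $K^\mu_{2:N}(x_1)$ and $K^\nu_{2:N}(y_1)$, which needs the inductive hypothesis to ensure that $(x_1,y_1) \mapsto \AW_p(K^\mu_{2:N}(x_1), K^\nu_{2:N}(y_1))$ is jointly measurable and that the set-valued map of optimal bicausal couplings admits a measurable selector. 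For $p = 0$ the product metric on $\mathcal Y$ is understood with the customary truncation $d\wedge 1$, but no substantive change is needed in the argument.
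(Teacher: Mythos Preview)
Your proposal is correct and follows essentially the same route as the paper: induction on $N$, the disintegration map $J$ into $\mathcal P_p(\mathcal X_1 \times \mathcal P_p(\mathcal X_{2:N}))$, the identification of the range with $\mathcal F_p(\mathcal X_1 \leadsto \mathcal P_p(\mathcal X_{2:N}))$, Lemma~\ref{lem:graph.Polish} for the $G_\delta$ property, and Alexandrov's theorem to conclude. The only difference is that the paper outsources the isometry $\AW_p(\mu,\nu) = \mathcal W_p(J(\mu),J(\nu))$ to \cite[Theorem~3.10]{BaBePa21}, whereas you sketch it directly via disintegration and a measurable selection of optimal inner bicausal couplings; your identification of this selection as the main technical point is accurate. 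One small remark: the ``natural product metric'' on $\mathcal Y$ built from $d_{\mathcal X_1}$ and $\AW_p$ is itself not complete (since $\AW_p$ is not complete on $\mathcal P_p(\mathcal X_{2:N})$), so the corresponding $\mathcal W_p$ on $\mathcal P_p(\mathcal Y)$ is not complete either---the paper flags this explicitly---but this is harmless because the $G_\delta$ and Polish conclusions are purely topological, and the $\mathcal W_p$-topology on $\mathcal P_p(\mathcal Y)$ does not depend on the choice of compatible metric on the Polish space $\mathcal Y$.
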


\begin{proof}
    We show the claim by induction over $N$.
    When $N = 1$ we have that $\AW_p=\W_p$, which yields the claim.
    Next, let $N > 1$ and assume that $\mathcal Y_N := (\mathcal P_p(\mathcal X_{2:N}), \AW_p)$ is Polish.
    To clarify, in this particular instance $\AW_p$ denotes the $p$-adapted Wasserstein distance for measures on the path space $\tilde{\mathcal X} := \tilde{\mathcal X}_{1:N-1}$ where $\tilde{\mathcal X}_k = \mathcal X_{k + 1}$, $k < N$.
    Consider the map $\tilde J$ given by
    \begin{align*}
        \tilde J \colon \mathcal P_p(\mathcal X)
        &\to \mathcal P_p\left(\mathcal X_1 \times \mathcal Y_N \right), \\
        \mu &\mapsto \left( x \mapsto (x_1, \mu^{x_1}) \right)_\# \mu,
    \end{align*}
    where $(\mu^{x_1})_{x_1 \in \mathcal X_1}$ denotes the regular disintegration of $\mu$ w.r.t.\ the $\mathcal X_1$-coordinate.
    Clearly, $\tilde J$ is injective with range $\mathcal F_p(\mathcal X_1 \leadsto \mathcal Y_N)$.
    Moreover, it is not hard to see that $\tilde J$ is a topological embedding.
    Indeed,  \cite[Theorem 3.10]{BaBePa21} asserts that for $\mu,\nu \in \mathcal P_p(\mathcal X)$ we have the isometry
    \[
        \AW_p(\mu,\nu) = \widetilde{\W}_p(J(\mu),J(\nu)),
    \]
    where, for $\xi,\zeta \in \mathcal P_p(\mathcal X_1 \times \mathcal Y_N)$, we have the non-complete metric
    \[
        \widetilde{\W}_p^p(\xi,\zeta) := \inf_{\pi \in \cpl(\xi,\zeta)} \int d_{\mathcal X_1}^p(x_1,y_1) + \AW_p^p(\eta, \theta) \, \pi(d(x_1,\eta), d(y_1, \theta)).
    \]
    It is well-known that the topology induced by $\widetilde{\W}_p$ is equivalent to weak convergence and convergence of $p$-moments.
    Therefore, $J$ is an isometry from $(\mathcal P_p(\mathcal X),\AW_p)$ to $(\mathcal P_p(\mathcal X_1 \times \mathcal Y_N),\widetilde{\W}_p)$ and homeomorphic to $\mathcal F_p(\mathcal X_1 \leadsto \mathcal Y_N)$.
    The latter set is a $G_\delta$-subset of $\mathcal P_p(\mathcal X_1 \times \mathcal Y_N)$ by Lemma \ref{lem:graph.Polish}.
    Hence, $(\mathcal P_p(\mathcal X),\AW_p)$ is a Polish space.
\end{proof}

\begin{lemma} \label{lem:MeyerZheng}
    Let $\gamma \in \mathcal P(\mathcal A)$ and $p\in \{0\} \cup [1,\infty)$.
    The mapping 
    \[
        L_p(\gamma; \mathcal B) \to  \{ \mu \in F_p(\mathcal A \leadsto \mathcal B) : \proj^1_\# \mu = \gamma \} : f \mapsto (\id,f)_\# \gamma,
    \]
    is a homeomorphism.
\end{lemma}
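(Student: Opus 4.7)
The map $\Phi\colon f\mapsto (\id,f)_\#\gamma$ is well-defined: $\Phi(f)$ is concentrated on the graph of $f$, its first marginal is $\gamma$, and for $p\ge 1$ finiteness of $\int d_{\mathcal B}(f,b_0)^p\, d\gamma$ is equivalent to $\Phi(f)\in \mathcal P_p(\mathcal A\times\mathcal B)$ since $\proj^1_\#\Phi(f)=\gamma$ is fixed. It is injective (two functions inducing the same push-forward agree $\gamma$-a.s.) and surjective (every $\mu$ in the stated codomain is, by definition of $\mathcal F_p$, of the form $(\id,f)_\#\gamma$ for some $\gamma$-measurable $f$, and the $p$-integrability of $\mu$ translates into $f\in L_p(\gamma;\mathcal B)$). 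Continuity of $\Phi$ follows from the canonical coupling $\pi_n\in \Pi(\Phi(f_n),\Phi(f))$ given by pushing $\gamma$ forward under $a\mapsto ((a,f_n(a)),(a,f(a)))$; its cost for $p\ge 1$ equals $\int d_{\mathcal B}(f_n,f)^p\, d\gamma$, so $\W_p(\Phi(f_n),\Phi(f))\le \|f_n-f\|_p$. For $p=0$ the analogous estimate is replaced by dominated convergence applied along a.s.-convergent subsequences to bounded continuous test functions.

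The core of the argument is continuity of $\Phi^{-1}$. Assume $\mu_n:=\Phi(f_n)\to \mu:=\Phi(f)$ in $\W_p$ (weakly when $p=0$); the goal is $f_n\to f$ in $L_p(\gamma;\mathcal B)$. Form the triple gluing $\tilde\pi_n:=(\id,f_n,f)_\#\gamma\in\mathcal P(\mathcal A\times \mathcal B\times \mathcal B)$, so that $\proj^{1,2}_\#\tilde\pi_n=\mu_n$ and $\proj^{1,3}_\#\tilde\pi_n=\mu$. Its three one-dimensional marginals $\gamma$, $(f_n)_\#\gamma$, $f_\#\gamma$ are tight (the middle one because it is the $\mathcal B$-marginal of the convergent sequence $\mu_n$), so by Prokhorov $\{\tilde\pi_n\}_n$ is tight. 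For any weak subsequential limit $\tilde\pi_\infty$, continuity of projections gives $\proj^{1,2}_\#\tilde\pi_\infty=\mu=\proj^{1,3}_\#\tilde\pi_\infty$. Disintegrating $\tilde\pi_\infty$ over its $\mathcal A$-coordinate (which is $\gamma$), the fiber $(\tilde\pi_\infty)^a\in\mathcal P(\mathcal B\times\mathcal B)$ must have both marginals equal to $\delta_{f(a)}$, forcing $(\tilde\pi_\infty)^a=\delta_{(f(a),f(a))}$; hence $\tilde\pi_\infty=(\id,f,f)_\#\gamma$ is concentrated on the diagonal $\{b=b'\}$. By Portmanteau on the closed set $\{d_{\mathcal B}(b,b')\ge\epsilon\}$, along any subsequence $\limsup_n \gamma(\{d_{\mathcal B}(f_n,f)\ge\epsilon\}) = \limsup_n \tilde\pi_n(\{d_{\mathcal B}(b,b')\ge\epsilon\}) = 0$, so $f_n\to f$ in $\gamma$-probability. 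This already concludes the case $p=0$.

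For $p\ge 1$ one still has to upgrade convergence in probability to $L_p$-convergence: $\W_p$-convergence delivers $\int d_{\mathcal B}(f_n,b_0)^p\, d\gamma\to \int d_{\mathcal B}(f,b_0)^p\, d\gamma$, which combined with convergence in probability yields uniform $\gamma$-integrability of $d_{\mathcal B}(f_n,b_0)^p$, hence of $d_{\mathcal B}(f_n,f)^p$ via the triangle inequality; convergence in probability plus uniform integrability then gives $f_n\to f$ in $L_p$. The main obstacle is the identification of the weak limit $\tilde\pi_\infty$ as supported on the diagonal of $\mathcal B\times\mathcal B$: this is where the shared first marginal $\gamma$ plays a decisive role, since it pins both the $(1,2)$- and $(1,3)$-projections of every weak limit point simultaneously to the graph of $f$, leaving no room for mass to spread off $\{b=b'\}$.
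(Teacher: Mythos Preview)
Your argument is correct. The forward direction (continuity of $\Phi$) matches the paper exactly: both use the obvious coupling to show that $f\mapsto (\id,f)_\#\gamma$ is $1$-Lipschitz from $L_p$ to $\W_p$.

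For the reverse direction, the paper does not argue at all but simply cites \cite[Lemma~3.14]{JoPa23}. You instead give a self-contained proof: you form the triple $\tilde\pi_n=(\id,f_n,f)_\#\gamma$, use tightness of the marginals to pass to a weak limit, identify the $(1,2)$- and $(1,3)$-projections of any limit point with $\mu=(\id,f)_\#\gamma$, and deduce from the disintegration that the limit is supported on the diagonal $\{b=b'\}$. Portmanteau then yields convergence in probability, and for $p\ge 1$ you upgrade via convergence of $p$-th moments (Scheff\'e/Vitali). This is a clean and standard route; it has the advantage of being self-contained, whereas the paper's proof is a black-box citation. One small stylistic point: in the Portmanteau step you write ``along any subsequence $\limsup_n\ldots=0$''; what you mean (and what your argument delivers) is that every subsequence has a further subsequence along which the $\limsup$ vanishes, hence the full sequence converges to $0$. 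This is implicit in your description but could be stated more crisply.
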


\begin{proof}
    If $f^k \to f$ in $L_p(\gamma;\mathcal B)$ then $(\id,f^k)_\# \gamma \to (\id,f)_\# \gamma$ in $\mathcal P_p(\mathcal A \times \mathcal B)$, since $g \mapsto (\id,g)_\# \gamma$ is 1-Lipschitz from $L_p(\gamma;\mathcal B)$ to $\mathcal P_p(\mathcal A \times \mathcal B)$.
    The reverse direction was shown in \cite[Lemma 3.14]{JoPa23}.
\end{proof}

On the real line it is well-known that the map $\rho \mapsto Q^\rho$ is continuous with domain $\mathcal P(\R)$ and co-domain $L^0(0,1)$.
To cover also the multi-dimensional Knothe--Rosenblatt rearrangement given  in \Cref{ssec:generalization}, cf.\ \Cref{def:quantile_process_multi}, we a give proof of the following generalization of this fact.

\begin{corollary} \label{cor:Brenier.convergence}
    Let $p > 1$ and $S \colon \mathcal P_p(\mathbb R^d) \to L^p(\lambda^N;\mathbb R^d)$ be a selector of $\mathcal W_p$-optimal transport maps between $\lambda^d$ and $\rho$.
    Then $S$ is continuous.
\end{corollary}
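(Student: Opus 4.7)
The plan is to reduce the claim to Lemma \ref{lem:MeyerZheng} combined with the stability of optimal transport plans. Let $\rho_n \to \rho$ in $\mathcal P_p(\R^d)$, write $T_n := S(\rho_n)$ and $T := S(\rho)$, and associate to each map its Monge coupling
\[
    \pi_n := (\id,T_n)_\# \lambda^d \in \Pi(\lambda^d,\rho_n), \quad \pi := (\id,T)_\# \lambda^d \in \Pi(\lambda^d,\rho).
\]
Each $\pi_n$ is $\W_p$-optimal by the defining property of the selector. My goal is to show $\pi_n \to \pi$ in $\mathcal P_p(\R^d \times \R^d)$ and then invoke Lemma \ref{lem:MeyerZheng} with $\mathcal A = \mathcal B = \R^d$ and $\gamma = \lambda^d$ to deduce $T_n \to T$ in $L_p(\lambda^d;\R^d)$.

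For the convergence of couplings, I would first argue precompactness of $(\pi_n)_n$ in $\mathcal P_p(\R^d \times \R^d)$. The first marginal is fixed, while the second marginal converges in $\W_p$, so in particular the sequence is tight and has uniformly integrable $p$-th moments; this yields $\W_p$-precompactness of $(\pi_n)_n$. Next, I would show that any subsequential $\W_p$-limit $\pi^\ast$ is a $\W_p$-optimal coupling of $\lambda^d$ and $\rho$: the marginal constraints pass to the limit, and $\W_p$-optimality follows by the standard stability statement (lower semicontinuity of the transport cost and convergence of the optimal values, using that $\W_p(\lambda^d,\rho_n) \to \W_p(\lambda^d,\rho)$ by triangle inequality and $\W_p$-continuity).

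Since $p > 1$ and the first marginal $\lambda^d$ is absolutely continuous, the Gangbo–McCann theorem guarantees that the $\W_p$-optimal coupling between $\lambda^d$ and $\rho$ is unique and of Monge type, namely $(\id,T)_\#\lambda^d = \pi$. Therefore $\pi^\ast = \pi$, the limit is independent of the subsequence, and hence the entire sequence converges: $\pi_n \to \pi$ in $\mathcal P_p(\R^d\times\R^d)$.

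Finally, each $\pi_n$ and $\pi$ lies in $\mathcal F_p(\R^d \leadsto \R^d)$ with first marginal $\lambda^d$, so Lemma \ref{lem:MeyerZheng} (the homeomorphism $f \mapsto (\id,f)_\#\lambda^d$) converts the convergence $\pi_n \to \pi$ in $\mathcal P_p$ into the desired convergence $T_n \to T$ in $L_p(\lambda^d;\R^d)$. The main obstacle is ensuring convergence at the level of $\mathcal P_p$ rather than merely weakly — this is where one needs both the $\W_p$-convergence of the marginals and the uniqueness of the optimal coupling for $p>1$; without the strict convexity of the cost (i.e.\ for $p=1$) the argument would fail because the optimal plan need not be induced by a map.
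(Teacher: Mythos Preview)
Your proposal is correct and follows essentially the same approach as the paper: invoke Gangbo--McCann uniqueness of the $\W_p$-optimal coupling from $\lambda^d$ to $\rho$, use stability of optimal transport to get $\pi_n \to \pi$ in $\mathcal P_p$, and then apply Lemma \ref{lem:MeyerZheng} to convert this into $L_p$-convergence of the maps. The only difference is that you spell out the stability step (precompactness plus identification of subsequential limits) whereas the paper simply cites \cite[Theorem 5.20]{Vi09}.
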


\begin{proof}
    By \cite[Theorem 3.7]{GaMc96} we know that $(\id,S(\rho))_\# \lambda^d$ is the unique $\mathcal W_p$-optimal coupling from $\lambda^d$ to $\rho$.
    Hence, we deduce from stability of optimal transport, see e.g.\ \cite[Theorem 5.20]{Vi09}, that whenever $\rho^k \to \rho$ in $\mathcal P_p(\mathbb R^d)$ we also have $(\id,S(\rho^k))_\# \lambda^d \to (\id,S(\rho))_\# \lambda^d$ in $\mathcal P_p(\mathbb R^d \times \mathbb R^d)$.
    Then the assertion follows from Lemma \ref{lem:MeyerZheng}.
\end{proof}

\subsection{On compact sets in the Knothe--Rosenblatt topology}
\label{sec:compactness}

In this section we provide an alternative proof of \Cref{thm:Main} which is also applicable in the multi-dimensional setting introduced in \Cref{ssec:generalization}.
We begin by providing a characterization of relatively compact sets:

\begin{proposition} \label{prop:AW=KR} 
    Let $\mathcal K \subseteq \mathcal P_p(\mathbb R^N)$, $p\in \{0\} \cup [1,\infty)$. 
    Then the following are equivalent:
    \begin{enumerate}[label = (\arabic*)]
        \item \label{it:prop.AWKR.1} $\mathcal K$ is $\KR_p$-relatively compact;
      
        \item \label{it:prop.AWKR.5} $\mathcal K$ is $\AW_p$-relatively compact.
    \end{enumerate}
\end{proposition}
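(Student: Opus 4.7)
\textbf{Direction $(1)\Rightarrow(2)$} is immediate from $\AW_p \le \KR_p$ (\Cref{lem:KR.bicausal}), since every $\KR_p$-convergent subsequence is automatically $\AW_p$-convergent. For direction $(2)\Rightarrow(1)$, since $(\mathcal P_p(\R^N),\AW_p)$ is Polish by \Cref{thm:AW.Polish}, it suffices to show that an $\AW_p$-convergent sequence $\mu_n \to \mu$ in $\mathcal P_p(\R^N)$ also converges in $\KR_p$. Via the isometric embedding $\iota\colon\mu\mapsto Q^\mu$ of \eqref{eq:embedding}, this amounts to proving $Q^{\mu_n} \to Q^\mu$ in $L_p(\lambda^N;\R^N)$. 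The plan is to establish this by induction on $k\in\{1,\ldots,N\}$, showing $Q^{\mu_n}_{1:k}\to Q^\mu_{1:k}$ in $L_p(\lambda^N;\R^k)$.

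The base case $k=1$ is the classical equivalence between $\mathcal W_p$-convergence and $L_p$-convergence of quantile functions, which in the multi-dimensional state-space setting of \Cref{ssec:generalization} is provided by \Cref{cor:Brenier.convergence}. For the inductive step, given $Q^{\mu_n}_{1:k}\to Q^\mu_{1:k}$ in $L_p$, I would introduce the $\mathcal P_p(\R)$-valued maps
\[
    g_n(u_{1:k}) := K^{\mu_n}_{k+1}(Q^{\mu_n}_{1:k}(u_{1:k})),\qquad g(u_{1:k}) := K^{\mu}_{k+1}(Q^{\mu}_{1:k}(u_{1:k})),
\]
and first establish $g_n \to g$ in $L_p(\lambda^k;(\mathcal P_p(\R),\mathcal W_p))$. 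Since $Q^{\mu_n}_{k+1}(u_{1:k+1}) = Q^{g_n(u_{1:k})}(u_{k+1})$, continuity of $\rho\mapsto Q^\rho$ (\Cref{cor:Brenier.convergence}) then yields almost sure fibrewise $L_p(\lambda;\R)$-convergence; together with the moment identity $\|Q^{\mu_n}_{k+1}\|_p^p = \int|x_{k+1}|^p d\mu_n \to \int|x_{k+1}|^p d\mu$ (implicit in $\AW_p$-convergence) and the Brezis--Lieb lemma, this upgrades to $Q^{\mu_n}_{k+1}\to Q^\mu_{k+1}$ in $L_p(\lambda^{k+1})$.

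The technical heart is the intermediate kernel convergence $g_n\to g$. To derive it, I would first show that $\AW_p$-convergence combined with bicausality yields
\[
    J(\proj^{1:k+1}_\# \mu_n) \to J(\proj^{1:k+1}_\# \mu)\quad\text{in}\quad \mathcal W_p(\R^k \times \mathcal P_p(\R)),
\]
where $J$ is the map from the appendix (applied with $\mathcal A = \R^k$, $\mathcal B = \R$); the bound is obtained by pushing any bicausal coupling forward through the obvious lifting and dominating the cost on the $\mathcal P_p(\R)$-factor via the definition of $\mathcal W_p$. Combining this weak-type convergence with the inductive $L_p$-convergence of $Q^{\mu_n}_{1:k}$ and invoking \Cref{lem:MeyerZheng} on the product space $\R^k \times \mathcal P_p(\R)$ should transfer to the required $L_p$-convergence of the functional representatives $g_n$. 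The delicate aspect, and where I expect the main technical work, is that the first marginal $\mu^{1:k}_n$ is itself moving along the sequence, so \Cref{lem:MeyerZheng} must be applied to the joint laws rather than only to individual marginals, carefully coupling the induction hypothesis on $Q^{\mu_n}_{1:k}$ with the kernel convergence.
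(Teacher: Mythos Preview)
Your proposal is correct and follows essentially the same route as the paper's proof: both argue the nontrivial direction by induction on $k$, combining the $J$-map convergence (the paper cites \cite[Theorem 11]{Pa22}, you sketch it from bicausality) with the inductive hypothesis on $Q^{\mu_n}_{1:k}$, and then invoke \Cref{lem:MeyerZheng} against the fixed base $\lambda^k$ to upgrade joint-law convergence to $L_p$-convergence. The one ingredient you leave implicit in your ``delicate aspect'' is precisely what the paper uses to resolve it: Eder's gluing-continuity result \cite[Theorem 4.1]{Ed19}, which ensures that gluing $\gamma_n = Law(U_{1:k},X^n_{1:k})$ with the $J$-image along the common (moving) marginal $X^n_{1:k}$ passes to the limit because the limiting second factor is concentrated on a graph; your Brezis--Lieb step is a harmless variant of the paper's implicit uniform-integrability argument.
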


\begin{proof}
    By \Cref{lem:KR.bicausal} we have $\KR_p \ge \AW_p \ge \W_p$, so that \ref{it:prop.AWKR.1}$\implies$\ref{it:prop.AWKR.5}.

    To see the reverse implication, let $(\mu_n)_{n \in \N}$ be an $\AW_p$-convergent sequence with limit $\mu \in \mathcal P_p(\R^N)$.
    Write $X^n = Q^{\mu_n}(U)$, $n \in \N$, and $X = Q^\mu(U)$ where $U$ is uniformly distributed on $(0,1)^N$.
    It suffices to show that $X^n \to X$ in probability. 
    We proceed to show this claim by  induction:
    For $k = 1$ we have that $X^n_1 = Q^{K^{\mu_n}_1}(U_1) \to Q^{K^\mu_1}(U_1) = X_1$ almost surely, since $\mu_n \to \mu$ in $\W_p$ and therefore $K^{\mu_n}_1 = \proj^1_\# \mu_n \to \proj^1_\# \mu = K^\mu_1$ in $\mathcal P_p(\R)$.
    Next, assume that $X^n_{1:k-1} \to X_{1:k-1}$ in probability.
    It was established in \cite[Theorem 11]{Pa22} that the $\AW_p$-topology can be characterized as the initial topology w.r.t.\ the family of maps
    \[
        \mathcal P_p(\R^N) \to \mathcal P_p(\R^{k-1} \times \mathcal P_p(\R)) \colon \mu \mapsto (x \mapsto (x_{1:k-1}, K_k^\mu(x_{1:k-1}))),
    \]
    for $k = 1, \ldots, N$.
    Hence, $\mu^n \to \mu$ in $\AW_p$ entails convergence of
    \[
        (X^n_{1:k-1}, Law(X^n_k | X^n_{1:k-1})) \to (X_{1:k-1}, Law(X_k | X_{1:k-1})) \quad \text{in law}.
    \]
    In particular, we get by independence of $U_k$ that
    \[
        (X^n_{1:k-1},U_k,Law(X^n_k| X^n_{1:k-1}) \to (X_{1:k-1}, U_k, Law(X^n_k | X_{1:k-1})) \quad \text{in law}.
    \]
    Recall that for a sequence $(\rho_n)_{n \in \N}$ in $\mathcal P(\R)$ with $\rho_n \to \rho$ in the $p$-weak topology, we have that $Q^{\rho_n}(U_k) \to Q^{\rho}(U_k)$ w.r.t.\ $\|.\|_p$.
    Therefore, we have that
    \[
        \pi_n := Law(X^n_{1:k-1},U_k,X^n_k ) \to Law(X_{1:k-1},U_k,X_k) =: \pi,
    \]
    in $\mathcal P_p(\R^{1:k-1} \times (0,1) \times \R)$, where we used that $X^n_k = Q^{Law(X^n_k | X_{1:k-1})}(U_k)$ and $X_k = Q^{Law(X_k | X_{1:k-1})}(U_k)$.
    At the same time, we have by the inductive assumption that $\gamma_n := Law(U_{1:k-1}, X_{1:k-1}^n)$, $n \in \N$, converges to $Law(U_{1:k-1}, X_{1:k-1}) =: \gamma$ in $\mathcal P_p((0,1)^{k-1}\times\R^{k-1})$.
    Observe that $\pi \in \mathcal F_p(\R^{k-1} \times (0,1) \leadsto \R)$.
    Thus, by continuity of gluing at $(\gamma,\pi)$, see \cite[Theorem 4.1]{Ed19}, we have that
    \[
        \gamma^n \otimes K^{\pi^n}_1 \to \gamma \otimes K^{\pi}_1 \text{ in }\mathcal P_p((0,1)^{k-1} \times \mathbb R^{k-1} \times (0,1) \times \mathbb R),
    \]
    where we view $(x_{1:k-1},u_k) \in  \mathbb R^{k-1} \times (0,1)$ as the first coordinate of both, $\pi^n$ and $\pi$.
    Put differently, we have shown that in distribution
    \[
        (U_{1:k},X^n_{1:k}) \to (U_{1:k},X_{1:k}).
    \]
    Therefore, we can invoke Lemma \ref{lem:MeyerZheng} to obtain that $X^n_{1:k} \to X_{1:k}$ in probability, which concludes the induction step.
\end{proof}

\begin{proof}[Second proof of Theorem \ref{thm:Main}]
    By Proposition \ref{prop:AW=KR} we have that a set $\mathcal K \subseteq \mathcal P_p(\mathbb R^N)$ is $\AW_p$-relatively compact if and only if it is $\KR_p$-relatively compact.
    Since the topologies induced by $\KR_p$ and $\AW_p$ are both sequential as well as Hausdorff, this means that the topologies coincide.
\end{proof}

\subsection{Guide to the case $d>1$.}

In this section, we elaborate on the claims of Section \ref{ssec:generalization}.
We omit giving detailed proofs as most are analogue to the case $d=1$ with the obvious modifications.
Nevertheless, to resolve possible doubts we included a sketch of the arguments below.

\begin{proof}[Sketch of proof of Theorem \ref{thm:KRandAW}]
    Recall Definition \ref{def:quantile_process_multi} given in Section \ref{ssec:generalization}.
    Further, recall that by Corollary \ref{cor:Brenier.convergence} the unique selector $S \colon \P_p(\R^d) \to L_p(\lambda^d;\R^d)$ where $S(\rho)$ is the $\cW_p$-optimal transport map from $\lambda^d$ to $\rho$, is continuous.
    Analogous to Lemma \ref{lem:Q.defs} we introduce
    \[
        Q^\mu_k(u_{1:k}) := S^{K^\mu_k(Q^\mu_{1:k-1}(u_{1:k-1})},
    \]
    and find that $Q^\mu$ satisfies \eqref{CanonicalIncreasingIIMulti2} and \eqref{CanonicalIncreasingIMultiP}.
    Reasoning as in the proof of Lemma \ref{lem:Q.defs}, it follows that $Q \in L_p((\lambda^d)^N;(\R^d)^N)$ satisfies 
    \eqref{CanonicalIncreasingIIMulti2}, \eqref{CanonicalIncreasingIMultiP}, and $Q_\# (\lambda^d)^N = \mu$ if and only if $Q^\mu = Q$.
    We conclude that for each $\mu$ there is a unique quantile process and $\mu \mapsto Q^\mu$ is an isometry from $(\P_p((\R^d)^N),\KR_p)$ to $(L_p((\lambda^d)^N;(\R^d)^N),\|.\|_p)$.

    For $\mu,\nu \in \P_p((\R^d)^N)$ the multi-dimensional Knothe--Rosenblatt coupling $\kr_{\mu,\nu}$ is given by $(Q^\mu,Q^\nu)_\# (\lambda^d)^N$.
    Therefore, the successive disintegration of $\kr_{\mu,\nu}$ is $\kr_{\mu,\nu}$-almost surely given by
    \[
        K^{\kr_{\mu,\nu}}_k((x_\ell,y_\ell)_{\ell = 1}^{k-1}) 
        = Law
        \left( S(K^\mu_k(x_{1:k-1})(U), S(K^\nu_k(y_{1:k-1})(U)) | ((x_\ell,y_\ell)_{\ell = 1}^{k-1})\right),
    \]
    where $U$ is a uniform distribution on $(0,1)^d$.
    It follows directly from this representation that $\kr_{\mu,\nu} \in \cplbc(\mu,\nu)$ and in particular $\KR_p \ge \AW_p$.

    Finally, the proof of Proposition \ref{prop:AW=KR} carries over to the multidimensional setting thanks to Corollary \ref{cor:Brenier.convergence}.
    Then the same reasoning as in the second proof of Theorem \ref{thm:Main} applies.
\end{proof}

\begin{proof}[Sketch of proof of Theorem \ref{thm:tildeKRandAW}]
    The first proof of Theorem \ref{thm:Main} directly carries over.
    Indeed, note that \eqref{eq:direct.proof.remark} holds by definition of $\widetilde{\KR}_p$ while all other steps did not use one dimensional ingredients.
\end{proof}

\subsection{On $\widetilde \KR_p$}\label{sec:OnWideTilde}

Let $p\in [1, \infty)$ and recall the definition of $\widetilde{\KR}_p$ from \eqref{eq:KRMultiTilde}.
Theorem \ref{thm:tildeKRandAW} asserts that $\widetilde{\KR}_p(\mu_n, \mu)\to 0$ if and only if $\AW_p(\mu_n, \mu)\to 0$. This follows from the same argument we used for the case $d=1$ in Section \ref{sec:ProofMainResult}. 
However, as the next lemma shows, $\widetilde{\KR}_p$ is just a semimetric but not a metric, i.e., the triangle inequality is not satisfied in general.

\begin{lemma} \label{lem:counterexample}
    Let $d, N \in \N,$ $d,N > 1$, and $p \ge 1$.
    Then the semimetric $\widetilde{\KR_p}$ on $\mathcal{P}_p(  (\R^d)^{N})$ does not satisfy the triangle inequality.
\end{lemma}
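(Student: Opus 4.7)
The plan is to exhibit an explicit counterexample for $d = N = 2$ (the remaining cases will follow by trivial embedding). The mechanism to exploit is the \emph{non-uniqueness} of $\W_p$-optimal couplings in dimension $d \ge 2$: the supremum in the definition of $\widetilde\KR_p$ allows us to select, among $\W_p$-optimal first-step couplings, the one that is worst matched with the second-step kernels, while $\widetilde\KR_p(\mu,\nu)$ stays small whenever the first-step optimal coupling happens to be unique.

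Concretely, I would take $\mu_1 = \tfrac12(\delta_{(1,0)} + \delta_{(-1,0)})$, $\rho_1 = \tfrac12(\delta_{(0,1)} + \delta_{(0,-1)})$ and $\nu_1 = \tfrac12(\delta_{(\frac12,\frac12)} + \delta_{(-\frac12,-\frac12)})$, together with Dirac second-step kernels $\mu_2^{(1,0)} = \rho_2^{(0,1)} = \nu_2^{(\frac12,\frac12)} = \delta_0$ and $\mu_2^{(-1,0)} = \rho_2^{(0,-1)} = \nu_2^{(-\frac12,-\frac12)} = \delta_{Me_1}$ for a large $M > 0$. The key observation is that all four distances between atoms of $\mu_1$ and atoms of $\rho_1$ are equal to $\sqrt 2$, so every element of $\cpl(\mu_1,\rho_1)$ is $\W_p$-optimal; in particular the ``swap'' permutation $(1,0)\mapsto(0,-1)$, $(-1,0)\mapsto(0,1)$ can be chosen as the first-step kernel of some $\pi \in \cpl_{\text{to}}(\mu,\rho)$. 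Under this swap the two Dirac second-step kernels are mismatched, contributing cost $M^p$ per unit mass, so $\widetilde\KR_p^p(\mu,\rho) \ge 2^{p/2} + M^p$.

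In contrast, between $\mu_1$ and $\nu_1$ (and between $\nu_1$ and $\rho_1$) the two permutation couplings have edge lengths $1/\sqrt 2$ and $\sqrt{10}/2$, strictly ordered; since the extreme points of $\cpl(\mu_1,\nu_1)$ are precisely those two permutations and the $\W_p$-cost is linear on this polytope, the matching coupling is the \emph{unique} $\W_p$-optimal one. Along this unique matching the Dirac kernels align perfectly, hence $\widetilde\KR_p^p(\mu,\nu) = \widetilde\KR_p^p(\nu,\rho) = (1/\sqrt 2)^p$, giving
\[
  \widetilde\KR_p(\mu,\nu) + \widetilde\KR_p(\nu,\rho) = \sqrt 2 \; < \; \bigl(2^{p/2} + M^p\bigr)^{1/p} \le \widetilde\KR_p(\mu,\rho) \quad \text{for every } M > 0,
\]
violating the triangle inequality. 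To pass from $(d,N)=(2,2)$ to arbitrary $d \ge 2$, $N \ge 2$, I isometrically embed $\R^2$ into the first two coordinates of $\R^d$ and take all kernels at times $k \ge 3$ to be $\delta_0$. The only delicate point, and the main bookkeeping obstacle, is confirming uniqueness of the $\W_p$-optimal couplings $\mu_1 \leftrightarrow \nu_1$ and $\nu_1 \leftrightarrow \rho_1$: this reduces to checking strict inequality of the two permutation costs plus the extreme-point structure of the $2\times 2$ transportation polytope, which rules out mixtures and non-permutation optimal couplings.
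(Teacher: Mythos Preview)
Your construction is correct and the mechanism is sound: by placing $\mu_1$ and $\rho_1$ at the vertices of a square you make every first-step coupling $\W_p$-optimal, so the supremum in $\widetilde\KR_p$ lets you pick the swap that mismatches the heavy second-step Diracs, while the interpolating measure $\nu$ breaks the symmetry and forces a unique (well-matched) first-step optimizer on each leg. One cosmetic point: your displayed numbers ($\sqrt 2$, $1/\sqrt 2$, $\sqrt{10}/2$, $2^{p/2}$) are computed with the Euclidean norm, whereas the paper's $\widetilde\KR_p$ uses the cost $|x-y|_p^p = \sum_j |x_j-y_j|^p$. This does not affect the argument---all four $\mu_1$--$\rho_1$ distances are still equal (to $2^{1/p}$), and the strict ordering of the two permutation costs for $\mu_1\leftrightarrow\nu_1$ persists for every $p\ge 1$---but you should recompute the constants accordingly.

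The paper takes a genuinely different route. Its three measures are chosen so that the triangular optimal coupling is \emph{unique} in all three pairings; the violation arises because the unique $\W_p$-optimal first-step map from $\mu_1$ to $\eta_1$ happens to cross-match the second-step Diracs, while the detour through $\nu$ (which sits $\varepsilon$-close to $\eta$ in the first coordinate) avoids this crossing. Your approach exploits the supremum directly and is arguably more transparent; the paper's example is more robust, since it shows the failure is not an artifact of taking a sup over non-unique optimizers---replacing $\sup$ by $\inf$ or by any measurable selection would still break the triangle inequality in their example, whereas in yours it would not.
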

\begin{figure}
    \centering
    \begin{tikzpicture}[scale=1.50]
    
    \draw[gray, thick] (-1,0) -- (1,0);
    \draw[gray, thick] (0,-1) -- (0,1);
    \node at (0,1.2){$\proj^1_\# \mu$};
    \filldraw[black] (-0.8,0) circle (1pt) node[anchor=north]{(2,0)};
    \filldraw[black] (0.8,0) circle (1pt) node[anchor=north]{(-2,0)};
    
    \draw[gray, thick] (-1+2.5,0) -- (1+2.5,0);
    \draw[gray, thick] (0+2.5,-1) -- (0+2.5,1);
    \node at (0+2.5,1.2){$\proj^1_\# \nu$};
    \filldraw[black] (-0.1+2.5,0.8) circle (1pt) node[anchor=east]{(2,0)};
    \filldraw[black] (0.1+2.5,-0.8) circle (1pt) node[anchor=west]{(-2,0)};
    
    \draw[gray, thick] (-1+5,0) -- (1+5,0);
    \draw[gray, thick] (0+5,-1) -- (0+5,1);
    \node at (0+5,1.2){$\proj^1_\#\eta$};
    \filldraw[black] (0.1+5,0.8) circle (1pt) node[anchor=west]{(2,0)};
    \filldraw[black] (-0.1+5,-0.8) circle (1pt) node[anchor=east]{(-2,0)};
    \end{tikzpicture}
     \caption{Visualization of the support of the first $\mathbb R^2$-coodinates of the measures $\mu,\nu,\eta$ where the coordinates attached to each support point signify the second set of coordinates.}
    \label{fig:counterexample.1}
    
    \begin{tikzpicture}[scale=1.50]
    \draw[gray, thick] (-1,0-3) -- (1,0-3);
    \draw[gray, thick] (0,-1-3) -- (0,1-3);
    \node at (0,1.2-3){$T_{\mu,\nu}$};
    \filldraw[black] (-0.8,0-3) circle (1pt) node[anchor=north]{(2,0)};
    \filldraw[black] (0.8,0-3) circle (1pt) node[anchor=north]{(-2,0)};
    \filldraw[red] (-0.1,0.8-3) circle (1pt) node[anchor=east]{(2,0)};
    \filldraw[red] (0.1,-0.8-3) circle (1pt) node[anchor=west]{(-2,0)};
    \draw[blue, thick,->] (-0.8,0-3) parabola (-0.1,0.8-3);
    \draw[blue, thick,->] (+0.8,0-3) parabola (0.1,-0.8-3);

    \draw[gray, thick] (-1+2.5,0-3) -- (1+2.5,0-3);
    \draw[gray, thick] (0+2.5,-1-3) -- (0+2.5,1-3);
    \node at (0+2.5,1.2-3){$T_{\nu,\eta}$};
    \filldraw[black] (-0.1+2.5,0.8-3) circle (1pt) node[anchor=east]{(2,0)};
    \filldraw[black] (0.1+2.5,-0.8-3) circle (1pt) node[anchor=west]{(-2,0)};
    \filldraw[red] (0.1+2.5,0.8-3) circle (1pt) node[anchor=west]{(2,0)};
    \filldraw[red] (-0.1+2.5,-0.8-3) circle (1pt) node[anchor=east]{(-2,0)};
    \draw[blue, thick,->] (-0.1+2.5,0.8-3) parabola (0.1+2.5,0.8-3);
    \draw[blue, thick,->] (0.1+2.5,-0.8-3) parabola (-0.1+2.5,-0.8-3);

    \draw[gray, thick] (-1+5,0-3) -- (1+5,0-3);
    \draw[gray, thick] (0+5,-1-3) -- (0+5,1-3);
    \node at (0+5,1.2-3){$T_{\mu,\eta}$};
    \filldraw[black] (-0.8+5,0-3) circle (1pt) node[anchor=north]{(2,0)};
    \filldraw[black] (0.8+5,0-3) circle (1pt) node[anchor=north]{(-2,0)};
    \filldraw[red] (0.1+5,0.8-3) circle (1pt) node[anchor=west]{(2,0)};
    \filldraw[red] (-0.1+5,-0.8-3) circle (1pt) node[anchor=east]{(-2,0)};
    \draw[blue, thick,->] (-0.8+5,0-3) parabola (-0.1+5,-0.8-3);
    \draw[blue, thick,->] (+0.8+5,0-3) parabola (0.1+5,0.8-3);
    \end{tikzpicture}
    \caption{Illustration of the $\widetilde{\KR}_p$-optimal transport maps between $(\mu,\nu)$, $(\nu,\eta)$ and $(\mu,\eta)$. }
    \label{fig:counterexample.2}
\end{figure}

\begin{proof} 
    We prove this by providing a counterexample for $d=2$ and $N = 2$, and remark that the counterexample can be easily embedded into higher-dimensional spaces.
    Let $\epsilon > 0$, and define the probability measures $\mu,\nu,\eta \in \mathcal P_p(\mathbb R^{4})$ as
    \begin{align*}
    \mu:=&\frac{1}{2}\left(\delta_{((-1,0),(2,0))} +\delta_{((1,0),(-2,0))}\right),\\
    \nu:=&\frac{1}{2}\left(\delta_{((-\varepsilon,1),(2,0))} + \delta_{((\varepsilon,-1),(-2,0))}\right),\\
    \eta:=&\frac{1}{2}\left(\delta_{((\varepsilon,1),(2,0))} + \delta_{((-\varepsilon,-1),(-2,0))}\right).
    \end{align*}
These measures are visualised in Figures \ref{fig:counterexample.1} and \ref{fig:counterexample.2}.
Figure \ref{fig:counterexample.1} shows their support projected onto the first two coordinates. The labels of the points signify the second set of coordinates.
Between any of these measures there exists exactly one unique triangular optimal coupling, cf.\ \eqref{eq:KRDisDef.multidim}.
These couplings are of Monge-type and illustrated in Figure \ref{fig:counterexample.2}. 
We write $T_{\mu\nu}$ for the optimal map from $\mu$ to $\nu$. 
The red arrows indicate how the mass is moved.
By construction the coupling transports the first $\mathbb R^2$-coordinates optimally and thus only tries to minimize the movement of the points in the plane, ignoring the second set of $\mathbb R^2$-coordinates. 
Then the transport in the second dimension is predetermined by the transport in the plane since the conditional measures are all Diracs. 
We have
\begin{align*}
    T_{\mu\eta}&((-1,0),(2,0))=((-\varepsilon,-1),(-2,0)), \\
    T_{\mu\eta}&((1,0),(-2,0))=((\varepsilon,1),(2,0)),
\end{align*}
and get
\[
    \widetilde{\KR}_p(\mu,\eta) = (|1-\varepsilon|^p+ 1 + 4^p)^{\frac{1}{p}}\geq 4.
\]
Furthermore, we have
\begin{align*}
    T_{\mu\nu}&((-1,0),(2,0))=((-\varepsilon,1),(2,0)), \\
    T_{\mu\nu}&((1,0),(-2,0))=((\varepsilon,-1),(-2,0)),
\end{align*}
which implies that
\[
    \widetilde{\KR}_p(\mu,\nu)= (|1-\varepsilon|^p+1)^{\frac{1}{p}}.
\]
Finally, we have
\begin{align*}
    T_{\nu\eta}&((-\varepsilon,1),(2,0))=((\varepsilon,1),(2,0)), \\
    T_{\nu\eta}&((\varepsilon,-1),(-2,0))=(-\varepsilon,-1),(-2,0)),
\end{align*}  
whence,
\[ \widetilde{\KR}_p(\nu,\eta)=2\varepsilon. \]
Thus, when $\epsilon \le 1$ we find
\[
    \widetilde{\KR}_p(\mu,\nu)
    +
    \widetilde{\KR}_p(\nu,\eta)=((1-\varepsilon)^p+1)^{\frac{1}{p}}+2\varepsilon<4<\widetilde{\KR}_p(\mu,\eta),
\]
which violates the triangle inequality.
\end{proof}

\bibliographystyle{plain}
\bibliography{joint_biblio}

\end{document}